\numberwithin{equation}{section}
\newtheorem{thm}{Theorem}[section]
\newtheorem{lem}[thm]{Lemma}
\newtheorem{prop}[thm]{Proposition}
\theoremstyle{definition} 
\newtheorem{rem}[thm]{Remark}
\newtheorem{defn}[thm]{Definition}
\theoremstyle{remark}
\def\cE {\mathcal{E}}
\def\cW {\mathcal{W}}
\newcommand{\La}{\big\langle}
\newcommand{\Ra}{\big\rangle}
\newcommand{\bs}[1]{\boldsymbol{#1}}
\newcommand{\vd}{\mathrm{d}}
\definecolor{deepgreen}{cmyk}{1,0,1,0.5}
\newcommand{\E}{\mathcal{E}}
\newcommand{\LL}{\mathcal{L}}
\newcommand{\N}{\mathbb{N}}
\newcommand{\R}{\mathbb{R}}
\newcommand{\al}{\alpha}
\newcommand{\de}{\delta}
\newcommand{\lam}{\lambda}
\newcommand{\te}{\theta}
\newcommand{\ta}{\tau}
\newcommand{\io}{\iota}
\newcommand{\De}{\Delta}
\newcommand{\Lam}{\Lambda}
\newcommand{\p}{\partial}
\newcommand{\Rmnum}[1]{\expandafter\@slowromancap\romannumeral #1@}
\newcommand{\ti}{\widetilde}
\newcommand{\U}{\underline}
\newcommand{\ang}[1]{\left\langle{#1}\right\rangle}
\newcommand{\abs}[1]{\left\lvert{#1}\right\rvert}
\newcommand{\EQ}[1]{\begin{equation}\begin{split} #1 \end{split}\end{equation}}
\newcommand{\Del}[1]{}
\newcommand{\mand}{{\ \ \text{and} \ \  }}
\newcommand{\mfor}{{\ \ \text{for} \ \ }}
\newcommand{\mas}{{\ \ \text{as} \ \ }}
\newcommand{\uD}{\operatorname{D}}
\newcommand{\dr}{\, \mathrm{d}r}
\newcommand{\dt}{\, \mathrm{d}t}
\definecolor{green}{rgb}{0,0.8,0} 
\newcommand{\ud}{\mathrm{d}}
\newcommand{\alp}{\alpha}
\newcommand{\veps}{\varepsilon}
\newcommand{\bfd}{{\bf d}}
\newcommand{\bfe}{{\bf e}}
\newcommand{\bfi}{{\bf i}}
\newcommand{\bfq}{{\bf q}}
\newcommand{\calC}{\mathcal C}
\newcommand{\calE}{\mathcal E}
\newcommand{\calL}{\mathcal L}
\newcommand{\calS}{\mathcal S}
\newcommand{\calT}{\mathcal T}
\newcommand{\calW}{\mathcal W}
\newcommand{\calY}{\mathcal Y}
\newcommand{\calZ}{\mathcal Z}
\begin{document}
	\parindent=0pt
	\title[Soliton resolution for energy-critical NLH in the radial case]{Soliton resolution for the energy-critical nonlinear heat equation in the radial case}
	\author{Shrey Aryan}
\begin{abstract}
We establish the Soliton Resolution Conjecture for the radial critical nonlinear heat equation in dimension $D\geq 3.$ Thus, every finite energy solution continuously resolves into a finite superposition of asymptotically decoupled copies of the ground state and free radiation.
\end{abstract}
\maketitle
\section{Introduction}
\subsection{Setting of the Problem} We study the energy-critical semi-linear heat flow on $\mathbb{R}^{D}$,
\begin{align}\label{eqn:NLH}
\partial_{t} u & =\Delta u+|u|^{\frac{4}{D-2}} u \nonumber \\
u(0, x) & =u_{0}(x),
\end{align}
where $D \geq 3$, under radial symmetry. Let $r=|x|$ denote the radial coordinate, $u=u(t, r) \in \mathbb{R}$, then the above equation reduces to 
\begin{align}\label{eqn:NLH}
\partial_{t} u & =\partial_{r}^{2} u +\frac{D-1}{r} \partial_{r}u+|u|^{\frac{4}{D-2}} u \nonumber \\
u(0, r) & =u_{0}(r).
\end{align}
We restrict ourselves to smooth solutions that remain uniformly bounded in the energy space $\mathcal{E}$, i.e.,
\begin{align*}
\|u\|_{\mathcal{E}}^{2}:=\int_{0}^{\infty}\left[\left(\partial_{r} u(r)\right)^{2}+\frac{(u(r))^{2}}{r^{2}}\right] r^{D-1} \dr < \infty.
\end{align*}
By \cite{brezis-cazenave}, given finite energy data, we let $T_{+}>0$ denote the maximal forward time of existence. Define the nonlinear energy functional associated with \eqref{eqn:NLH} 
\begin{align}\label{eqn:energy}
E(u):=\int_{0}^{\infty} \frac{1}{2}\left(\partial_{r} u(r)\right)^{2} r^{D-1} \mathrm{d} r-\int_{0}^{\infty} \frac{1}{2^*}|u(r)|^{2^*} r^{D-1} \mathrm{d} r,
\end{align}
where $2^*:=\frac{2D}{D-2}$ along with the energy density
\begin{align}\label{eqn:energy density}
    \mathbf{e}(u) := \frac{1}{2} (\partial_r u)^2 - \frac{D-2}{2D} |u|^{\frac{2D}{D-2}}.
\end{align}
Solutions to \eqref{eqn:NLH} are invariant under the scaling
\begin{align*}
u(t, r) \mapsto u_{\lambda}(t, r):=\lambda^{-\frac{D-2}{2}} u\left(t / \lambda^{2}, r / \lambda\right), \quad \lambda>0    
\end{align*}
and \eqref{eqn:NLH} is called energy-critical since $E(u)=E\left(u_{\lambda}\right)$. Testing \eqref{eqn:NLH} by $\partial_{t} u$ and integrating by parts we observe the formal energy identity
\begin{align}
E(u(T))+\int_{0}^{T}\|\mathcal{T}(u(t))\|_{L^{2}}^{2} \mathrm{d} t=E(u(0))    
\end{align}
for each $T>0$, where $\mathcal{T}(u):=\partial_{r}^{2} u+\frac{D-1}{r} \partial_{r} u+|u|^{\frac{4}{D-2}} u$. We define the Aubin--Talenti elliptic solution, $W: \mathbb{R}^{D} \rightarrow \mathbb{R}$, by
\begin{align}\label{eqn:talenti}
W(x):=\left(1+\frac{|x|^{2}}{D(D-2)}\right)^{-\frac{D-2}{2}}    
\end{align}
and recall that by Caffarelli-Gidas-Spruck \cite{CGS}, all entire positive solutions to the stationary equation 
\begin{align}
-\Delta W(x)=|W(x)|^{\frac{4}{D-2}} W(x), \quad x \in \mathbb{R}^D 
\end{align}
are given by Aubin--Talenti bubbles up to sign, scaling, and translation. Since the elliptic solutions $W$ are radially symmetric, we will often denote them $W(x)=W(r)$ with $r=|x|.$ For each $\lambda>0$, we write $W_{\lambda}(r):=\lambda^{-\frac{D-2}{2}} W(r / \lambda)$. 
\subsection{Statement of the Main result}
\begin{thm}[Soliton Resolution]\label{thm:main}
Let $D \geq 3$ and let ${u}(t)$ be a finite energy solution to \eqref{eqn:NLH} with initial data ${u}(0)={u}_0 \in \mathcal{E}$, defined on its maximal forward interval of existence $\left[0, T_{+}\right)$. Suppose that,
$$
\limsup _{t \rightarrow T_{+}}\|{u}(t)\|_{\mathcal{E}}<\infty .
$$
Then either
\newline
\newline
$\mathrm{(i)}$ $T_{+}=\infty$, there exist a time $T_0>0$, an integer $N \geq 0$, continuous functions $\lambda_1(t), \ldots, \lambda_N(t) \in C^0\left(\left[T_0, \infty\right)\right)$, signs $\iota_1, \ldots, \iota_N \in\{-1,1\}$, and ${g}(t) \in \mathcal{E}$ defined by
$$
{u}(t)=\sum_{j=1}^N \iota_j {W}_{\lambda_j(t)}+{g}(t),
$$
such that
$$
\|{g}(t)\|_{\mathcal{E}}+\sum_{j=1}^N \frac{\lambda_j(t)}{\lambda_{j+1}(t)} \rightarrow 0 \text { as } t \rightarrow \infty,
$$
where above we use the convention that $\lambda_{N+1}(t)=\sqrt{t}$;
\newline 
\newline
$\mathrm{(ii)}$ $T_{+}<\infty$, there exist a time $T_0<T_{+}$, a function ${u}^* \in \mathcal{E}$, an integer $N \geq 1$, continuous functions $\lambda_1(t), \ldots, \lambda_N(t) \in C^0\left(\left[T_0, T_{+}\right)\right)$, signs $\iota_1, \ldots, \iota_N \in\{-1,1\}$, and ${g}(t) \in \mathcal{E}$ defined by
$$
{u}(t)=\sum_{j=1}^N \iota_j {W}_{\lambda_j(t)}+{u}^*+{g}(t),
$$
such that
$$
\|{g}(t)\|_{\mathcal{E}}+\sum_{j=1}^N \frac{\lambda_j(t)}{\lambda_{j+1}(t)} \rightarrow 0 \text { as } t \rightarrow T_{+},
$$
where above we use the convention that $\lambda_{N+1}(t)=\sqrt{T_{+}-t}$.   
\end{thm}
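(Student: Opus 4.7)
The strategy combines three ingredients: a Struwe-type bubble decomposition along a carefully chosen sequence of times, a modulation lemma producing continuous scale parameters in a tubular neighborhood of a multi-bubble configuration, and a dissipation-based no-return argument that upgrades the resulting sequential resolution to continuous-in-time resolution. I would argue in parallel for the global case (i) and the blow-up case (ii); the only structural differences are the scale convention and, in (ii), the identification of a regular asymptotic profile $u^*$ away from the concentration point $r=0$.

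The first step is sequential resolution. Combined with the assumed uniform bound on $\|u(t)\|_{\mathcal{E}}$, the dissipation identity
\[ E(u(T)) + \int_0^T \|\mathcal{T}(u(t))\|_{L^2}^2 \, \mathrm{d}t = E(u(0)) \]
implies $\int_0^{T_+} \|\mathcal{T}(u(t))\|_{L^2}^2 \, \mathrm{d}t < \infty$, so there exists a sequence $t_n \to T_+$ with $\|\mathcal{T}(u(t_n))\|_{L^2} \to 0$. Thus $\{u(t_n)\}$ is a bounded radial Palais--Smale sequence for $E$ at some level, and a radial adaptation of Struwe's profile decomposition (together with the characterization of positive radial solutions of the stationary equation by Caffarelli--Gidas--Spruck) yields
\[ u(t_n) = \sum_{j=1}^{N} \iota_j W_{\lambda_j^n} + g^n, \qquad \|g^n\|_{\mathcal{E}} \to 0, \]
with $\lambda_j^n / \lambda_{j+1}^n \to 0$. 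Passing to a subsequence, $N$, the signs $\iota_j$ and the scale ordering may be taken independent of $n$; in case (ii) one extracts in addition a regular limit $u^* \in \mathcal{E}$ on $\{r \geq \delta\}$ for every $\delta > 0$.

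The main task is to upgrade this subsequential decomposition to continuous-in-time resolution. I would first prove a modulation lemma: on the set where $u(t)$ lies within $\mathcal{E}$-distance $\eta$ of the $N$-bubble manifold with fixed signs and sufficiently separated scales, the implicit function theorem defines unique continuous scales $\lambda_j(t)$ by imposing orthogonality of the remainder to the scaling directions $\partial_\lambda W_{\lambda_j}$. The crucial missing ingredient is then a no-return statement: once the trajectory is $\eta$-close to the manifold, it must stay $O(\eta)$-close for all later time. For the parabolic flow I expect this to follow from a quantitative energy-drop estimate: on any time interval along which the $\mathcal{E}$-distance to the multi-bubble manifold is bounded below by $\eta$, the dissipated energy $\int \|\partial_t u\|_{L^2}^2 \, \mathrm{d}t$ is bounded below by a positive constant $c(\eta)$, so summability of the total dissipation permits only finitely many such intervals.

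The hardest step, and the main obstacle, is precisely this collision/no-return analysis: a priori nothing prevents the modulation parameters from colliding between the times $t_n, t_{n+1}$ while the instantaneous dissipation $\|\mathcal{T}(u(t))\|_{L^2}^2$ remains small and the $L^2$-in-time norm stays integrable. I would approach this via a virial-type localized multiplier method in the spirit of Jendrej--Lawrie: testing the equation against a weight sharply localized between consecutive bubble scales and exploiting the ordered-scale hypothesis to decouple the bubble interactions should yield a lower bound on either the distance to the manifold or on the instantaneous dissipation, thereby closing the no-return estimate. In case (ii), once $u^*$ is constructed via interior parabolic regularity on $\{r > 0\}$, the modulation and no-return scheme is applied to $u - u^*$ to produce the decomposition with $\lambda_j(t)/\sqrt{T_+ - t} \to 0$, while in case (i) the same scheme yields the decomposition with $\lambda_j(t)/\sqrt{t} \to 0$, completing the resolution.
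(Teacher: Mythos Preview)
Your overall architecture --- sequential bubbling via a Palais--Smale argument, modulation near the multi-bubble manifold, and a dissipation-based no-return mechanism --- matches the paper's. The sequential step and the modulation lemma are essentially as in the paper. The gap is in the no-return mechanism itself.

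The specific claim you make is that on any time interval where $\bfd(t)\ge\eta$, the dissipated energy $\int\|\partial_t u\|_{L^2}^2\,\mathrm{d}t$ is bounded below by a fixed constant $c(\eta)>0$. This cannot be true as stated: both the energy and the equation are scale-invariant, so by rescaling one can make the dissipation over such an interval arbitrarily small while keeping $\bfd(t)\ge\eta$ throughout. The dissipation rate scales like $\lambda^{-2}$ and the natural time-scale like $\lambda^2$, so there is no uniform energy quantum per excursion. Consequently ``finitely many such intervals'' does not follow, and the argument as written does not close.

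What the paper does instead is introduce \emph{collision intervals} $[a_n,b_n]\in\mathcal{C}_K(\varepsilon_n,\eta)$ and the \emph{minimal} integer $K$ for which such a family exists. Minimality of $K$ is then used to prove that the duration of any collision satisfies $(d-c)^{1/2}\gtrsim\lambda_K(c)$: if it were shorter, one could propagate the $(N-K+1)$-bubble structure across the interval and exhibit $[c_n,d_n]\in\mathcal{C}_{K-1}$, contradicting minimality. From this one extracts subintervals $[c_n,d_n]$ of length $n^{-1}\lambda_K(c_n)^2$ on which $\bfd(t)\ge\varepsilon$; the compactness lemma then forces $\lambda_K(c_n)^2\|\partial_t u(t)\|_{L^2}^2\ge c_0$ on each, so $\int_{c_n}^{d_n}\|\partial_t u\|_{L^2}^2\,\mathrm{d}t\gtrsim n^{-1}$ and the harmonic series diverges. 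The point is that the lower bound on dissipation per interval is \emph{not} uniform --- it is $\sim n^{-1}$ --- and it is the divergence of $\sum n^{-1}$, not a fixed quantum, that yields the contradiction. Your virial/Jendrej--Lawrie instinct is the right reference, but the mechanism is this minimality-plus-harmonic-series argument rather than a uniform energy drop.
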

\begin{rem}
In the parabolic setting, similar results have been established for the harmonic map heat flow by Jendrej, Lawrie, and Schlag in \cite{lawrie-harmonic-map,lawrie-harmonic-map-nonradial} following the bubbling theory for harmonic maps pioneered by Struwe \cite{struwe}, which was further developed in \cite{topping, topping-annals, struwe, qing-tian,qing}. In fact, as explained in \cite{kim2025classification}, the connection between the harmonic map heat flow and the nonlinear heat equation can be seen by the following identity
\EQ{
\left(\partial_{r}^2 + \frac{\partial_r}{r} - \frac{k^2}{r^2}\right)\left(r^k u\right) 
= r^k\left(\partial_{r}^2 + \frac{(2k+2)-1}{r}\partial_r\right)u,
}
which relates the Laplacian operator for $k$--equivariant functions on $\R^2$ to the one for radial functions on $\mathbb{R}^{2k+2}.$ Thus, we will often use the same language from the harmonic map heat flow literature. Finally, note that for the nonlinear heat equation, Theorem \ref{thm:main} has been conjectured to be true and, during the preparation of this paper, was also raised as an open question by Kim and Merle in \cite{kim2025classification}.
\end{rem}
\begin{rem}
The Soliton Resolution Conjecture states that the evolution of generic solutions to nonlinear dispersive equations decouples as a sum of modulated traveling waves (or Solitons) and a free radiation term. The conjecture arose in the 1970s following the numerical simulations of Fermi–Pasta–Ulam \cite{pasta}, Zabusky–Kruskal \cite{zabusky}. Following the breakthrough work of Kenig and Merle \cite{kenig-merle}, the Soliton Resolution Conjecture has been established for some classes of dispersive PDEs. In particular, for the wave equation, the Soliton Resolution Conjecture has been proved in the radial case for either odd space dimensions or $D=4,6$ in \cite{Duyckaerts-wave:d=3, Duyckaerts-wave-2, Duyckaerts-wave-5, Duyckaerts-wave-6, collot2022soliton} using the method of energy channels. On the other hand, using virial inequalities, Jia and Kenig in \cite{jia-kenig} proved the sequential soliton resolution in dimension $D = 6$. Recently, Jendrej and Lawrie in \cite{lawrie-wave} established the Soliton Resolution Conjecture for the radial energy critical nonlinear wave equation in all space dimensions $D\geq 4$ using a novel argument based on the analysis of collision intervals. They also established the same result for the equivariant energy critical wave maps equation in \cite{lawrie-wavemaps}. Finally, the Soliton Resolution Conjecture has also been established for the equivariant self-dual Chern--Simons--Schrödinger equation by Kim--Kwon--Oh in \cite{oh-chern-simons}.
\end{rem}
\begin{rem}
The nonlinear heat equation with power nonlinearity 
\begin{align}
\partial_t u &=\Delta u+|u|^{p-1} u, \quad \text {on }  (0, T_+)\times \R^D \\
u(0,x) &=u_0(x),\quad  x\in  \R^D
\end{align}
has been the subject of intensive study beginning with the seminal works of Giga and Kohn \cite{giga-1,giga-2}. For a detailed introduction, see the excellent monograph of Quittner and Souplet \cite{quitter}. Since our main theorem deals with the asymptotic behavior near the blow-up time, we briefly review some results in that direction. When the nonlinearity is energy subcritical, i.e., $1<p<\infty$ when $D=1,2$ and $1<p<\frac{D+2}{D-2}$ when $D\geq 3$, then \cite{giga-1,giga-2} showed that any blow-up solution is of Type I, i.e., there exists a constant $C>0$ such that
\begin{align}
    \|u(t)\|_{L^\infty(\R^D)} \leq C (T_{+}-t)^{-\frac{1}{p-1}}.
\end{align}
Otherwise, the blow-up is of Type II. In the energy critical setting, i.e., $p=\frac{D+2}{D-2},$ Filippas, Herrero and Velázquez \cite{Filippas}, established that the solution exhibits Type I blow-up if the initial data is positive and radially decreasing in dimension $D \geq 3.$ This result was later improved by Matano and Merle \cite{matano2004nonexistence} where they proved, amongst other things, the same result with radial, positive and bounded initial data. Next, assuming that the initial datum is close to the ground state, Collot, Merle, and Raphaël in \cite{collot-merle} established a Trichotomy in dimension $D\geq 7$; the solution either dissipates to zero, or approaches a rescaled Aubin--Talenti solution, or blows up in finite time, and the blow-up is of Type I. Thus, if the initial data is close to a ground state in dimension $D\geq 7$, the solution does not exhibit Type II blow-up. Recently, Wang and Wei in \cite{wang2021refined} established that for positive initial datum in $L^\infty(\R^D)$ and $D\geq 7$, any blow-up solution is of Type I. In contrast to the previous results, Theorem~\ref{thm:main} is concerned with finite energy solutions that either exist globally in time or exhibit Type II blow-up. Examples of such solutions exhibiting non-trivial bubble tree decompositions have been recently constructed for the critical nonlinearity in dimensions $D\geq 7$ by del Pino, Musso, and Wei in \cite{del_Pino_2021}. Furthermore, the recent work of Kim and Merle \cite{kim2025classification} shows that under the assumption of radial symmetry, the above bubble tree constructions are the only possible examples in dimensions $D\geq 7.$ 
\end{rem}
\subsection{Summary of the Proof}Our proof is inspired by the recent breakthrough works of Jendrej--Lawrie, in particular, \cite{lawrie-harmonic-map}, where they established an analogous version of Theorem~\ref{thm:main} for the harmonic map heat flow in the equivariant setting. However, we encounter several difficulties arising from the non-positivity of the nonlinear energy and slow decay of $\Lambda W \footnote{Here $\Lambda= r\partial_r + \frac{D-2}{2}$}$ in lower dimensions $D<6$.  

The first obstacle also arises in the context of the energy-critical wave equation, where it is addressed using arguments relying on the finite speed of propagation. Since this property does not hold for solutions to \eqref{eqn:NLH}, we develop new energy estimates that are almost monotone along the heat flow in regions away from the origin. While the proof of these monotonicity formulas is straightforward, using them to establish Sequential Soliton Resolution is novel and potentially applicable to other problems involving focusing nonlinearities. 

On the other hand, the slow decay of $\Lambda W$ makes the modulation analysis more delicate in dimensions $D \in \{3,4,5\}$. The authors in \cite{lawrie-wave} and \cite{lawrie-wavemaps} circumvented this issue by performing a refined modulation analysis separately in dimensions $D\in \{4,5\}$. Instead of following their approach, we use our modified energy estimates, yielding a unified argument that works in all dimensions $D\geq 3$. 

We now explain the proof strategy in more detail. First, we define a multi-bubble configuration.
\begin{defn}[Multi-bubble configuration]\label{defn:multi-bubble}
Given $M \in\{0,1, \ldots\}, \vec{\iota}=\left(\iota_1, \ldots, \iota_M\right) \in$ $\{-1,1\}^M$ and an increasing sequence $\vec{\lambda}=\left(\lambda_1, \ldots, \lambda_M\right) \in(0, \infty)^M$, a multi-bubble configuration is defined by the formula
$$
\mathcal{W}(\vec{\iota}, \vec{\lambda} ; r):=\sum_{j=1}^M \iota_j W_{\lambda_j}(r) .
$$
When $M=0$, $\mathcal{W}(\vec{\iota}, \vec{\lambda} ; r):=0$.
\end{defn}
With this definition, we define a localized distance function to multi-bubble configurations by 
\begin{align}\label{defn:loc-dist}
{\delta}_R(u):=\inf _{M, \vec{\io}, \vec{\lambda}}\Bigl(\|u-\mathcal{W}(\vec{\iota}, \vec{\lambda})\|_{\mathcal{E}(r \leq R)}^2+\sum_{j=1}^M\left(\frac{\lambda_j}{\lambda_{j+1}}\right)^{\frac{D-2}{2}}\Bigr)^{\frac{1}{2}}    
\end{align}
where the infimum is taken over all $M \in\{0,1,2, \ldots\}$, all vectors $\vec{\iota} \in\{-1,1\}^M, \vec{\lambda} \in$ $(0, \infty)^M$, and we use the convention that the last scale $\lambda_{M+1}=R$. 

The first step in the Jendrej--Lawrie framework is to establish a localized sequential compactness lemma, which essentially states that a sequence of functions with vanishing tension converges (locally in space) to a multi-bubble configuration. Thus, given a sequence of functions $u_n$  with bounded energy,  a sequence $\rho_n \in (0, \infty)$ of scales,  and tension $\calT$ vanishing in $L^2$ relative to the scale $\rho_n$, i.e., $ \lim_{n \to \infty}  \rho_n \| \calT(u_n) \|_{L^2} = 0$, there exists a subsequence of the $u_n$ that converges to a multi-bubble configuration up to large scales relative to $\rho_n$, i.e., $ \lim_{n \to \infty} \bs \de_{R_n \rho_n} ( u_n)  = 0$ for some sequence $R_n \to \infty$. Fortunately, sequential compactness for the nonlinear critical heat equation has been established recently in \cite{lawrie-viet}.
\begin{lem}[Localized sequential bubbling]\label{lem:loc-seq}
Let $u(t)$ be the solution to \eqref{eqn:NLH} with smooth initial data $u(0)=u_0 \in \mathcal{E}$, defined on its maximal interval of existence $\left[0, T_{+}\right)$. Suppose that
$$
\limsup_{t \rightarrow T_{+}}\|u(t)\|_\mathcal{E}<\infty .
$$
Then either
\newline
$\mathrm{(i)}$ $T_{+}=\infty$, there exist a sequence of times $t_n \rightarrow \infty$, and a sequence $R_n \rightarrow \infty$ such that,
$$
\lim _{n \rightarrow \infty} \delta_{R_n \sqrt{t_n}}\left(u\left(t_n\right)\right)=0 .
$$
$\mathrm{(ii)}$ $T_{+}<\infty$, there exist a sequence of times $t_n \rightarrow T_{+}$, and a sequence $R_n \rightarrow \infty$ such that,
$$
\lim _{n \rightarrow \infty} \delta_{R_n \sqrt{T_{+}-t_n}}\left(u\left(t_n\right)\right)=0 .
$$
\end{lem}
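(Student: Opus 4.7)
The plan is to combine the energy dissipation identity for \eqref{eqn:NLH} with the localized sequential bubbling result of \cite{lawrie-viet}. The dissipation identity yields $L^2_t$ integrability of the tension $\mathcal{T}(u(t))$ on $[0,T_+)$, and a dyadic pigeonhole on this time integral produces times $t_n$ at which $\|\mathcal{T}(u(t_n))\|_{L^2}$ is small compared to the inverse of the natural parabolic scale $\sqrt{t_n}$ in Case (i) and $\sqrt{T_+-t_n}$ in Case (ii). The sequential bubbling result then converts this smallness into the desired localized bubble decomposition.

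First I would verify that $E(u(t))$ is uniformly bounded on $[0,T_+)$: since $\sup_t \|u(t)\|_\mathcal{E} < \infty$ by hypothesis, the critical Sobolev embedding $\dot H^1(\mathbb{R}^D) \hookrightarrow L^{2D/(D-2)}(\mathbb{R}^D)$ controls the nonlinear term in $E$, giving $|E(u(t))| \lesssim 1$. Substituting this into the energy identity yields
\begin{align*}
\int_0^{T_+} \|\mathcal{T}(u(s))\|_{L^2}^2 \, \mathrm{d} s \;\leq\; E(u_0) - \inf_{0 \leq t < T_+} E(u(t)) \;<\; \infty.
\end{align*}

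Next I perform a dyadic pigeonhole. In Case (i), I partition $[1,\infty)$ into $I_n := [2^n, 2^{n+1}]$ and set $a_n := \int_{I_n} \|\mathcal{T}(u(s))\|_{L^2}^2 \, \mathrm{d} s$, so $\sum_n a_n < \infty$ and hence $a_n \to 0$. Chebyshev's inequality applied to the nonnegative function $\|\mathcal{T}(u(\cdot))\|_{L^2}^2$ on $I_n$ produces a point $t_n \in I_n$ with $\|\mathcal{T}(u(t_n))\|_{L^2}^2 \lesssim 2^{-n} a_n$, whence $t_n \|\mathcal{T}(u(t_n))\|_{L^2}^2 \lesssim a_n \to 0$. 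In Case (ii), the same argument applied to $I_n := [T_+ - 2 \cdot 2^{-n}, T_+ - 2^{-n}]$ of length $2^{-n}$ produces $t_n \in I_n$ with $(T_+ - t_n) \|\mathcal{T}(u(t_n))\|_{L^2}^2 \to 0$. Setting $\rho_n := \sqrt{t_n}$ in Case (i), resp.\ $\rho_n := \sqrt{T_+ - t_n}$ in Case (ii), we obtain $\rho_n \|\mathcal{T}(u(t_n))\|_{L^2} \to 0$ together with $\sup_n \|u(t_n)\|_\mathcal{E} < \infty$, so the localized sequential bubbling result of \cite{lawrie-viet} supplies, along a subsequence, a sequence of scales $R_n \to \infty$ with $\delta_{R_n \rho_n}(u(t_n)) \to 0$, which is the conclusion.

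The substantive analytic work—extracting bubble profiles from a sequence with vanishing tension via concentration-compactness, and showing that the error vanishes in the localized energy norm on $\{r \leq R_n \rho_n\}$ for some diverging $R_n$—is entirely packaged into the input from \cite{lawrie-viet}; this is the main obstacle, but it has already been surmounted there. The present lemma only records that the energy dissipation identity, combined with a pigeonhole on dyadic temporal windows adapted to the two possible long-time behaviors, furnishes a sequence $(u(t_n), \rho_n)$ to which that input applies.
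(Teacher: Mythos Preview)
Your proposal is correct and follows essentially the same route as the paper: the paper states that Lemma~\ref{lem:loc-seq} is a direct consequence of the Compactness Lemma (Lemma~\ref{lem:compactness}, taken from \cite{lawrie-viet}), and your argument supplies precisely the missing glue, namely the energy dissipation bound~\eqref{eq:tension-L2} together with a pigeonhole on dyadic time windows to produce times $t_n$ with $\rho_n\|\mathcal{T}(u(t_n))\|_{L^2}\to 0$, after which Lemma~\ref{lem:compactness} applies verbatim. The only cosmetic difference is that the paper (in the proofs of Propositions~\ref{prop:seq-ftbu} and~\ref{prop:seq-global}) phrases the selection of $t_n$ as an immediate consequence of $\int_0^{T_+}\|\mathcal{T}(u(t))\|_{L^2}^2\,\mathrm{d}t<\infty$ rather than via an explicit dyadic decomposition, but the underlying mechanism is identical.
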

The proof of the above result is a consequence of the following lemma, which we recall below since it will be used in the final section of the proof.
\begin{lem}[Compactness Lemma]\label{lem:compactness}
Let $u_n \in \mathcal{E}$ be a sequence with $\lim \sup _{n \rightarrow \infty}\left\|u_n\right\|_{\mathcal{E}}<\infty$. Let $\rho_n \in(0, \infty)$ be a sequence and suppose that
$$
\lim _{n \rightarrow \infty}\left(\rho_n\left\|\mathcal{T}\left(u_n\right)\right\|_{L^2}\right)=0 .
$$
Then, there exists a sequence $R_n \rightarrow \infty$ so that, up to passing to a subsequence of the $u_n$, we have,
$$
\lim _{n \rightarrow \infty} \delta_{R_n \rho_n}\left(u_n\right)=0 .
$$
The subsequence $u_n$ can be chosen so that there are fixed $(M, \vec{\iota}) \in (\mathbb{N} \cup\{0\}) \times\{-1,1\}^M$, a sequence $\vec{\lambda}_n \in(0, \infty)^M$, and $C_0>0$ with
$$
\lim _{n \rightarrow \infty}\Bigl(\|u_n-\mathcal{W}(\vec{\iota}, \vec{\lambda}_n)\|_{\mathcal{E}(r \leq R_n \rho_n)}^2+\sum_{j=1}^{M-1} \left(\frac{\lambda_{n, j}}{\lambda_{n, j+1}}\right)^{\frac{D-2}{2}}\Bigr)=0,
$$
and,
$$
\lambda_{n, M} \leq C_0 \rho_n, \quad \forall n .
$$
\end{lem}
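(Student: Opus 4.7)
The plan is to reduce the problem to unit scale and then perform iterative Struwe-type bubble extraction. Since the quantity $\rho_n \|\mathcal{T}(u_n)\|_{L^2}$ is scale-invariant, I set $\tilde u_n(r) := \rho_n^{(D-2)/2} u_n(\rho_n r)$; then $\|\tilde u_n\|_{\mathcal E}$ remains uniformly bounded while $\|\mathcal T(\tilde u_n)\|_{L^2} \to 0$. It suffices to establish the conclusion for $\tilde u_n$ with normalized scale $1$ and rescale back, so I may assume $\rho_n \equiv 1$ and seek $R_n \to \infty$ together with $\delta_{R_n}(u_n) \to 0$ and the innermost-scale bound $\lambda_{n,M} \leq C_0$.

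The bubble extraction proceeds by induction on energy. Starting from $u_n^{(0)} := u_n$, at the $k$-th step I identify a concentration scale $\mu_n^{(k+1)}$ for $u_n^{(k)}$, e.g. the smallest radius at which a fixed fraction of the remaining energy is captured; if no such scale exists along a subsequence, the iteration halts with $M=k$. Otherwise, after passing to a subsequence along which $\mu_n^{(k+1)}$ is bounded, I rescale $u_n^{(k)}$ by $\mu_n^{(k+1)}$ and take a weak $\dot H^1$-limit $v_\infty$. The scale invariance of the tension combined with $\|\mathcal T(u_n^{(k)})\|_{L^2} \to 0$ force $v_\infty$ to solve $-\Delta v_\infty = |v_\infty|^{\frac{4}{D-2}} v_\infty$ on $\mathbb{R}^D$, and by the Caffarelli--Gidas--Spruck classification together with radiality, $v_\infty = \iota_{k+1} W$ with $\iota_{k+1} \in \{-1, +1\}$ (nontriviality from the concentration hypothesis). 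I then set $u_n^{(k+1)} := u_n^{(k)} - \iota_{k+1} W_{\mu_n^{(k+1)}}$. A Bahouri--Gérard-type orthogonality identity shows that the energy drops by $\|W\|_{\mathcal E}^2 + o(1)$ at each step, so the iteration terminates at a finite $M$.

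For the final configuration, I discard the extracted profiles with $\mu_n^{(j)} \to \infty$: such bubbles contribute only $O((R/\mu_n^{(j)})^{(D-2)/2})$ to the localized $\mathcal E(r \leq R)$-norm and are thus absorbed into $o(1)$ on regions $r \leq R_n$ with $R_n$ growing sufficiently slowly. I reorder the retained scales as $\lambda_{n,1} < \cdots < \lambda_{n,M} \leq C_0$, and establish the pseudo-orthogonality $\lambda_{n,j}/\lambda_{n,j+1} \to 0$ for $j < M$ via the standard observation that two comparable extracted scales would force the joint weak limit to be a single Aubin--Talenti profile rather than two distinct ones, contradicting the construction. A diagonal argument then produces $R_n \to \infty$ along which $\|u_n - \mathcal W(\vec \iota, \vec \lambda_n)\|_{\mathcal E(r \leq R_n)}$ and $\sum_{j=1}^{M-1} (\lambda_{n,j}/\lambda_{n,j+1})^{(D-2)/2}$ simultaneously tend to zero.

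The main obstacle is controlling the tension of the remainder at each iteration. The semilinear term $|u|^{\frac{4}{D-2}} u$ does not split linearly across $u_n^{(k)} = \iota_{k+1} W_{\mu_n^{(k+1)}} + u_n^{(k+1)}$, generating cross terms whose $L^2$-norm must be shown to vanish in order to persist the tension bound on $u_n^{(k+1)}$. This requires delicate estimates exploiting the sharp pointwise decay of Aubin--Talenti bubbles together with the pseudo-orthogonality of extracted scales. The radial assumption (via radial Sobolev embedding and the absence of translation parameters) substantially simplifies the analysis here, but making the iterative bookkeeping watertight, ensuring in particular that the final remainder truly vanishes in the localized energy norm and not merely weakly, is the delicate core of the argument.
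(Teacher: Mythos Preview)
The paper does not prove this lemma; it is quoted from \cite{lawrie-viet} and used as a black box. Your overall strategy---rescale to $\rho_n\equiv 1$, then perform Struwe/Bahouri--G\'erard bubble extraction with profiles forced to be Aubin--Talenti by the vanishing tension---is the standard route and almost certainly matches what is done there.

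There is, however, a genuine gap in the way you set up the iteration. You assert that at step $k$ the remainder satisfies $\|\mathcal T(u_n^{(k)})\|_{L^2}\to 0$, and you flag the nonlinear cross terms as the ``main obstacle''. In fact this claim is false in low dimensions and cannot be repaired by pointwise decay estimates alone: already for $D=3$, the exact two-bubble $u_n=W+W_{\mu_n}$ with $\mu_n\to 0$ has $\|\mathcal T(u_n)\|_{L^2}\gtrsim \mu_n^{-1/2}\to\infty$ (the term $W\,W_{\mu_n}^4$ dominates). So subtracting a bubble at scale $\sim 1$ does \emph{not} leave a remainder with small $L^2$ tension. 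The correct fix is to avoid iterating the tension altogether: at each newly identified concentration scale $\mu_n^{(k+1)}$ (which is bounded by construction), rescale the \emph{original} sequence $u_n$, not $u_n^{(k)}$. The previously extracted bubbles, being at pairwise orthogonal scales, converge weakly to $0$ after this rescaling, so the weak $\dot H^1$-limit of the rescaled $u_n^{(k)}$ coincides with that of the rescaled $u_n$; and for the latter the rescaled tension has $L^2$-norm $\mu_n^{(k+1)}\|\mathcal T(u_n)\|_{L^2}\to 0$, which suffices to identify the limit as a stationary solution. This sidesteps your ``main obstacle'' entirely.

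One smaller point: Caffarelli--Gidas--Spruck classifies only \emph{positive} entire solutions. To conclude that the radial finite-energy weak limit is $\pm W_\lambda$ (rather than some sign-changing radial solution), you need the ODE/phase-plane classification of radial solutions to $-\Delta v=|v|^{4/(D-2)}v$, not CGS.
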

The next step is to use Lemma \ref{lem:loc-seq} to prove the Soliton Resolution Conjecture along a sequence of times. This has been done using energy estimates in the context of the harmonic map heat flow in \cite{lawrie-harmonic-map}. The key idea there was to establish localized energy monotonicity inequalities, which were used to deduce the size of the energy at later times in a suitable region. However, in our case, the localized nonlinear energy $\int_{0}^{\infty} \left(\frac{1}{2}|\nabla u|^2 -\frac{1}{2^*} |u|^{2^*}\right)\phi^2 r^{D-1} \dr$ is not necessarily non-negative, which prevents us from directly adapting the arguments in \cite{lawrie-harmonic-map}.

We overcome this obstacle by working with the localized $\E$-norm (see \eqref{defn:modified e-norm}) and establishing an almost monotonicity formula. In particular, defining the modified energy density
\EQ{
\Tilde{\mathbf{e}}(u):=|\partial_r u|^2 + \frac{u^2}{r^2}
}
we get
\begin{align}\label{ineq:almost-montone}
&\int_0^{\infty} \Tilde{\mathbf{e}}(u(t_2)) \phi^2 r^{D-1}\dr - \int_0^{\infty} \Tilde{\mathbf{e}}(u(t_1))\phi^2 r^{D-1}\dr   \\
&\leq -\int_{t_1}^{t_2}\int_0^{\infty} (\partial_t u)^2 \phi^2 r^{D-1}\dr \dt + 4 \int_{t_1}^{t_2}\int_0^{\infty} |\partial_r u|^2|\partial_r \phi|^2 r^{D-1}\dr \dt  \\
&\quad +2 \left(\int_{t_1}^{t_2}\int_0^{\infty} |u|^{2p} \phi^2 r^{D-1}\dr \dt \right)^{1/2}  \left(\int_{t_1}^{t_2}\int_0^{\infty} (\partial_t u)^2 \phi^2 r^{D-1}\dr \dt\right)^{1/2}  \\
&\quad +2 \left(\int_{t_1}^{t_2} \int_0^{\infty} \frac{|u|^2}{r^4} \phi^2 r^{D-1}\dr \dt\right)^{1/2}  \left(\int_{t_1}^{t_2} \int_0^{\infty}(\partial_t u)^2 \phi^2 r^{D-1}\dr \dt \right)^{1/2}
\end{align}
where $\phi: I \times (0,\infty)\to [0,\infty)$ is a smooth function defined on a time interval $I\subset [0,\infty)$, $0<t_1<t_2$ with $t_1,t_2\in I$ and $\partial_t \phi \leq 0.$ Since the modified energy density $\Tilde{\mathbf{e}}(u)$ is non-negative we can conveniently propagate smallness estimates from some initial time $t_1$ to a later time $t_2$. On the other hand, we pay a price since the nonlinearity appears in the RHS of \eqref{ineq:almost-montone} that requires us to control the $L^{2p}$ norm of the solution $u$, which falls well outside the range of the standard Sobolev embedding $L^{p+1}(\R^D) \hookrightarrow \dot{H}^1 (\R^D)$. 

We resolve this issue by using the radial Sobolev embedding, which gives us pointwise decay of the solution $u$ away from the origin. Thus localizing the modified energy $\Tilde{E}(u)=\int_0^{\infty} \Tilde{\mathbf{e}}(u) r^{D-1}\dr$ away from the origin, we deduce a localized energy monotonicity formula that is remarkably similar to the one derived for the harmonic map heat flow in \cite{lawrie-harmonic-map}. However, this restriction on the localization region introduces more technical difficulties.

When $T_+=\infty$, we follow the arguments in \cite{lawrie-harmonic-map} along with one new idea, which is required to show the positivity of the nonlinear energy away from the origin. To this end, we prove Lemma \ref{lem:localized-coercivity} where we show that the nonlinear energy of $u$ on the tail region $[r_0,\infty)$, for any $r_0>0$, is non-negative provided its $\E$--norm on $[r_0,\infty)$ is small enough. Thus, this lemma allows us to deduce the non-negativity of the nonlinear energy on tails, where we know that the solution $u$ has a small $\E$--norm since most of its energy is captured by the $N$-bubble configuration in some ball centered at the origin.

On the other hand, when $T_+< \infty$, the argument is more involved. This is because the energy of the solution $u$ is now given by the $N$-bubbles and a weak limit of the flow, denoted by $u^*$. Furthermore, we cannot localize the modified energy $\Tilde{E}$ near the origin, and thus, we cannot access the bubbling region directly, forcing us to argue indirectly. This issue does not arise in the harmonic map heat flow case, where, for instance, one can directly show that the energy of the solution $u$ in the parabolic region is asymptotically equal to the sum of the energies of the $N$-bubbles. This fact, in turn, allows the authors in \cite{lawrie-harmonic-map} to deduce that the energy of $u-u^*$ away from the parabolic region is small. Unfortunately, this line of reasoning does not work in our case, and thus, we are forced to argue in the reverse direction, which makes the argument more technical. See Section \ref{sec:finite-time} for more details.

Having established Sequential Soliton Resolution, we conclude that there exists a sequence of times $t_n\to T_+$ such that the distance to the $N$-bubble configuration denoted by $\bfd(t)$ (see Definition~\ref{def:proximity}), satisfies
\EQ{
\lim_{n \to \infty} \bfd(t_n)  = 0. 
}
Theorem~\ref{thm:main} follows from the fact that $\lim_{t \to T_+} \bfd(t)  = 0$. To prove this, we argue by contradiction, i.e., suppose that $\bfd(t)$ does not converge to zero. Then, heuristically, this means that $\bfd(t)$ is large along a certain sequence of times, or, in other words, $u(t)$ deviates from the $N$-bubble configuration. Instead of analyzing this sequence of times, the key insight in \cite{lawrie-wavemaps} is to consider a sequence of time intervals that keeps track of the dynamical history of the $N$-bubble configuration. Thus consider $[a_n,b_n]$, a sequence of time intervals where near the endpoints $a_n$ and $b_n$, $u(t)$ is close to an $N$-bubble configuration while inside $[a_n,b_n]$, $u(t)$ is close to an $(N-K)$-bubbles, where we choose $K$ to be the smallest non-negative integer such that the above properties hold. Observe that intuitively, $K\geq 1$ since otherwise, $u(t)$ will always be close to an $N$-bubble configuration, which would imply that $\lim_{t \to T_+} \bfd(t)  = 0$. 

Next, we can derive differential inequalities for the scales of the bubbles that come into collision by using modulation theory on these collision intervals. Let $\lambda_K$ denote the scale of the $K$-th bubble that loses its shape, i.e., it undergoes a collision on $[a_n,b_n].$ Then we show that for $n$ large enough, there exists $[c_n,d_n]\subset [a_n,b_n]$ such that 
\begin{align}\label{eqn:interval-inequality}
d_n - c_n \gtrsim n^{-1} \lam_K(c_n)^2.
\end{align}
Similar inequalities have been established in the context of nonlinear waves \cite{lawrie-wave,lawrie-wavemaps} and the harmonic map heat flow \cite{lawrie-harmonic-map}. However, the presence of a negative eigenvalue of the linearized operator (see Section \ref{subsec:multi-bubble})  gives rise to an unstable direction, which makes our modulation analysis similar to that for the nonlinear wave equation. By controlling this unstable direction and adapting the arguments in \cite{lawrie-wave}, one can expect to deduce the inequality \eqref{eqn:interval-inequality} when $D\geq 6$. We also expect that by using the refined modulation as in \cite{lawrie-wave}, one can push this down to $D\geq 4.$ However, it is not clear how to make this strategy work when $D=3.$ Thus, our last contribution is to prove Theorem \ref{thm:main} directly when $D\geq 3$ by getting around the technical difficulties arising from controlling the unstable direction in lower dimensions. To this end, we use energy estimates instead of differential inequalities for the unstable direction, thus avoiding any technical restrictions on the dimension, to prove \eqref{eqn:interval-inequality}. The key insight in this step is to realize that while the nonlinear heat equation has infinite speed of propagation, localized energy inequalities can be used to recover integral analogs of finite speed of propagation, similar to the nonlinear wave case. As a consequence, we can show that if a solution to \eqref{eqn:NLH} is close to an $N$-bubble configuration at some time in a region away from the origin (recall that our energy estimates work away from the origin) then this bound propagates for a short time implying that the distance to the $N$-bubble configuration still remains relatively small. This claim suffices to establish \eqref{eqn:interval-inequality} when $D\geq 3$ without having to resort to estimates used in \cite{lawrie-wave}, where the authors proved a similar fact by controlling, among other things, the size of the unstable mode, which in turn required them to restrict the dimensions to $D\geq 6$ or resort to refined modulation analysis when $D\in \{4,5\}$.

Combined with Lemma \ref{lem:loc-seq} this implies that $\inf_{t \in [c_n, d_n]}\lam_K(c_n)^2\| \calT(u(t))  \|_{L^2}^2 \gtrsim 1$. Thus using the fact that $\int_0^{T_+} \| \calT(u(t))  \|_{L^2}^2 \mathrm{d}t<\infty$ and \eqref{eqn:interval-inequality} we get that
\EQ{
C \ge \int_0^{T_+} \| \calT(u(t))  \|_{L^2}^2 \, \ud t  \ge \sum_{n\in \N} \int_{c_n}^{d_n} \| \calT(u(t))  \|_{L^2}^2 \, \ud t  \gtrsim \sum_{n\in \N}  n^{-1}=\infty, 
}
which is a contradiction and therefore $\lim_{t \to T_+} \bfd(t)= 0$.

\subsection{Notation} 
We use the following notations:
\begin{itemize}
    \item Given a function $\phi(r)$ and $\lambda>0$, we denote by $\phi_\lambda(r)=\lambda^{-\frac{D-2}{2}} \phi(r / \lambda)$, the $\dot{H}^1$-invariant rescaling, and by $\phi_{\underline{\lambda}}(r)=\lambda^{-\frac{D}{2}} \phi(r / \lambda)$ the $L^2$-invariant rescaling. Furthermore, we set $\Lambda:=r \partial_r+\frac{D-2}{2}$ and $\underline{\Lambda}:=r \partial_r+\frac{D}{2}$ as the infinitesimal generators of these scaling.
    \item Given two functions $f,g \in L^2((0,\infty),r^{D-1}\dr)$, we define their inner product
    \EQ{
\langle f \mid g \rangle:=\int_0^{\infty} f(r) g(r) r^{D-1} \dr.    
}
\item Given $u\in \cE$, we define the modified energy density $\Tilde{\bfe}$ and the localized $\cE$ norm as follows
\begin{align}\label{defn:modified e-norm}
\Tilde{\mathbf{e}}(u) := (\partial_r u)^2 + \frac{u^2}{r^2},\quad \Tilde{E}(u;r_1,r_2) := \| u\|_{\cE(r_1, r_2)}^2 := \int_{r_1}^{r_2} \tilde{\mathbf{e}}(u) r^{D-1}\vd r.
\end{align}
By convention, $\cE(r_0) := \cE(r_0, \infty)$ for $r_0 > 0$. We similarly define the nonlinear energy density and the localized nonlinear energy as follows
\EQ{
{\mathbf{e}}(u) := \frac{(\partial_r u)^2}{2} -\frac{|u|^{2^*}}{2^*},\quad  E(u;r_1,r_2) :=\int_{r_1}^{r_2} {\mathbf{e}}(u) r^{D-1}\vd r,
}
where $D\geq 3$, and $2^*:=\frac{2D}{D-2}$.
\item All the spatial integrals are with respect to the measure $r^{D-1}\ud r$ while all the integrals in time are with respect to the measure $\ud t.$ We will often omit these measures in integral identities for the sake of presentation.
\item Throughout the paper, the function $\chi\in C^\infty_c([0,\infty))$ denotes a smooth radial cut-off function, supported on $r\leq 2$ and $\chi\equiv 1$ when $r\leq 1$. Furthermore we denote $\chi_R(r):= \chi(r/R).$
\item The inequality $A \lesssim B$ means that $A \leq C B$ and $A \gtrsim B$ means that $A \geq c B$ for some constants $c, C>0$ possibly depending on the number of bubbles $N.$ We write $A \ll B$ if $\lim _{n \rightarrow \infty} A / B=0$.
\end{itemize}
\subsection{Acknowledgments}
The author is grateful to Andrew Lawrie for suggesting this problem and for his constant encouragement and many insightful conversations throughout the development of this work. The author would also like to thank Tobias Colding, Jacek Jendrej, Kihyun Kim, Yvan Martel, and Michael Struwe for their interest and helpful discussions. Finally, the author is indebted to the referee for their careful reading and constructive comments, which substantially improved the manuscript.
\section{Preliminaries}
\subsection{Local Cauchy Theory}
We first recall the local well-posedness theory for the heat equation in the energy space. See for instance, Theorem 1 in \cite{brezis-cazenave}, Theorem 2.1 in \cite{roxanas}, or Proposition 2.1 in \cite{collot-merle,ikeda2025globaldynamicsenergycriticalnonlinear}.
\begin{lem}[Local well-posedness]\label{lem:lwp}
Assume $D\geq 3$. Given any $u_0 \in \dot{H}^1(\R^D)$, there exist a time $T=T\left(u_0\right) \in (0,\infty]$ and a unique function $u \in C([0, T], \dot{H}^1(\R^D))$ with $u(0)=u_0$ such that $u$ is a classical solution of \eqref{eqn:NLH} on $(0, T) \times \R^{D}$. Let $T_+=T_+(u_0)>0$ denote the maximal time of existence.

The energy $E(u(t))$ is absolutely continuous and non-increasing as a function of $t \in[0, T_+)$ and for any $t_1, t_2 \in\left[0, T_{+}\right)$, $t_1\leq t_2$ there holds,
\begin{align}\label{eq:energy-identity}
 E(u(t_2))+ \int_{t_1}^{t_2} \int_0^{\infty}\left(\partial_t u(t, r)\right)^2 r^{D-1} \dr \mathrm{d} t=E(u(t_1)).
\end{align}
In particular, assuming boundedness of $\sup_{t\in [0,T_+)}\|u(t)\|_{\E}$ we have
\begin{align}\label{eq:tension-L2}
  \int_0^{T_{+}} \int_0^{\infty}\left(\partial_t u(t, r)\right)^2 r^{D-1} \mathrm{d} r \mathrm{d} t \lesssim \sup_{t\in [0,T_+)}(\|u(t)\|_{\E}^2+ \|u(t)\|_{\calE}^{2^*}) < \infty.
\end{align}
Furthermore, there exists $\rho>0$ such that if $u_0\in \dot{H}^1(\R^D)$ and $\|u_0\|_{\dot{H}^1}\leq \rho$, then $T_+=\infty$ and $\lim_{t\to \infty} \|u(t)\|_{\dot{H}^1}=0.$
\end{lem}
\subsection{Multi-Bubble Configuration}\label{subsec:multi-bubble}
Next, we recall some facts about solutions near multi-bubble configurations. For proofs and further references, see \cite{lawrie-wave}. Denote $\LL_{\calW}$ the operator obtained by linearization of~\eqref{eqn:NLH} about an $M$-bubble configuration $ \calW(\vec \iota, \vec \lam)$
\EQ{  \label{eq:LW-def} 
\LL_{\calW} \, g := \uD^2 E(\calW( \vec\iota, \vec \lam)) g = - \De g - f'(\calW( \vec\iota, \vec \lam) )g, 
}
where $f(z) := \abs{z}^{\frac{4}{D-2}} z$ and $f'( z) = \frac{D+2}{D-2} \abs{z}^{\frac{4}{D-2}}$. Given $g \in \E$, 
\EQ{
\La \uD^2 E(\calW(\vec \iota, \vec \lam)) g \mid g \Ra =  \int_0^\infty \Big((\p_r g(r))^2 -  f'(\calW( \vec \iota, \vec \lam)) g(r)^2 \, \Big) r^{D-1} \ud r. 
}
Denote $\mathcal{L}_\lambda:=-\Delta-f^{\prime}\left(W_\lambda\right)$ the linearization around a single bubble, $W_{\lam}$ and set $\LL:=\LL_{1}.$ Regarding the spectrum of $\LL$, ~\cite[Proposition 5.5]{DM08} showed that there exists a unique negative simple eigenvalue that we denote by $-\kappa^2<0$ (where $\kappa>0).$ Denote $\calY$ as the normalized (in $L^2$) eigenfunction associated to this eigenvalue. Fix any smooth function $\calZ \in C^{\infty}_c((0, \infty))$ such that the following holds 
\EQ{\label{eq:ZQ} 
  \ang{ \calZ \mid \Lam W} >0   \mand \ang{\calZ \mid \calY} = 0. 
 }
We record here the following energy expansion around a multi-bubble configuration whose proof is similar to the proof of Lemma 2.15 in \cite{lawrie-wave}.
\begin{lem}\label{lem:M-bub-energy} Let $M \in \N$. Then for any $\theta>0$, there exists a constant $\eta>0$ such that the following holds. Let $ \calW(\vec \iota, \vec \lam)$ be an $M$-bubble configuration such that
\EQ{
\sum_{j =1}^{M-1} \Big( \frac{ \lam_{j}}{\lam_{j+1}} \Big)^{\frac{D-2}{2}} \le \eta.
}
Then, 
\EQ{
  \Big|  E( \calW( \vec \iota, \vec \lam))  - M E(  W) +  \frac{(D(D-2))^{\frac{D}{2}}}{D} \sum_{j =1}^{M-1} \iota_j \iota_{j+1}  \Big( \frac{ \lam_{j}}{\lam_{j+1}} \Big)^{\frac{D-2}{2}}  \Big| \le \te \sum_{j =1}^{M-1} \Big( \frac{ \lam_{j}}{\lam_{j+1}} \Big)^{\frac{D-2}{2}} .
}
Furthermore, there exists a uniform constant $C>0$ such that for any $g \in \E$, 
\EQ{
\abs{\ang{\mathrm{D}E( \calW(\vec \iota, \vec \lam)) \mid g} } \le C \| g \|_{\E} \sum_{j =1}^M \Big( \frac{\lam_{j}}{\lam_{j+1}} \Big)^{\frac{D-2}{2}} . 
}
\end{lem}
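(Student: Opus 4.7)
I would follow the standard multi-bubble energy expansion argument, mirroring the treatment in the wave-map setting \cite{lawrie-wavemaps} and the harmonic-map heat flow setting \cite{lawrie-harmonic-map}. Set $\calW = \calW(\vec\iota,\vec\lam) = \sum_{j=1}^M \iota_j W_{\lam_j}$. The strategy is to expand the kinetic and nonlinear parts of $E(\calW)$ separately, tracking the contributions on dyadic regions adapted to the scale hierarchy $\lam_1 \ll \lam_2 \ll \cdots \ll \lam_M$, and then to identify the leading-order interaction coming from nearest-neighbour pairs.

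First, expand
\[
\int_0^\infty (\p_r \calW)^2\, r^{D-1}\,\ud r = \sum_{j,k=1}^M \iota_j \iota_k \int_0^\infty \p_r W_{\lam_j}\, \p_r W_{\lam_k}\, r^{D-1}\,\ud r.
\]
Integrating by parts and invoking the ground-state equation $-\Delta W_{\lam_j} = f(W_{\lam_j}) = W_{\lam_j}^{(D+2)/(D-2)}$ recasts each bilinear pairing as $\int W_{\lam_{\min(j,k)}}^{(D+2)/(D-2)} W_{\lam_{\max(j,k)}}\, r^{D-1}\,\ud r$. The diagonal $j=k$ contributions combine later with the diagonal piece of the nonlinear expansion to produce $M E(W)$.

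For the nonlinear part, apply the pointwise Taylor inequality
\[
\Big||a+b|^{\frac{2D}{D-2}} - |a|^{\frac{2D}{D-2}} - \tfrac{2D}{D-2}|a|^{\frac{4}{D-2}} a\, b\Big| \lesssim |a|^{\frac{4}{D-2}} b^2 + |b|^{\frac{2D}{D-2}}
\]
on each dyadic annulus $r\sim \lam_k$, taking $a$ equal to the dominant bubble $\iota_k W_{\lam_k}$ and $b$ to the sum of the remaining bubbles. The linear-in-$b$ cross term partially cancels the kinetic cross terms, and the net neighbour interaction between $(j,j+1)$ reduces, up to lower order, to an explicit multiple of $\iota_j\iota_{j+1}\int W_{\lam_j}^{(D+2)/(D-2)} W_{\lam_{j+1}}\,r^{D-1}\,\ud r$. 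The key asymptotic evaluation
\[
\int_0^\infty W_{\lam_j}^{\frac{D+2}{D-2}}(r)\, W_{\lam_{j+1}}(r)\, r^{D-1}\,\ud r = (1+o(1))\, W_{\lam_{j+1}}(0)\int_0^\infty W_{\lam_j}^{\frac{D+2}{D-2}}\, r^{D-1}\, \ud r,
\]
as $\lam_j/\lam_{j+1}\to 0$, combined with $W_{\lam_{j+1}}(0)=\lam_{j+1}^{-(D-2)/2}$ and the scaling identity for $\int W_{\lam_j}^{(D+2)/(D-2)} r^{D-1}\,\ud r$, produces the factor $(\lam_j/\lam_{j+1})^{(D-2)/2}$ and, once all constants are collected, the precise coefficient $(D(D-2))^{D/2}/D$ stated in the lemma. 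Non-nearest-neighbour pairs $(j,k)$ with $k\ge j+2$ contribute at most $(\lam_j/\lam_k)^{(D-2)/2} \le \eta^{(D-2)/2}(\lam_j/\lam_{j+1})^{(D-2)/2}$, hence are absorbed into the $\te$ error for $\eta$ small.

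For the derivative bound, since each bubble satisfies $-\Delta W_{\lam_j}=f(W_{\lam_j})$, we have
\[
DE(\calW) = -\Delta \calW - f(\calW) = \sum_{j=1}^M \iota_j f(W_{\lam_j}) - f\Big(\sum_{j=1}^M \iota_j W_{\lam_j}\Big).
\]
Apply the pointwise inequality $|f(a+b)-f(a)-f(b)|\lesssim |a|^{4/(D-2)}|b| + |a||b|^{4/(D-2)}$ iteratively on dyadic annuli, pair with $g$ via Hölder's inequality, and invoke the Hardy--Sobolev embedding to convert the resulting $L^{2D/(D+2)}$ bound into $\|g\|_\calE \sum_j (\lam_j/\lam_{j+1})^{(D-2)/2}$. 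The main obstacle is the bookkeeping that isolates the leading-order coefficient in the nonlinear cross-term expansion: one must precisely track the cancellation between the kinetic and potential cross terms and verify that the remainders arising from both the Taylor inequality and the truncation to nearest-neighbour interactions can all be absorbed into $\te \sum (\lam_j/\lam_{j+1})^{(D-2)/2}$ by a suitably small choice of $\eta$.
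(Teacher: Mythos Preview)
Your proposal is correct and follows essentially the same route as the paper: the paper does not give an independent proof but simply refers to Lemma~2.22 in \cite{lawrie-wavemaps}, and your outline reproduces precisely that standard multi-bubble energy expansion (bilinear expansion of the kinetic term via the ground-state equation, Taylor expansion of the nonlinearity on scale-adapted annuli, nearest-neighbour interaction asymptotics, and the $DE(\calW)=\sum\iota_j f(W_{\lam_j})-f(\calW)$ identity for the derivative bound).
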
 
To measure how much we deviate from a multi-bubble configuration, we define the following proximity function.
\begin{defn} \label{def-d} Fix $M$ as in Definition~\ref{defn:multi-bubble} and let $ v \in \E$.  Define, 
\EQ{ \label{eq:d-def} 
\operatorname{dist}_M(  v) := \inf_{\vec \iota, \vec \lam}  \Big( \|  v -  \calW( \vec \iota, \vec \lam) \|_{\E}^2 + \sum_{j =1}^{M-1} \Big( \frac{\lam_{j}}{\lam_{j+1}} \Big)^{\frac{D-2}{2}} \Big)^{\frac{1}{2}},
}
where the infimum is taken over all vectors $\vec \lam = (\lam_1, \dots, \lam_M) \in (0, \infty)^M$ and all $\vec \iota = \{ \iota_1, \dots, \iota_M\} \in \{-1, 1\}^M$. 
\end{defn} 
Using a similar argument as in Lemma 2.17 in \cite{lawrie-wave}, when the value of the proximity function is small and the energy is close to the sum of energies of each bubble, we can find signs $\vec{\io}$ and scales $\vec{\lam}$ that realize the infimum in the above definition.
\begin{lem}\label{lem:mod-static} Let $M \in \N$. There exist constants $\eta, C>0$ such that the following hold. For any $ v \in  \calE$ satisfying
\EQ{ \label{eq:v-M-bub} 
\operatorname{dist}_M(  v)  \le \eta,
}
there exists a unique choice of parameters, $\vec \lam = ( \lam_1, \dots, \lam_M) \in  (0, \infty)^M$, $\vec\iota \in \{-1, 1\}^M$, and $g \in  \calE$, such that for all $j\in \{1,\cdots,M\}$,
\begin{align}
 v &=   \calW(\vec \iota, \vec \lam) +  g, \quad 
   0  = \La \calZ_{\U{\lam_j}} \mid g\Ra  \label{eq:v-decomp} \\  
\operatorname{dist}_M(  v)^2 &\le \|   g \|_{\E}^2  + \sum_{j =1}^{M-1} \Big( \frac{\lam_{j}}{\lam_{j+1}} \Big)^{\frac{D-2}{2}}  \le C \operatorname{dist}_M(v)^2  \label{eq:g-bound-0}.
\end{align}
\end{lem}  
Furthermore, similar to Lemma 2.20 in \cite{lawrie-wave}, if a function $w$ is close to two different multi-bubble configurations, then the scales of those two configurations are also the same up to a small constant. 
\begin{lem}\label{lem:bub-config} There exists $\eta>0$ sufficiently small with the following property. Let  $M, L \in \N$,  $\vec\iota \in \{-1, 1\}^M, \vec \sigma \in \{-1, 1\}^L$, $\vec \lam \in (0, \infty)^M, \vec \mu \in (0, \infty)^L$, and $w\in \E$ satisfying, 
 \begin{align} 
 \| w  -  \calW(  \vec \iota, \vec \lam)\|_{\E}^2  + \sum_{j =1}^{M-1} \Big(\frac{\lam_j}{\lam_{j+1}} \Big)^{\frac{D-2}{2}} \le \eta,  \quad \| w  -  \calW( \vec \sigma , \vec \mu)\|_{\E}^2 +  \sum_{j =1}^{L-1} \Big(\frac{\mu_j}{\mu_{j+1}} \Big)^{\frac{D-2}{2}} \le \eta.  
 \end{align} 
 Then, $M = L$, $\vec \iota = \vec \sigma$. Furthermore, for all $\te>0$, the parameter $\eta>0$ above can be chosen small enough so that
\EQ{ \label{eq:lam-mu-close} 
\max_{j = 1, \dots M} \left| \frac{\lam_j}{\mu_j} - 1 \right| \le  \te.
 }
\end{lem} 
\subsection{Localized Energy Inequalities and Energy Trapping}
Since our argument will rely on energy estimates, we record here some localized energy identities. 
\begin{lem}\label{lem: localized energy density}
Let $I \subset[0, \infty)$ be a time interval and let $\phi:$ $I \times(0, \infty) \rightarrow[0, \infty)$ be a smooth function. Let $u(t) \in \mathcal{E}$ be a solution to \eqref{eqn:NLH} on $I$. Then, for any $t_1,t_2 \in I$ with $t_1<t_2$ we have
\begin{align}\label{eqn:modified loc energy equality}
&\int_0^{\infty} \Tilde{\mathbf{e}}(u(t_2)) \phi^2 r^{D-1}\dr - \int_0^{\infty} \Tilde{\mathbf{e}}(u(t_1))\phi^2 r^{D-1}\dr \\
&= -2 \int_{t_1}^{t_2} \int_0^{\infty}(\partial_t u)^2 \phi^2 r^{D-1}\dr \mathrm{d}t +2 \int_{t_1}^{t_2}\int_0^{\infty} |u|^{p-1}u(\partial_t u) \phi^2 r^{D-1}\dr \mathrm{d}t  \\
&\quad - 4\int_{t_1}^{t_2} \int_0^{\infty} (\partial_r u) (\partial_t u) \phi \partial_r \phi \ r^{D-1}\dr \mathrm{d}t +2\int_{t_1}^{t_2} \int_0^{\infty} \frac{u\partial_t u}{r^2} \phi^2 r^{D-1}\dr \mathrm{d}t\\
   &\quad +2\int_{t_1}^{t_2}\int_0^{\infty}\Tilde{\mathbf{e}}(u(t)) \phi \partial_t \phi \ r^{D-1} \dr \mathrm{d}t.
\end{align}
In particular, if $\partial_t \phi(t,r)\leq 0$ we have
\begin{align}
&\int_0^{\infty} \Tilde{\mathbf{e}}(u(t_2)) \phi^2 r^{D-1}\dr - \int_0^{\infty} \Tilde{\mathbf{e}}(u(t_1)) \phi^2 r^{D-1}\dr \label{eqn:modified energy ineq I}\\
&\leq -\int_{t_1}^{t_2}\int_0^{\infty} (\partial_t u)^2  \phi^2 r^{D-1}\dr \mathrm{d}t + 4 \int_{t_1}^{t_2}\int_0^{\infty} |\partial_r u|^2|\partial_r \phi|^2  r^{D-1}\dr \mathrm{d}t  \\
&\quad +2 \left(\int_{t_1}^{t_2}\int_0^{\infty} |u|^{2p}  \phi^2 r^{D-1}\dr \mathrm{d}t\right)^{1/2}  \left(\int_{t_1}^{t_2}\int_0^{\infty} (\partial_t u)^2  \phi^2 r^{D-1}\dr \mathrm{d}t \right)^{1/2} \\
   &\quad +2 \left(\int_{t_1}^{t_2} \int_0^{\infty} \frac{|u|^2}{r^4}  \phi^2 r^{D-1}\dr \mathrm{d}t\right)^{1/2}  \left(\int_{t_1}^{t_2} \int_0^{\infty}(\partial_t u)^2  \phi^2 r^{D-1}\dr \mathrm{d}t\right)^{1/2},\\
   &\int_0^{\infty} \Tilde{\mathbf{e}}(u(t_2)) \phi^2 r^{D-1}\dr - \int_0^{\infty} \Tilde{\mathbf{e}}(u(t_1))\phi^2 r^{D-1}\dr \leq 4\int_{t_1}^{t_2}\int_0^{\infty} |u|^{2p} \phi^2 r^{D-1}\dr \mathrm{d}t  \label{eqn:absorbing-ineq} \\
&\quad + 4\int_{t_1}^{t_2} \int_0^{\infty} |\partial_r u|^2 
|\partial_r \phi|^2 r^{D-1}\dr \mathrm{d}t +4\int_{t_1}^{t_2} \int_0^{\infty} \frac{|u|^2}{r^4} \phi^2 r^{D-1}\dr \mathrm{d}t,
\end{align}
and
\EQ{
&\int_0^{\infty} \Tilde{\mathbf{e}}(u(t_2))  \phi^2 r^{D-1}\dr - \int_0^{\infty} \Tilde{\mathbf{e}}(u(t_1)) \phi^2 r^{D-1}\dr \leq -2 \int_{t_1}^{t_2}\int_0^{\infty} (\partial_t u)^2  \phi^2 r^{D-1}\dr  \mathrm{d}t  \label{eqn:modified energy ineq II} \\
&\quad + 4 \left(\int_{t_1}^{t_2}\int_0^{\infty} (\partial_t u)^2 \phi^2 (\partial_r \phi)^2 r^{D-1}\dr \mathrm{d}t\right)^{1/2}   \left(\int_{t_1}^{t_2}\int_0^{\infty} (\partial_r u)^2 r^{D-1}\dr \mathrm{d}t \right)^{1/2} \\
&\quad +2 \left(\int_{t_1}^{t_2}\int_0^{\infty} |u|^{2p} \phi^2 r^{D-1}\dr \mathrm{d}t\right)^{1/2}  \left(\int_{t_1}^{t_2}\int_0^{\infty} (\partial_t u)^2 \phi^2 r^{D-1} \dr \mathrm{d}t\right)^{1/2}  \\
&\quad +2 \left(\int_{t_1}^{t_2} \int_0^{\infty} \frac{|u|^2}{r^4} \phi^2 r^{D-1}\dr \mathrm{d}t\right)^{1/2}  \left(\int_{t_1}^{t_2} \int_0^{\infty}(\partial_t u)^2 \phi^2 r^{D-1}\dr \mathrm{d}t\right)^{1/2}.
}
\end{lem}
\begin{proof}
Observe that
\EQ{
&\int_0^{\infty} \Tilde{\mathbf{e}}(u(t_2)) \phi^2 r^{D-1} \dr  - \int_0^{\infty} \Tilde{\mathbf{e}}(u(t_1))\phi^2 r^{D-1} \dr = \int_{t_1}^{t_2}\int_0^{\infty} \partial_t(\Tilde{\mathbf{e}}(u(t)) \phi^2) \ r^{D-1} \dr \mathrm{d}t\\
   &= 2\int_{t_1}^{t_2}\int_0^{\infty} \left(\partial_r u \partial_t \partial_r u+ \frac{u\partial_t u}{r^2} \right)\phi^2 r^{D-1} \dr \mathrm{d}t  + 2\int_{t_1}^{t_2}\int_0^{\infty}\Tilde{\mathbf{e}}(u(t)) \phi \partial_t \phi \ r^{D-1} \dr \mathrm{d}t    \\
   &= -2 \int_{t_1}^{t_2}\int_0^{\infty} (\partial_t u)^2 \phi^2 r^{D-1}\dr \mathrm{d}t  +2 \int_{t_1}^{t_2}\int_0^{\infty} |u|^{p-1}u(\partial_t u) \phi^2 r^{D-1}\dr \mathrm{d}t  \\
   &\quad - 4\int_{t_1}^{t_2}\int_0^{\infty} (\partial_r u) (\partial_t u) \phi \partial_r \phi \ r^{D-1}\dr \mathrm{d}t +2\int_{t_1}^{t_2}\int_0^{\infty} \frac{u\partial_t u}{r^2} \phi^2 r^{D-1}\dr \mathrm{d}t \\
   &\quad +2\int_{t_1}^{t_2}\int_0^{\infty}\Tilde{\mathbf{e}}(u(t)) \phi \partial_t \phi \ r^{D-1} \dr \mathrm{d}t. }
Thus, we have proved \eqref{eqn:modified loc energy equality}. The inequalities \eqref{eqn:modified energy ineq I}, \eqref{eqn:absorbing-ineq}, and \eqref{eqn:modified energy ineq II} now follow by first dropping the last term in \eqref{eqn:modified loc energy equality} since $\partial_t \phi \leq 0$ and then applying Cauchy-Schwarz and Young's inequality in different ways.

\end{proof}
Next, we recall the well known radial Sobolev embedding that gives pointwise control of a radial function in the energy space $\cE.$
\begin{lem}[Radial Sobolev Embedding]
Let $v\in \E.$ Then for $R>0$ we have
\begin{align}\label{eqn:radial-sobolev}
|v(R)| \leq \frac{\|v\|_{\E(R)}}{R^{(D-2)/2}}.
\end{align}
\end{lem}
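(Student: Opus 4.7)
The plan is to prove the bound by a short application of the fundamental theorem of calculus combined with the Cauchy--Schwarz inequality, exploiting that $v \in C_c^\infty(\mathbb{R}_+)$ vanishes identically for all sufficiently large $r$. First I would write
\begin{equation*}
v(R) \;=\; -\int_R^\infty \partial_r v(r)\,\mathrm{d}r,
\end{equation*}
which is legitimate by compact support. Then, to bring in the weight $r^{D-1}$ present in the definition of $\|\cdot\|_{\mathcal{E}(R,\infty)}$, I would split the integrand as $\partial_r v = \bigl[\partial_r v \cdot r^{(D-1)/2}\bigr]\cdot r^{-(D-1)/2}$ and apply Cauchy--Schwarz in $\mathrm{d}r$ on $[R,\infty)$:
\begin{equation*}
|v(R)|^2 \;\leq\; \Big(\int_R^\infty (\partial_r v(r))^2\, r^{D-1}\,\mathrm{d}r\Big)\cdot \Big(\int_R^\infty r^{-(D-1)}\,\mathrm{d}r\Big).
\end{equation*}

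The first factor on the right is dominated by $\|v\|_{\mathcal{E}(R,\infty)}^2$, since the Hardy density $v^2/r^2 \cdot r^{D-1}$ in the definition of the energy norm is non-negative and can simply be dropped. The second factor is the elementary integral
\begin{equation*}
\int_R^\infty r^{-(D-1)}\,\mathrm{d}r \;=\; \frac{1}{(D-2)\, R^{D-2}},
\end{equation*}
valid for $D \geq 3$. Combining these estimates, and using that $\frac{1}{D-2} \leq 2$ for $D \geq 3$, yields the claimed inequality after taking square roots (and explains where the $R$-power and the constant on the right-hand side come from).

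There is essentially no obstacle to this argument. The only ``choice'' is the weight split in Cauchy--Schwarz, and that is forced by demanding the $(\partial_r v)^2$-integral carry the weight $r^{D-1}$ so that it is controlled by $\|v\|_{\mathcal{E}(R,\infty)}$; the Hardy part of the energy norm plays no role in the proof and merely makes the right-hand side larger. The hypothesis $D \geq 3$ is essential precisely for the convergence of the elementary integral $\int_R^\infty r^{-(D-1)}\,\mathrm{d}r$.
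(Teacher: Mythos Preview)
Your argument is correct and establishes
\[
|v(R)| \;\leq\; \frac{1}{\sqrt{D-2}}\cdot\frac{1}{R^{(D-2)/2}}\,\|v\|_{\mathcal{E}(R,\infty)} \;\leq\; \frac{\sqrt{2}}{R^{(D-2)/2}}\,\|v\|_{\mathcal{E}(R,\infty)}.
\]
Note that after taking the square root the $R$-power you obtain is $(D-2)/2$, not $D-2$; the exponent $R^{D-2}$ in the displayed statement is a typo (a scaling check shows it cannot hold for all $R>0$), and the paper's own computation also arrives at $R^{(D-2)/2}$ before the final display. So you have proved the correct inequality, just not the one literally written.

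The route differs from the paper's. The paper writes $R^{(D-1)/2}v(R)^2 = -\int_R^\infty \partial_r\bigl(r^{(D-1)/2}v^2\bigr)\,\mathrm{d}r$, drops the sign-favorable term, and then applies Cauchy--Schwarz in such a way that \emph{both} pieces of the energy norm---the gradient integral and the Hardy integral $\int_R^\infty (v^2/r^2)\,r^{D-1}\,\mathrm{d}r$---enter the bound. Your approach is more elementary: the fundamental theorem of calculus applied to $v$ itself, plus Cauchy--Schwarz, using only the gradient part of $\|\cdot\|_{\mathcal{E}}$. This is shorter and in fact yields the sharper constant $1/\sqrt{D-2}$; the paper's version has the minor advantage of showing how the Hardy term can be exploited, which is consistent with how that term is used elsewhere in the section.
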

\begin{proof}
We recall the proof here for the reader's convenience. By density, we may assume $v$ has compact support. Thus, we get
\begin{align*}
R^{D-2} v^2(R) & =\int_R^{\infty}\left(-\frac{D-2}{r} v^2(r)-2 v \partial_r v\right) r^{D-2} \mathrm{~d} r \\
& \leq \int_R^{\infty} 2|v|\left|\partial_r v\right| r^{D-2} \mathrm{~d} r \leq \int_R^{\infty}\left(\frac{|v|^2}{r^2}+\left|\partial_r v\right|^2\right) r^{D-1} \mathrm{~d} r=\|v\|_{\mathcal{E}(R)}^2,
\end{align*}
which on re-arranging implies \eqref{eqn:radial-sobolev}. 
\end{proof}
\begin{rem}\label{rem:radial-sobolev}
Note that from the above argument if $v\in C^\infty_c((r_1,r_2))$ where $0<r_1<r_2\leq \infty$ then we also have
\EQ{
|v(r)|\leq \frac{1}{r^{(D-2)/2}}\|v\|_{\cE(r_1,r_2)}
}
for all $r\in (r_1,r_2).$
\end{rem}
The coercivity of the nonlinear energy $E$ plays an important role in the proof of Theorem~\ref{thm:main} since localized energy inequalities will imply smallness of the energy, which we would like to transfer to the $\calE$-norm to deduce smallness of $\calE$-norm. This would, in turn, help us prove that the distance function $\bfd(t)$ is small, which is ultimately what we are after. Thus we would like to compare $E(u)$ with $\|u\|_{\calE}.$ This is not always possible, however when $\|u\|_{\calE}$ is small then we can prove such an estimate. 
\begin{lem}[Coercivity of Nonlinear Energy for small $\cE$--norm]\label{lem:basic-trapping}
There exist constants $\delta, C>0$ with the following properties. Let $v\in \E$ be such that $\|v\|_{\cE}\leq \delta$. Then
\begin{align*}
E(v) \simeq \|v\|^2_{\mathcal{E}}.
\end{align*}
\end{lem}
\begin{proof}
The inequality $E(v)\leq \frac{1}{2}\|v\|^2_{\calE}$ follows from the definitions of $E$ and $\calE$ respectively. Thus, we will establish $C\|v\|^2_{\E} \leq E(v)$.

By Sobolev and Hardy's inequality, there exist constants $C_1, C_2>0$
\begin{align*}
\|\partial_r v\|_{L^2} &\leq \|v\|_{\E} \leq (1+C_1)  \|\partial_r v\|_{L^2},\\
\|  v \|^{2^*}_{L^{2^*}} &\leq C_2 \|\partial_r v\|_{L^2}^{2^*} \leq C_2  \|v\|_{\E}^{2^*}.
\end{align*}
Thus we get
\begin{align*}
E(v) &= \frac{1}{2}\int_0^{\infty} |\partial_r v|^2 r^{D-1}\dr - \frac{1}{2^{*}}\int_0^{\infty} |v|^{2^*} r^{D-1} \dr\\
&\geq \frac{1}{2(1+C_1)}\| v\|^2_{\E} - \frac{C_2}{2^{*}}\|v \|^{2^{*}-2}_{\E}\|v \|^{2}_{\E}\\
&\geq \left(\frac{1}{2(1+C_1)} - \frac{C_2}{2^{*}}\delta^{2^{*}-2}\right)\|v \|^{2}_{\E}.
\end{align*}
Therefore, choosing $\delta>0$ small enough, there exists a constant $C>0$ such that
\EQ{
E(v)\geq C \|v\|_{\cE}^2.
}
\end{proof}
From the above Lemma, we see that a small $\calE$-norm yields coercivity for the nonlinear energy. The following lemma shows that the previous lemma can be upgraded to deduce coercivity of the nonlinear energy when we have smallness of the $\E$-norm only on the tail region, i.e., when $r\in (R,\infty)$ for any $R>0.$
\begin{lem}[Trapping on Tails]\label{lem:localized-coercivity}
There exist constants $\delta, C>0$ with the following property. Let $v\in \E$ and $R>0$ be such that $ \|v\|_{\E(R)} \leq \delta$. Then
\begin{align*}
E(v;R,\infty)\geq C \|v\|_{\E(R)}^2.
\end{align*}    
\end{lem}
\begin{proof}
We want to show that there exists a constant $C>0$ such that
\begin{align*}
E(v;R,\infty) \geq C \|v\|_{\E(R)}^2
\end{align*}
which simplifies to 
\begin{align}
    \frac{1}{2}\int_R^\infty |\partial_r v|^2 r^{D-1}\dr  -\frac{1}{2^*}\int_R^\infty |v|^{2^*} r^{D-1}\dr  \geq C \int_R^\infty |\partial_r v|^2r^{D-1}\dr  + C \int_R^\infty \frac{v^2}{r^2}r^{D-1}\dr. \quad  \label{eqn:target}.
\end{align}
Using \eqref{eqn:radial-sobolev} and the smallness of $\cE$--norm, $\|v\|_{\E(R)}\leq \delta$, we can control the nonlinear term $|v|^{2^*}$,
\begin{align}
\frac{1}{2}\int_R^\infty |\partial_r v|^2 r^{D-1}\dr  -\frac{1}{2^*}\int_R^\infty |v|^{2^*} r^{D-1}\dr \geq \frac{1}{2}\int_R^\infty |\partial_r v|^2 r^{D-1}\dr  -\frac{C_1}{2^*}\delta^{2^*-2} \int_R^\infty \frac{v^2}{r^2} r^{D-1}\dr,
\end{align}
where $C_1=2^{2/(D-2)}.$ Therefore on re-arranging \eqref{eqn:target} we need to prove that
\begin{align*}
  \left(\frac{1}{2}-C\right)\int_R^\infty |\partial_r v|^2 r^{D-1}\dr  -\frac{C_1}{2^*}\delta^{2^*-2} \int_R^\infty \frac{v^2}{r^2} r^{D-1}\dr  \geq C \int_R^\infty \frac{v^2}{r^2} r^{D-1}\dr.
\end{align*}
Now we claim that
\begin{align*}
    \int_R^\infty \frac{v^2}{r^2}r^{D-1}\dr \leq C_3 \int_R^\infty |\partial_r v|^2 r^{D-1}\dr,
\end{align*}
where $C_3$ is the same constant as in the original Hardy's inequality and therefore only depends on the dimension $D.$ We prove this as follows. Let $k\geq 0$ then
\begin{align*}
   0 &\leq  \int_R^\infty \left(\frac{v}{r} + k\partial_r v\right)^2 r^{D-1}\dr\\
     &= \int_R^\infty \frac{v^2}{r^2} + k^2|\partial_r v|^2 + 2 k\frac{v}{r} \partial_r v r^{D-1}\dr \\
     &= \int_R^\infty \left(\frac{v^2}{r^2} + k^2|\partial_r v|^2 \right) r^{D-1}\dr + k\int_R^\infty \frac{\partial_r v^2}{r} r^{D-1}\dr\\
     &= \int_R^\infty \left(\frac{v^2}{r^2} + k^2 |\partial_r v|^2 \right) r^{D-1}\dr - (D-2)k \int_R^\infty \frac{v^2}{r^2} r^{D-1}\dr - v^2(R)kR^{D-2}\\
    &= (1-(D-2)k) \int_R^\infty \frac{v^2}{r^2} r^{D-1}\dr + k^2 \int_R^\infty |\partial_r v|^2 r^{D-1}\dr - kv^2(R)R^{D-2}.
\end{align*}
Set $k=\frac{2}{D-2}\geq 0$ and re-arrange the above inequality to get
\begin{align}
 \int_R^\infty \frac{v^2}{r^2} r^{D-1}\dr \leq \frac{4}{(D-2)^2} \int_R^\infty |\partial_r v|^2 r^{D-1}\dr.
\end{align}
Therefore,
\begin{align*}
   &\left(\frac{1}{2}-C\right)\int_R^\infty |\partial_r v|^2 r^{D-1}\dr  -\frac{1}{2^*}\int_R^\infty |v|^{2^*} r^{D-1}\dr\\
   &\geq\left(\frac{1}{2}-C\right)\int_R^\infty |\partial_r v|^2 r^{D-1}\dr  -\frac{C_1}{2^*}\delta^{2^*-2} \int_R^\infty \frac{v^2}{r^2} r^{D-1}\dr\\
   &\geq \left(\frac{C_3^{-1}}{2}-CC_3^{-1}\right)\int_R^\infty \frac{v^2}{r^2} r^{D-1}\dr  -\frac{C_1}{2^*}\delta^{2^*-2} \int_R^\infty \frac{v^2}{r^2} r^{D-1}\dr\\
   &= \left(\frac{C_3^{-1}}{2}-CC_3^{-1} -\frac{C_1 \delta^{2^*-2}}{2}\right) \int_R^\infty \frac{v^2}{r^2} r^{D-1}\dr \\
   &\geq C \int_R^\infty \frac{v^2}{r^2} r^{D-1}\dr
\end{align*}
provided 
\begin{align*}
    \delta^{2^*-2} < \frac{C_3^{-1}(1-2C)-2C}{C_1}
\end{align*}
where $C_3^{-1} = \frac{(D-2)^2}{4}$ and $C_1 = 2^{2/(D-2)}.$ Thus for instance if we choose $C =\frac{C_3^{-1}}{4(1+C_3^{-1})}$ then 
\begin{align}
    \frac{C_3^{-1}(1-2C)}{C_1} - \frac{2C}{C_1} = \frac{C_3^{-1}}{C_1}\left(1-2C\left(\frac{1+C_3^{-1}}{C_3^{-1}}\right)\right) = \frac{C_3^{-1}}{2C_1} 
\end{align}
and so we need $\delta^{2^*-2} < \frac{C_3^{-1}}{2C_1}$ to get a constant $C = \frac{C_3^{-1}}{4(1+C_3^{-1})}$ in the desired inequality \eqref{eqn:target}.
\end{proof}
\begin{lem}[Propagation of small localized $\mathcal{E}$ norm]\label{lem:prop-small-E}
There exist constants $\delta, C>0$ with the following property. Let ${u}(t) \in \mathcal{E}$ be a solution to \eqref{eqn:NLH} with initial data ${u}(0)={u}_0 \in \cE$ on the time interval $I=[0,T_+(u_0))$. Let $0<r_1<r_2<\infty$. Suppose that
\EQ{\label{eqn:small-init-energy}
\left\|{u}_0\right\|_{\mathcal{E}\left(r_1/2, 2r_2\right)} \leq \delta,\quad \sup_{t\in J}\|u(t)\|_{\cE} \lesssim 1,
}
where $J=I \cap [0,\delta r_1^2]$. Then,
\EQ{\label{eqn:small-later-energy}
\sup_{t\in J}\|{u}(t)\|_{\mathcal{E}\left(r_1, r_2\right)} \leq C \delta,
}
\end{lem}

\begin{proof}
Let $\phi(r)$ be a smooth cut-off function such that $\phi\equiv 1$ on $[r_1,r_2]$ and $\phi\equiv 0$ on $(0,r_1/2]\cup [2r_2,\infty).$ Then $\partial_t \phi = 0$ and thus by \eqref{eqn:absorbing-ineq} we have
\EQ{
&\int_0^{\infty} \Tilde{\mathbf{e}}(u(t)) \phi^2 r^{D-1}\dr \lesssim \int_0^{\infty} \Tilde{\mathbf{e}}(u(0))\phi^2 r^{D-1}\dr + \frac{t}{r_1^2} \sup_{s \in J}\int_0^{\infty} \Tilde{\mathbf{e}}(u(s)) \phi^2 r^{D-1}\dr 
}
where we estimate the nonlinear term by \eqref{eqn:radial-sobolev} and the second inequality in \eqref{eqn:small-init-energy} and we also use $|\partial_r \phi|^2\lesssim 1/r_1^2.$ Therefore,
\EQ{
\sup_{t\in J}\int_0^{\infty} \Tilde{\bfe}(u(t)) \phi^2 r^{D-1}\dr \leq C_1 \delta + C_2 \delta \sup_{t\in J}\int_0^{\infty} \Tilde{\mathbf{e}}(u(t)) \phi^2 r^{D-1}\dr.
}
Choosing $\delta \in (0,1)$ small enough allows us to absorb the second term on the right-hand side of the above inequality, yielding the desired estimate.
\end{proof}
\begin{lem}[Short time evolution close to $W$]\label{lem:short-time}
Let $\iota \in\{-1,1\}$. There exist $\delta_0>0$ and a function $\varepsilon_0:\left[0, \delta_0\right] \rightarrow[0, \infty)$ with $\varepsilon_0(\delta) \rightarrow 0$ as $\delta \rightarrow 0$ with the following properties. Let ${v}_0 \in \mathcal{E}$ and let ${v}(t)$ denote the unique solution to \eqref{eqn:NLH} with ${v}(0)={v}_0$. Let $\mu_0, T_0>0$ and suppose that
\EQ{
\left\|{v}_0-\iota {W}_{\mu_0}\right\|_{\mathcal{E}}+\frac{T_0^2}{\mu_0}=\delta \leq \delta_0.
}
Then, $T_0<T_{+}\left({v}_0\right)$ and
\EQ{
\sup _{t \in\left[0, T_0\right]}\left\|{v}(t)-\iota {W}_{\mu_0}\right\|_{\mathcal{E}}<\varepsilon_0(\delta).
}
\begin{proof}
By rescaling, we may assume $\mu_0=1$. The result is then a particular case of the local Cauchy theory, in particular, the continuity of the data to the solution map at ${W}$.    
\end{proof}
\end{lem}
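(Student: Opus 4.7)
The plan is to reduce to the case $\mu_0 = 1$ using the scaling symmetry of \eqref{eqn:NLH}, recognize $\iota W$ as a stationary solution, and then invoke continuous dependence of the flow on initial data in $\mathcal{E}$ at this reference.

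First I would apply the energy-critical rescaling $v(t, r) \mapsto \mu_0^{(D-2)/2} v(\mu_0^2 t, \mu_0 r)$, which is a symmetry of \eqref{eqn:NLH}, preserves the $\mathcal{E}$-norm, and sends $\iota W_{\mu_0}$ to $\iota W$. Under this rescaling the hypothesis on $(v_0, T_0, \mu_0)$ becomes a hypothesis of exactly the same form with $\mu_0 = 1$, so it suffices to prove the lemma in that case. Next I would note that, since $f(z) = |z|^{4/(D-2)} z$ is odd, $\Delta(\iota W) + f(\iota W) = \iota(\Delta W + f(W)) = 0$, so $w(t) \equiv \iota W$ is a globally defined finite-energy stationary solution of \eqref{eqn:NLH}. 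The claim therefore reduces to comparing the actual solution $v(t)$ with this stationary reference over the short time window $[0, T_0]$, starting from a nearby initial datum with $\|v_0 - \iota W\|_{\mathcal{E}} \le \delta$.

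I would then invoke the local Cauchy theory in $\mathcal{E}$ from the Local well-posedness Lemma (cf.~\cite{brezis-cazenave, roxanas}). Since $w(t) \equiv \iota W$ exists globally, continuity of the data-to-solution map at $\iota W$ yields, for every fixed $T^* > 0$ and every $\eta > 0$, a threshold $\delta'(T^*, \eta) > 0$ such that $\|v_0 - \iota W\|_{\mathcal{E}} < \delta'$ forces $T_{+}(v_0) > T^*$ and $\sup_{t \in [0, T^*]} \|v(t) - \iota W\|_{\mathcal{E}} < \eta$. After shrinking $\delta_0$ so that the (post-rescaling) time length is bounded by some convenient $T^*$, I would define $\epsilon_0(\delta) := \inf\{\eta > 0 : \delta'(T^*, \eta) \ge \delta\}$, producing a nondecreasing function with $\epsilon_0(\delta) \to 0$ as $\delta \to 0^+$ that satisfies the conclusion.

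The only potentially delicate point in this scheme is ensuring uniform continuity of the data-to-solution map at the nonzero reference $\iota W \in \mathcal{E}$, but this is built into the standard local well-posedness theory in the energy space quoted earlier; in particular, no further energy trapping or modulation analysis should be required at this step, as it would be for estimates over long time intervals.
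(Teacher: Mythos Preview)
Your proposal is correct and follows essentially the same approach as the paper: rescale to $\mu_0=1$ and then invoke the continuity of the data-to-solution map at the stationary solution $\iota W$ from the local Cauchy theory. Your write-up is simply a more detailed version of the paper's two-line argument.
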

\begin{lem}[Localized short time evolution close to $W$]\label{lem:loc-short-time}
Let $\iota \in\{-1,1\}$. There exist $\delta_0>0$ and a function $\varepsilon_0:\left[0, \delta_0\right] \rightarrow[0, \infty)$ with $\varepsilon_0(\delta) \rightarrow 0$ as $\delta \rightarrow 0$ with the following properties. 

Let ${u}_0 \in \mathcal{E}, T_0<T_{+}\left({u}_0\right)$, $T_0\leq \delta_0 r_1^2$. Let ${u}(t)$ denote the unique solution to \eqref{eqn:NLH} with ${u}(0)={u}_0$ and assume that $\sup_{t\in [0,T_0]}\|u(t)\|_{\cE}\lesssim 1.$ Consider, $\mu_0>0,0<r_1<r_2<\infty$ and suppose that
\EQ{\label{eqn:scale-assumption}
\left\|{u}_0-\iota {W}_{\mu_0}\right\|_{\mathcal{E}\left(r_1/2,2r_2 \right)}+\frac{T_0^2}{\mu_0}=\delta \leq \delta_0.
}
Then,
\EQ{
\sup_{t\in [0,T_0]}\left\|{u}(t)-\iota {W}_{\mu_0}\right\|_{\mathcal{E}\left(r_1,r_2\right)}<\varepsilon_0(\delta).
}
\end{lem}
\begin{proof}
Consider ${v}_0:=\phi {u}_0+(1-\phi) \iota {W}_{\mu_0}$, where $\phi$ is the same function cut-off function as in the proof of Lemma \ref{lem:prop-small-E}. Then 
\EQ{
\|v_0-\iota W_{\mu_0}\|_\E = \|\phi\left(u_0-\iota W_{\mu_0}\right)\|_{\E}\leq \|u_0-\iota W_{\mu_0}\|_{\E(r_1/2,2r_2)}\leq \delta.
}
For $\delta\leq \delta_0$, applying Lemma \ref{lem:short-time} we get
\EQ{\label{eqn:v is close to W}
\sup_{t\in [0,T_0]}\|v(t)-\io W_{\mu_0}\|_{\cE} \leq \veps_0(\delta).
}
 Since $u(t)-\io W_{\mu_0}=u(t)-v(t) + v(t)-\io W_{\mu_0},$ it suffices to estimate the difference $w(t)=u(t)-v(t).$ To this end  observe that $w$ solves
 \EQ{
 \partial_t w &= \Delta w + f\\
 w(0) &= (1-\phi)(u_0-\io W_{\mu_0}),
 }
where $f:=|u|^{p-1}u-|v|^{p-1}v$ and $p=2^*-1$. Then since $\partial_t \phi \equiv 0$ we have
\EQ{
&\int_0^{\infty} \Tilde{\mathbf{e}}(w(t)) \phi^2 r^{D-1} \dr  - \int_0^{\infty} \Tilde{\mathbf{e}}(w(0))\phi^2 r^{D-1} \dr = \int_{0}^{t}\int_0^{\infty} \partial_t(\Tilde{\mathbf{e}}(w(t)) \phi^2) \ r^{D-1} \dr \mathrm{d}t\\
   &= 2\int_{0}^{t}\int_0^{\infty} \left(\partial_r w \partial_t \partial_r w+ \frac{w\partial_t w}{r^2} \right)\phi^2 r^{D-1} \dr \mathrm{d}t  + 2\int_{0}^{t}\int_0^{\infty}\Tilde{\mathbf{e}}(w(t)) \phi \partial_t \phi \ r^{D-1} \dr \mathrm{d}t    \\
   &= -2 \int_{0}^{t}\int_0^{\infty} (\partial_t w)^2 \phi^2 r^{D-1}\dr \mathrm{d}t  +2 \int_{0}^{t}\int_0^{\infty} f(\partial_t w) \phi^2 r^{D-1}\dr \mathrm{d}t  \\
   &\quad - 4\int_{0}^{t}\int_0^{\infty} (\partial_r w) (\partial_t w) \phi \partial_r \phi \ r^{D-1}\dr \mathrm{d}t +2\int_{0}^{t}\int_0^{\infty} \frac{w\partial_t w}{r^2} \phi^2 r^{D-1}\dr \mathrm{d}t.
}
Therefore using Young's inequality and $|\partial_r \phi|\lesssim 1/r_1$ we get
\EQ{\label{eqn:integral-inequality}
 \int_0^{\infty} \Tilde{\mathbf{e}}(w(t)) \phi^2 r^{D-1} \dr 
 &\lesssim \int_0^{\infty} \Tilde{\mathbf{e}}(w(0))\phi^2 r^{D-1} \dr +\int_0^t \int_0^\infty |f|^{2} \phi^2 r^{D-1}\dr \ud s\\
 &\quad + \frac{t}{r_1^2} \sup_{0\leq s\leq t} \int_0^\infty \Tilde{\mathbf{e}}(w(s)) \phi^2 r^{D-1}\dr\\
 &\leq C_1\delta  + C_2 \delta \sup_{0\leq s\leq t} \int_0^\infty \Tilde{\mathbf{e}}(w(s)) \phi^2 r^{D-1}\dr,
}
where in the last inequality, we estimate $f$ as follows
\EQ{
r^4 |f|^2 &= r^4 ||u|^{p-1}u-|v|^{p-1}v|^2 \\
&\leq  r^4 |u-v|^2 ||u|^{p-1} +|v|^{p-1}|^2 \\
&=  |w|^2 | r^2 |u|^{p-1} +r^2 |v|^{p-1}|^2 \\
&\lesssim  |w|^2. 
}
To obtain the last inequality in the above display we use \eqref{eqn:radial-sobolev} to estimate $r^2 |u|^{p-1}$ and $r^2 |v|^{p-1}$ by $\|u(t)\|_{\cE}$ and $\|v(t)\|_{\cE}$ respectively for all $t\in (0, T_0]$. The first quantity is bounded by assumption, while the second quantity can be estimated by \eqref{eqn:v is close to W} and $T_0^2/\mu_0 \leq \delta.$ Thus, for $\delta\leq \delta_0$ small enough, we can absorb the second term in the RHS of \eqref{eqn:integral-inequality} to deduce that $\|w(t)\|_{\cE(r_1,r_2)}\lesssim \delta$ for all $t\in (0, T_0]$. Therefore,  
\EQ{
\|u(t)-\io W_{\mu_0}\|_{\cE(r_1,r_2)} \lesssim \|w(t)\|_{\cE(r_1,r_2)}+\|v(t)-\io W_{\mu_0}\|_{\cE(r_1,r_2)} \leq \veps_0(\delta),
}
which gives us the desired result.
\end{proof}
\begin{lem}\label{lem:un-seq}
If $\iota_n \in\{-1,0,1\}, 0<t_n^2 < r_n \ll \mu_n\ll R_n$ and $\{u_n\}_{n\in \N}$ is a sequence of solutions of \eqref{eqn:NLH} such that ${u}_n(t)$ is defined for $t \in\left[0, t_n\right]$, $\sup_{t\in [0,t_n]}\|u_n(t)\|_{\calE}\lesssim 1$ and
\EQ{
\lim _{n \rightarrow \infty}\left\|{u}_n(0)-\iota_n {W}_{\mu_n}\right\|_{\mathcal{E}\left(r_n/2, 2R_n\right)}=0,
}
then
\EQ{
\lim _{n \rightarrow \infty} \sup _{t \in\left[0, t_n\right]}\left\|{u}_n(t)-\iota_n {W}_{\mu_n}\right\|_{\mathcal{E}\left(r_n, R_n\right)}=0 .
}
\end{lem}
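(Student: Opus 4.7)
The plan is to invoke Lemma \ref{lem:loc-short-time} (and Lemma \ref{lem:prop-small-E} in the degenerate case $\iota_n = 0$) applied to each $u_n$ with the natural parameter choice $\mu_0 = \mu_n$, $r_1 = r_n$, $r_2 = R_n$, $T_0 = t_n$, and $\iota = \iota_n$. The sequential statement then reduces to checking that the quantitative hypotheses of Lemma \ref{lem:loc-short-time} are satisfied for $n$ sufficiently large, so that its conclusion propagates into the asymptotic statement of the present lemma.

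To carry this out, I would introduce the combined small parameter
\[
\delta_n := \|u_n(0) - \iota_n W_{\mu_n}\|_{\mathcal{E}(r_n/2,\, 2R_n)} + \frac{t_n^2}{\mu_n}.
\]
The first summand is $o(1)$ by the hypothesis of the lemma, while the second obeys $t_n^2/\mu_n < r_n/\mu_n \to 0$ since $t_n^2 < r_n \ll \mu_n$; hence $\delta_n \to 0$. The remaining parabolic compatibility condition $T_0 \le \delta_n r_n^2$ in Lemma \ref{lem:loc-short-time} reflects that the time horizon is small relative to the parabolic scale $r_n^2$ of the inner radius, and holds for $n$ large thanks to the scale ordering $t_n^2 < r_n \ll \mu_n \ll R_n$. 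Lemma \ref{lem:loc-short-time} then produces a modulus $\epsilon_0(\delta_n) \to 0$ with
\[
\sup_{t \in [0, t_n]} \|u_n(t) - \iota_n W_{\mu_n}\|_{\mathcal{E}(r_n, R_n)} \le \epsilon_0(\delta_n),
\]
which yields the desired conclusion on letting $n \to \infty$.

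The substantive analytic content is already carried by Lemma \ref{lem:loc-short-time} itself: since the heat flow lacks finite speed of propagation, the transfer of initial smallness on the enlarged annulus $[r_n/2, 2R_n]$ to later-time smallness on the strictly interior annulus $[r_n, R_n]$ must be effected through the localized energy identities of Lemma \ref{lem: localized energy density}, applied to a smooth cutoff adapted to the annulus, together with the parabolic smoothing that prevents contamination from outside $[r_n/2, 2R_n]$ from reaching the interior within the short time $t_n$. The main obstacle that remains here is therefore not sequential bookkeeping but the careful verification of the parabolic time-scale compatibility encoded by the hypothesis; once this is in hand, the rest is a direct application of the previously established machinery.
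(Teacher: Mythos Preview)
Your proposal is correct and takes essentially the same approach as the paper, which simply states that the result is a direct consequence of Lemma~\ref{lem:prop-small-E} when $\iota_n = 0$ and Lemma~\ref{lem:loc-short-time} when $\iota_n \in \{-1,1\}$. Your write-up supplies the parameter-matching and the verification that $\delta_n \to 0$ that the paper leaves implicit; note, however, that the compatibility $T_0 \le \delta_n r_1^2$ you flag really requires the parabolic ordering $t_n \ll r_n^2$ (equivalently $\sqrt{t_n} \ll r_n$, as in the application in Lemma~\ref{lem:collision-duration}) rather than the literal $t_n^2 < r_n$ appearing in the statement.
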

\begin{proof}
This is a direct consequence of Lemma \ref{lem:prop-small-E} when $\iota_n=0$ and Lemma \ref{lem:loc-short-time} when $\iota_n \in$ $\{-1,1\}$.
\end{proof}

\section{Sequential bubbling for finite time blow-up solutions}\label{sec:finite-time}
The goal of this section is to establish sequential soliton resolution when $T_+<\infty.$ To this end, we first establish a lemma that shows strong convergence to the weak limit outside the origin and convergence of the nonlinear energy on compact sets near the origin.
\begin{lem}\label{lem:body-map}
Let $u_0 \in \mathcal{E}$ and let $u(t)$ be a solution to \eqref{eqn:NLH} with $u(0)=u_0$. Suppose that $T_{+}=T_+\left(u_0\right)<\infty$, and $\sup_{t\in[0,T_+)}\|u(t)\|_{\cE}<\infty$. Then, there exists a unique function $u^* \in \mathcal{E}$ such that for any $r_0>0$,
\begin{align}\label{eqn:smooth-conv-out-origin}
\lim _{t \rightarrow T_+}\left\|u(t)-u^*\right\|_{\mathcal{E}\left( r_0\right)}=0 .  
\end{align}
Moreover, there exists a finite constant $L\in \R$ such that for each $r_0 \in(0, \infty]$,
\begin{align}\label{eqn:finite-num-bubble}
\lim _{t \rightarrow T_{+}} \int_0^{r_0} \bfe(u(t)) r^{D-1}\ud r &=L+\int_0^{r_0} \bfe(u^*) r^{D-1}\ud r 
\end{align}
In particular,
\EQ{
\lim_{r_0\to 0}\lim _{t \rightarrow T_{+}} \int_0^{r_0} \bfe(u(t)) r^{D-1}\ud r &=L.
}
\end{lem}
\begin{rem}
See the works of Struwe \cite{struwe} and Qing \cite{qing} for analogous results for the harmonic map heat flow.
\end{rem}

\begin{proof}[Proof of Lemma \ref{lem:body-map}]
We first show \eqref{eqn:smooth-conv-out-origin}. Define the blow-up or the singular set
\begin{align}\label{eqn:sing-set}
\mathcal{S}=\left\{x \in \R^D: \lim _{R \rightarrow 0} \liminf _{t \rightarrow T_{+}} \int_{B_R(x) }\left|u(t)\right|^{p+1} \ud y \geq \varepsilon_0\right\},
\end{align}
where $\varepsilon_0>0$ is a constant coming from the $\varepsilon$--regularity established in Theorem 2.1 in \cite{duShi}. Observe that when $u$ is radially symmetric then $\mathcal{S}=\{0\}$ since by radial Sobolev embedding \eqref{eqn:radial-sobolev} and $\sup_{t\in [0,T_+)}\|u(t)\|_{\cE}<\infty$ we have that $u\in L^\infty$ on the region $(r_0,\infty)\times [0,T_+)$ for any $r_0>0$. By the standard parabolic regularity, we have convergence in $\dot{H}^1 \cap C^0$ of $u(t)$ away from the origin as $t \rightarrow T_{+}$. Let $u^*$ be this limit. Then $u^*$ is a continuous function on $\mathbb{R}^D \backslash\{0\}$. Since $u(t)$ is $\dot{H}^1$-bounded, the $\dot{H}^1 \cap C^0$-convergence away from the origin also implies $u^* \in \dot{H}^1$ and $u(t) \rightharpoonup u^*$ in $\dot{H}^1$. By the uniqueness of the weak limit, $u^* \in \dot{H}^1$ is unique.

Furthermore, since $u\in L^\infty$ on the region $[0,T_+)\times (r_0,\infty)$ for any $r_0>0$, standard parabolic regularity theory implies $\dot{H}^1\cap C^0$ convergence to a weak limit $u^*$ away from the origin, implying \eqref{eqn:smooth-conv-out-origin}. Next to see \eqref{eqn:finite-num-bubble}, we argue as in the proof of Proposition 2.1 in \cite{qing}. Denote,
\begin{align}
\limsup _{t \rightarrow T_+} \int_{0}^{r_0} \bfe(u(t)) r^{D-1}\mathrm{d}r &= \lim _{t_i \rightarrow T_+} \int_0^{r_0} \bfe(u(t_i)) r^{D-1}\mathrm{d}r \\
&= M+\int_0^{r_0} \bfe(u^*) r^{D-1}\ud r  \label{eqn:limsup}\\
\liminf_{t \rightarrow T_+} \int_{0}^{r_0} \bfe(u(t)) r^{D-1}\mathrm{d}r &= \lim _{s_i \rightarrow T_+} \int_0^{r_0}\bfe(u(s_i)) r^{D-1}\mathrm{d}r \\
&= m+\int_0^{r_0} \bfe(u^*) r^{D-1}\ud r  \label{eqn:liminf}
\end{align}
It suffices to show $M=m.$ Note that $M$ and $m$ are independent of $r_0$ by \eqref{eqn:smooth-conv-out-origin}. Thanks to \eqref{eqn:smooth-conv-out-origin} we can also assume that $r_0<\infty$. Take $\lambda_j=\frac{r_0}{j} $. Then observe that
\EQ{\label{eqn:lambda_j-energy-expansion}
&\int_0^{\lam_j} \bfe(u(t)) r^{D-1}\ud r = \int_0^{r_0} \bfe(u(t)) r^{D-1}\ud r -\int_{\lam_j}^{r_0} \bfe(u(t)) r^{D-1}\ud r\\
&\quad \geq \int_0^{r_0} \bfe(u(t)) r^{D-1}\ud r -\int_{\lam_j}^{r_0} |\bfe(u(t)) -\bfe(u^*)| r^{D-1}\ud r - \int_{\lam_j}^{r_0} \bfe(u^*) r^{D-1}\ud r.
}
Then since $u(t) \rightarrow u^*$ smoothly outside the origin as $t \rightarrow T_+$, there exist subsequences still denoted by $\{t_{i_j}\}$ and $\{s_{i'_j}\}$ such that
\EQ{\label{eqn:t_i-weak-lim}
\int_{\lam_j}^{r_0} |\bfe(u(t_{i_j})) -\bfe(u^*)| r^{D-1}\ud r \to 0,\quad \text{ as }j\to \infty
}
and 
\EQ{\label{eqn:s_j-weak-lim}
\int_{\lam_j}^{r_0} |\bfe(u(s_{i'_j})) -\bfe(u^*)| r^{D-1}\ud r \to 0,\quad \text{ as }j\to \infty.
}
Denote $t_j=t_{i_j}$ and $s_j=s_{i'_j}$. Up to subsequences we can arrange that $s_j \leq t_j \leq s_{j+1} \leq t_{j+1}$. Then \eqref{eqn:t_i-weak-lim}, \eqref{eqn:lambda_j-energy-expansion} and \eqref{eqn:limsup} implies
\EQ{
\int_0^{\lam_j} \bfe(u(t_{j})) r^{D-1}\ud r &\geq M+\int_0^{r_0} \bfe(u^*)r^{D-1}\ud r - \int_{\lam_j}^{r_0} \bfe(u^*) r^{D-1}\ud r-o_j(1) \\
&\geq M+\int_0^{\lam_j}\bfe(u^*) r^{D-1}\ud r -o_j(1),
}
where $o_j(1)\to 0$ as $j\to \infty.$
Thus,
\EQ{\label{eqn:limsup-seq-lb}
\int_0^{\lam_j}\bfe(u(t_j)) r^{D-1}\mathrm{d} r \geq M-o_j(1).
}
Next, we derive a localized energy inequality for the localized nonlinear energy density $\bfe(u).$ Let $\phi \in C^\infty_c([0,\infty)).$ Then observe that
\begin{align}\label{eqn:modified loc dirichlet equality}
&\int_0^{\infty} {\bfe}(u(t_2)) \phi^2 r^{D-1}\dr - \int_0^{\infty} {\bfe}(u(t_1))\phi^2 r^{D-1}\dr \\
&=  -\int_{t_1}^{t_2} \int_{0}^\infty (\partial_t u)^2\phi^2 r^{D-1}\dr \ud t-2\int_{t_1}^{t_2}\int_{0}^\infty (\partial_r u)(\partial_r \phi) \phi (\partial_t u)  r^{D-1}\dr \ud t.
\end{align}
Then using the above identity with $t_1=s_j, t_2=t_j$ and $\phi$ such that $\phi\equiv 1$ on $[0,\lam_j)$ and $\phi\equiv 0$ on $(\delta,\infty)$ for any $\delta \in (\lam_j,r_0)$ with $|\partial_r \phi|\lesssim (\delta-\lam_j)^{-1}$ we get
\EQ{
\int_0^{\delta} \bfe(u(s_j)) \phi^2 r^{D-1}\dr &\geq \int_0^{\delta} \bfe(u(t_j)) \phi^2 r^{D-1}\dr + \int_{s_j}^{t_j} \int_0^{\infty}(\partial_t u)^2 \phi^2 r^{D-1}\dr \mathrm{d}t \\
&\quad + 2\int_{s_j}^{t_j} \int_0^{\infty} (\partial_r u) (\partial_t u) \phi \partial_r \phi \ r^{D-1}\dr \mathrm{d}t\\
&\geq \int_0^{\lam_j} \bfe(u(t_j)) r^{D-1}\dr +  \int_{\lam_j}^{\delta} \bfe(u(t_j)) \phi^2 r^{D-1}\dr  - \int_{s_j}^{t_j} \int_{\lam_j}^{\delta} |\partial_r u|^2 |\partial_r \phi|^2 r^{D-1}\dr \mathrm{d}t\\
&\geq \int_0^{\lam_j} \bfe(u(t_j)) r^{D-1}\dr -  \int_{\lam_j}^{\delta} |\bfe(u(t_j))-\bfe(u^*)| \phi^2 r^{D-1}\dr \\
&\quad +\int_{\lam_j}^{\delta} \bfe(u^*) \phi^2 r^{D-1}\dr- C_1 \frac{(t_j-s_j)}{(\delta-\lam_j)^2}\\
&\geq \int_0^{\lam_j} \bfe(u(t_j)) r^{D-1}\dr-o_j(1) - o_\delta(1) -  C_1 \frac{(t_j-s_j)}{(\delta-\lam_j)^2},
}
where we used the fact that 
\begin{align}\label{eqn:(**)}
     \int_{\lam_j}^{\delta} |\bfe(u(t_j))-\bfe(u^*)| \phi^2 r^{D-1}\dr &\to 0,\text{ as }j\to \infty\\
     \left|\int_{\lam_j}^{\delta} \bfe(u^*)\right|\lesssim \int_0^\delta ((\partial_r u^*)^2 + |u^*|^{\frac{2D}{D-2}})  r^{D-1} \dr &\to 0,\text{ as } \delta\to 0,\text{ and }\\
     \int_{s_j}^{t_j} \int_{\lam_j}^{\delta} |\partial_r u|^2 |\partial_r \phi|^2 r^{D-1}\dr \mathrm{d}t &\leq C_1 \frac{(t_j-s_j)}{(\delta-\lam_j)^2}
\end{align}
for some constant $C_1>0$ depending on $\sup_{t\in [0,T_+)} \|u(t)\|_{\cE}.$ Therefore,
\EQ{\label{eqn:liminf-to-limsup-prop}
\int_0^{\delta} \bfe(u(s_j)) r^{D-1}\dr &= \int_0^\delta \bfe(u(s_j))\phi^2 r^{D-1}\dr + \int_{\lam_j}^\delta \bfe(u(s_j))(1-\phi^2)r^{D-1}\dr \\
&\geq \int_0^{\lam_j} \bfe(u(t_j)) r^{D-1}\dr-o_j(1) - o_\delta(1)-  C_1 \frac{(t_j-s_j)}{(\delta-\lam_j)^2},
}
where we use \eqref{eqn:s_j-weak-lim} and a similar argument as above to get that
\EQ{
\int_{\lam_j}^\delta \bfe(u(s_j))(1-\phi^2)r^{D-1}\dr \geq \int_{\lam_j}^\delta \bfe(u^*)(1-\phi^2)r^{D-1}\dr-o_j(1) \geq -o_{\delta}(1)-o_j(1).
}
Consequently, sending $s_j \to T_+$ and recalling  in \eqref{eqn:liminf-to-limsup-prop} and using \eqref{eqn:limsup-seq-lb} we get
\EQ{
\lim _{s_j \rightarrow T_+} \int_0^{r_0} \bfe(u(s_j)) r^{D-1}\mathrm{d}r &= \lim _{s_j \rightarrow T_+}\int_\delta^{r_0} \bfe(u(s_j)) r^{D-1}\mathrm{d}r+ \lim _{s_j \rightarrow T_+}\int_0^\delta  \bfe(u(s_j)) r^{D-1}\mathrm{d}r \\
&\geq M- o_{\delta}(1) + \int_\delta^{r_0} \bfe(u^*) r^{D-1}\mathrm{d} r.
}
Taking $\delta \rightarrow 0$, and recalling \eqref{eqn:liminf}, we can conclude that $m=M$. Thus, the limit $L=M$ exists in \eqref{eqn:finite-num-bubble}. Furthermore, $L$ is finite due to the fact that $\sup_{t\in [0,T_+)}\|u(t)\|_{\calE}<\infty.$

\end{proof}
\begin{prop}[Sequential bubbling for solutions that blow up in finite time]  \label{prop:seq-ftbu} 
Let $u_0 \in \E$, and let $u(t)$ denote the solution to~\eqref{eqn:NLH} with initial data $u_0$ with bounded $\calE$-norm. Suppose that $T_+(u_0) < \infty$. Then, there exist a unique function $u^* \in \E$,  a unique integer $N \ge 1$, a sequence of times $t_n \to T_+$, signs $\vec\iota \in \{-1, 1\}^N$, a sequence of scales $\vec \lam_n \in (0, \infty)^N$, and an error $g_n$ defined by 
\EQ{
u(t_n) = \sum_{j =1}^N \iota_j W_{\lam_{n,j}} + u^* + g_n,
}
with the following properties: 
\begin{itemize} 
\item[(i)] The integer $N \ge 1$ and a weak limit $u^*$ satisfy, 
\EQ{ \label{eq:energy-limit} 
\lim_{t \to T_+} E( u(t)) = N E(W)  + E( u^*); 
}
\item[(ii)] for any $0<\al<A$ 
\begin{align}
&\lim_{t \to T_+} E( u(t); 0, \al(T_+ - t)^{\frac{1}{2}}) = NE(W), \label{eq:N-bubbles-bu} \\
&\lim_{t\to T_+} E(u(t); \al (T_+ -t)^{\frac{1}{2}}, A(T_+ -t)^{\frac{1}{2}}) = 0,
\label{eq:en-annulus}\\
&\lim_{t \to T_+} E( u(t) - u^*; \al(T_+ - t)^{\frac{1}{2}}, \infty) = 0, \label{eq:en-ext-bu}     
\end{align}
and there exist $0< T_0 < T_+$ and function $\rho : [T_0, T_+) \to (0,\infty)$ satisfying, 
\begin{equation} \label{eq:radiation} 
\lim_{t \to T_+} \Big( \frac{\rho(t)}{\sqrt{T_+-t}} + \| u(t) -  u^*\|_{\cE(\rho(t))}\Big) = 0; 
\end{equation}
\item[(iii)] 
the error $g_n$ and the scales $\vec \lam_n$ satisfy, 
\EQ{ \label{eq:d(t_n)} 
\lim_{ n \to \infty} \Big( \| g_n \|_{\E}^2 + \sum_{j =1}^N \Big( \frac{\lam_{n, j}}{\lam_{n, j+1}} \Big)^{\frac{D-2}{2}}  \Big)^{\frac{1}{2}}  = 0 , 
}
where here we adopt the convention that $\lam_{n, N+1} := (T_+ - t_n)^{\frac{1}{2}}$. 
\end{itemize} 
\end{prop}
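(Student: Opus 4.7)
The plan is to combine Lemma~\ref{lem:loc-seq}(ii) with the compactness lemma, Lemma~\ref{lem:compactness}, in order to extract a multi-bubble configuration on the region $\{r \le R_n\sqrt{T_+-t_n}\}$ along some sequence $t_n\to T_+$, and to glue in the body map $u^*$ of Lemma~\ref{lem:body-map} to cover the exterior. The continuous-in-time statements in~(ii) are then upgraded from their sequential counterparts via the localized energy identity of Lemma~\ref{lem: localized energy density}, the short-time propagation Lemmas~\ref{lem:prop-small-E}--\ref{lem:un-seq}, and the global tension bound~\eqref{eq:tension-L2}.

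Concretely, Lemma~\ref{lem:loc-seq}(ii) followed by Lemma~\ref{lem:compactness} yields, after passing to a subsequence, a fixed pair $(N,\vec\iota)$ and scales $\vec\lambda_n \in (0,\infty)^N$ with $\lambda_{n,N} \le C_0\sqrt{T_+-t_n}$ such that
\[
\|u(t_n) - \mathcal{W}(\vec\iota,\vec\lambda_n)\|_{\mathcal{E}(r\le R_n\sqrt{T_+-t_n})}^2 + \sum_{j=1}^{N-1}\Big(\frac{\lambda_{n,j}}{\lambda_{n,j+1}}\Big)^{\frac{D-2}{2}} \to 0.
\]
Set $g_n := u(t_n) - \sum_j \iota_j W_{\lambda_{n,j}} - u^*$ and split the $\mathcal{E}$-norm at an auxiliary small scale $r_0>0$: the outer piece of $g_n$ vanishes from $\|u(t_n)-u^*\|_{\mathcal{E}(r_0)}\to 0$ (Lemma~\ref{lem:body-map}) together with the decay of $W_{\lambda_{n,j}}$ outside its scale, while the inner piece vanishes from the compactness estimate, absolute continuity of $\|u^*\|_{\mathcal{E}(r\le r_0)}$, and standard bubble-tail estimates, after first sending $n\to\infty$ and then $r_0\to 0$. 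The energy identity in~(i) follows by combining this decomposition with the energy expansion Lemma~\ref{lem:M-bub-energy}, giving $E(u(t_n))\to NE(W)+E(u^*)$; matching with $L+E(u^*)$ from Lemma~\ref{lem:body-map} forces $L=NE(W)$. If $N=0$ then $u(t_n)\to u^*$ in $\mathcal{E}$ and Lemma~\ref{lem:short-time} applied around $u^*$ would extend the flow past $T_+$, contradicting $T_+<\infty$; hence $N\ge 1$.

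For the continuous-in-time statements in~(ii), I would compare $u(t)$ for $t$ near $T_+$ with a nearby $u(t_n)$ by applying Lemma~\ref{lem: localized energy density} against a cutoff supported in the annulus $[\alpha'\sqrt{T_+-t}, A'\sqrt{T_+-t}]$, using that the dissipation $\int_{t_n}^{t}\|\partial_t u\|_{L^2}^2\,\mathrm{d}s$ vanishes by~\eqref{eq:tension-L2}; this transfers \eqref{eq:N-bubbles-bu} and \eqref{eq:en-annulus} from the sequence to all $t$, and \eqref{eq:en-ext-bu} then follows by subtraction from~(i). The radiation scale $\rho(t)$ in~\eqref{eq:radiation} is constructed diagonally: for each $\varepsilon>0$ the fixed-$r_0$ body-map convergence produces $\tau_\varepsilon<T_+$ and $\rho_\varepsilon>0$ with $\rho_\varepsilon/\sqrt{T_+-\tau_\varepsilon}<\varepsilon$ and $\|u(\tau_\varepsilon)-u^*\|_{\mathcal{E}(\rho_\varepsilon)}<\varepsilon$, which are interpolated to a continuous $\rho(\cdot)$ on $[T_0,T_+)$.

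The main obstacle, sitting underneath both \eqref{eq:d(t_n)} (through the convention $\lambda_{n,N+1}=\sqrt{T_+-t_n}$) and \eqref{eq:radiation}, is to prove $\lambda_{n,N}/\sqrt{T_+-t_n}\to 0$, i.e.\ that no bubble persists at the parabolic scale. The natural approach is to rescale by $\lambda_{n,N}$, producing solutions $v_n(\tau,r):=\lambda_{n,N}^{(D-2)/2} u(t_n+\tau\lambda_{n,N}^2,\lambda_{n,N}r)$ whose initial data $v_n(0,\cdot)\to \iota_N W$ in $\mathcal{E}(\alpha,A)$ on every bounded annulus $0<\alpha<A<\infty$ (the inner bubbles concentrate at the origin in the rescaled picture while $u^*$ and $g_n$ vanish on bounded annuli after rescaling), and to combine the vanishing of the rescaled dissipation $\int_0^{T_0}\|\partial_\tau v_n\|_{L^2}^2\,\mathrm{d}\tau=\int_{t_n}^{t_n+T_0\lambda_{n,N}^2}\|\partial_t u\|_{L^2}^2\,\mathrm{d}t\to 0$ (from \eqref{eq:tension-L2}) with an iterated application of Lemma~\ref{lem:loc-short-time} to force $v_n$ to remain close to $\iota_N W$ on a definite rescaled interval and, in particular, prevent $u$ from blowing up at $T_+$ under the hypothesis $\lambda_{n,N}/\sqrt{T_+-t_n}\not\to 0$; this contradicts the $L^\infty$ blow-up criterion from the local Cauchy theory and closes the argument.
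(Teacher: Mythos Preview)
Your overall architecture is close to the paper's, but the step you single out as ``the main obstacle''---proving $\lambda_{n,N}/\sqrt{T_+-t_n}\to 0$---is handled by a rescaling/extension argument that does not close. After rescaling by $\lambda_{n,N}$, the solution $v_n$ is only close to $\iota_N W$ on bounded \emph{annuli}: the inner bubbles $W_{\lambda_{n,j}/\lambda_{n,N}}$ for $j<N$ concentrate at the origin, and the rescaled body map $\lambda_{n,N}^{(D-2)/2}u^*(\lambda_{n,N}\cdot)$ carries the full energy of $u^*$ at infinity. Thus $\|v_n(0)-\iota_N W\|_{\mathcal E}$ is not small, so Lemma~\ref{lem:short-time} does not apply; and Lemma~\ref{lem:loc-short-time} only propagates closeness in an annulus, which cannot preclude $L^\infty$ blow-up caused by the concentrating inner bubbles at $r=0$. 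So the contradiction with the blow-up criterion is not reached.

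The paper avoids this entirely by reversing your order in~(ii). It first proves, using only Lemma~\ref{lem:body-map} and the localized energy inequality~\eqref{eqn:modified loc energy equality} (no bubble information at all), that for fixed $r_0$
\[
\lim_{t\to T_+}\widetilde E\bigl(u(t);\alpha(T_+-t)^{1/2},r_0\bigr)=\widetilde E(u^*;0,r_0),
\]
from which \eqref{eq:en-ext-bu} and then the annulus vanishing \eqref{eq:en-annulus} follow directly. With the continuous-in-time annulus vanishing in hand, the decomposition~\eqref{eq:decomp-0} forces $\lambda_{n,N}/\sqrt{T_+-t_n}\to 0$ immediately: a bubble at scale comparable to $\sqrt{T_+-t_n}$ would deposit a fixed amount of $\widetilde E$-energy in the annulus $[\alpha\sqrt{T_+-t_n},A\sqrt{T_+-t_n}]$. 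The identity \eqref{eq:N-bubbles-bu} and the quantization $L=NE(W)$ then follow. Note also that your diagonal construction of $\rho(t)$ from the fixed-$r_0$ body-map convergence cannot give $\rho(t)\ll\sqrt{T_+-t}$ (since $r_0/\sqrt{T_+-t}\to\infty$); one really needs \eqref{eq:en-ext-bu} first, and then diagonalizes over $\alpha$.
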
 
\begin{proof}
Let $u(t) \in \E$ be a solution to \eqref{eqn:NLH} blowing up at time $T_+ \in (0,\infty)$. By ~\eqref{eq:tension-L2} we can find a sequence $t_n \to T_+$ so that, 
\EQ{
(T_+ - t_n)^{\frac{1}{2}} \| \calT( u(t_n)) \|_{L^2}  \to 0 \mas n \to \infty. 
}
Applying Lemma~\ref{lem:compactness} with $\rho_n:= (T_+ - t_n)^{\frac{1}{2}}$, yields $N \ge 0$, $\vec \iota \in \{-1, 1\}^N,$ and $\vec \lam_n \in (0, \infty)^N$ such that after passing to a subsequence, we have 
\EQ{ \label{eq:decomp-0} 
\lim_{n \to \infty} \Big( \| u(t_n) - \calW(\vec \iota, \vec \lam_n) \|_{\E( r \le A (T_+- t_n)^{\frac{1}{2}})}^2 + \sum_{j =1}^{N-1} \Big( \frac{ \lam_{n, j}}{\lam_{n, j+1}} \Big)^{\frac{D-2}{2}} \Big) = 0
}
for each $A>0$, and $\lam_{n, N} \lesssim (T_+- t_n)^{\frac{1}{2}}$ if $N\geq 1.$ Consider a smooth cut-off function
\begin{align}
\phi &\equiv 1\text{ on } [r_1,r_2],\quad \phi \equiv 0 \text{ on }(0,r_1/2]\cap[2r_2,\infty),
\end{align} 
where we will choose positive parameters $r_1$ and $r_2$ appropriately. Define the localized $\E$-norm on the annulus
\begin{align}
    \Tilde{\Theta}_{[r_1,r_2]}(t) = \int_0^\infty  \Tilde{\bfe}(u(t,r)) \phi(r)^2r^{D-1} \dr,\quad \Tilde{\Theta}^*_{[r_1,r_2]} = \int_0^\infty \Tilde{\bfe}(u^*(r)) \phi(r)^2 r^{D-1} \dr.
\end{align}
Using~\eqref{eqn:modified energy ineq II}, $|\partial_r \phi|\lesssim \frac{1}{r_1}$, and \eqref{eqn:radial-sobolev} for each $0<s<\tau< T_+$ we have
\EQ{ \label{eq:Theta-ineq} 
&| \Tilde{\Theta}_{[r_1,r_2]}(\tau) - \Tilde{\Theta}_{[r_1,r_2]}(s) | \\
&\lesssim  \int_{s}^{\tau} \|\partial_t u\|^2_{L^2}\mathrm{d}t+ \left(\int_{s}^{\tau} \int_0^{\infty}|u|^{2p} \phi^2\right)^{\frac{1}{2}}  \left(\int_{s}^{\tau} \int_0^{\infty} (\partial_t u)^2 \phi^2\right)^{\frac{1}{2}}  \\
&\quad + \left(\int_{s}^{\tau} \int_0^{\infty} (\partial_t u)^2 \phi^2 (\partial_r \phi)^2\right)^{\frac{1}{2}}   \left(\int_{s}^{\tau} \int_0^{\infty} (\partial_r u)^2 \right)^{\frac{1}{2}} + \left(\int_{s}^{\tau} \int_0^{\infty} \frac{|u|^2}{r^4} \phi^2\right)^{\frac{1}{2}}  \left(\int_{s}^{\tau} \int_0^{\infty}(\partial_t u)^2 \phi^2\right)^{\frac{1}{2}}\\
&\lesssim  \int_s^{T_+} \| \p_t u(t) \|_{L^2}^2 \, \ud t  + \left(\int_{s}^{\tau} \int_{r_1}^{r_2} \frac{1}{r^3}\dr \mathrm{d}t\right)^{\frac{1}{2}} \left(\int_{s}^{\tau} \|\partial_t u\|^2_{L^2} \mathrm{d}t\right)^{\frac{1}{2}} \\
&\quad + \frac{(\tau-s)^{\frac{1}{2}}}{r_1} \left(\int_{s}^{\tau} \|\partial_t u\|^2_{L^2} \mathrm{d}t\right)^{\frac{1}{2}} +\left(\int_{s}^{\tau} \int_{r_1}^{r_2} \frac{1}{r^{3}} \dr\ud t\right)^{\frac{1}{2}}  \left(\int_{s}^{\tau} \|\partial_t u\|^2_{L^2}\mathrm{d}t\right)^{\frac{1}{2}}\\
&\lesssim  \int_s^{T_+} \| \p_t u(t) \|_{L^2}^2 \, \ud t  + \frac{(T_+-s)^{\frac{1}{2}}}{r_1} \left(\int_{s}^{T_+} \|\partial_t u\|^2_{L^2} \mathrm{d}t\right)^{\frac{1}{2}}.
} 
Letting $s\to T_+$ we see that $\lim_{s\to T_+} \Tilde{\Theta}_{[r_1,r_2]}(s)$ exists. We first prove
\begin{align}\label{eqn:fixed-annulus-estimate}
\lim_{t\to T_+}\Tilde{E}(u(t);\alp(T_+-t)^{\frac{1}{2}},r_0) = \Tilde{E}(u^*;0,r_0)
\end{align}
for any $r_0 \in (0, \infty]$ and $\alp>0$. Let $r_2=r_0$, then 
\begin{align}\label{eqn:theta-diff}
\Tilde{\Theta}_{[r_1,r_0]}(\tau) - \Tilde{\Theta}^*_{[r_1,r_0]} = \int_{r_1/2}^{2r_0}  \left(\Tilde{\mathbf{e}}(u(\ta)) - \Tilde{\mathbf{e}}(u^*)\right)\phi(r)^2 r^{D-1}\ud r.
\end{align}
As $\tau\to T_+$, the expression on the RHS tends to zero by \eqref{eqn:smooth-conv-out-origin}. Thus choosing $r_1 = \alp(T_+-s)^{\frac{1}{2}}$ and $r_2=r_0$  in \eqref{eq:Theta-ineq} and using \eqref{eqn:theta-diff} we get
\begin{align}
    \left|\Tilde{\Theta}^*_{[\alp(T_+-s)^{\frac{1}{2}},r_0]}-\Tilde{\Theta}_{[\alp(T_+-s)^{\frac{1}{2}},r_0]}(s)\right| \lesssim \int_s^{T_+} \| \p_t u(t) \|_{L^2}^2 \, \ud t  + \frac{1}{\alp} \left(\int_{s}^{T_+} \|\partial_t u\|^2_{L^2} \mathrm{d}t\right)^{\frac{1}{2}},
\end{align}
which implies
\begin{align}\label{eqn:body-map-limit-r0}
   \lim_{s\to T_+} \Tilde{\Theta}_{[\alp(T_+-s)^{\frac{1}{2}},r_0]}(s) =  \Tilde{\Theta}^*_{[0,r_0]}.
\end{align}
By the same argument, if we take $r_1=\alp(T_+-s)^{\frac{1}{2}}/2$ and $r_2=\alp(T_+-s)^{\frac{1}{2}}$ we see that
\begin{align}
   \lim_{s\to T_+} \Tilde{\Theta}_{[\alp(T_+-s)^{\frac{1}{2}}/2,\alp(T_+-s)^{\frac{1}{2}}]}(s) = 0,
\end{align}
which implies 
\EQ{\label{eqn:ann}
\lim_{s\to T_+} \tilde{E}(u(s);\alp(T_+-s)^{\frac{1}{2}}/2,\alp(T_+-s)^{\frac{1}{2}})=0.
}
Since
\EQ{
&|\Tilde{E}(u(s); \alp(T_+-s)^{\frac{1}{2}},r_0) - \Tilde{E}(u^*; 0,r_0)| \\
&\lesssim |\Tilde{\Theta}_{[\alp (T_+-s)^{\frac{1}{2}},r_0]}(s) - \Tilde{\Theta}^*_{[\alp (T_+-s)^{\frac{1}{2}},r_0]}| + \Tilde{E}(u(s); \alp(T_+-s)^{\frac{1}{2}}/2,\alp(T_+-s)^{\frac{1}{2}}) \\
&\quad + \Tilde{E}(u^*; 0,\alp(T_+-s)^{\frac{1}{2}}) + |\int_{r_0}^{2r_0}  (\Tilde{\bfe}(u(s))-\Tilde{\bfe}(u^*))\phi^2r^{D-1}\mathrm{d}r|
}
and all four terms of the RHS of the above display go to zero as $s\to T_+$ by \eqref{eqn:body-map-limit-r0}, \eqref{eqn:ann}, $u^*\in \calE$, and \eqref{eqn:smooth-conv-out-origin} respectively, we get \eqref{eqn:fixed-annulus-estimate}.
\newline 
Proof of \eqref{eq:radiation}: It suffices to show
\begin{align}\label{eqn:tilde-e-ext}
   \lim_{\ta\to T_+} \Tilde{E}( u(\tau) - u^*;& \al (T_+- \tau)^{\frac{1}{2}}, \infty)=0.
\end{align}
for any $\alp>0$. Let $\veps >0$. Then note the following estimate,
\EQ{
\Tilde{E}( u(\tau) - u^*;& \al (T_+- \tau)^{\frac{1}{2}}, \infty) \le \Tilde{E}( u(\tau) - u^*; \al (T_+- \tau)^{\frac{1}{2}}, r_0) + \Tilde{E}( u(\tau) - u^*; r_0, \infty)\\
&\le 2\Tilde{E}( u(\tau); \al (T_+- \tau)^{\frac{1}{2}}, r_0) + 2 \Tilde{E}(u^*; \al (T_+- \tau)^{\frac{1}{2}}, r_0) + \Tilde{E}( u(\tau) - u^*; r_0, \infty).
}
We can first choose $r_0>0$ small enough such that for $\ta>0$ sufficiently close to $T_+$ we have
\begin{align}
    \Tilde{E}(u^*;\alp(T_+-\tau)^{\frac{1}{2}},r_0) \leq \Tilde{E}(u^*;0,r_0) \leq \veps.
\end{align}
Next using \eqref{eqn:smooth-conv-out-origin} we see that for $\tau$ sufficiently close to $T_+$
\begin{align}
\Tilde{E}(u(\tau) - u^*; r_0, \infty) \le \veps.
\end{align}
Finally from \eqref{eqn:fixed-annulus-estimate} we have that
\EQ{
 \Tilde{E}(u(\tau);\alp(T_+-\tau)^{\frac{1}{2}},r_0) \le 2\veps.
}
Thus, combining all the above estimates for $\tau$ sufficiently close to $T_+$, we have
\begin{align}
   \Tilde{E}( u(\tau) - u^*;& \al (T_+- \tau)^{\frac{1}{2}}, \infty) \le 4\veps
\end{align}
which establishes \eqref{eqn:tilde-e-ext} and thus yields \eqref{eq:radiation}. 
\newline 
Proof of \eqref{eq:en-ext-bu}: To show that the nonlinear energy vanishes on the region $(\alp(T_+-t)^{\frac{1}{2}},\infty)$ as $t\to T_+$, first recall that
\begin{align}
    E(u(t);\alp(T_+-t)^{\frac{1}{2}},\infty) =\frac{1}{2} \int_{\alp(T_+-t)^{\frac{1}{2}}}^\infty |\partial_r u(t)|^2 r^{D-1}\ud r - \frac{1}{2^*} \int_{\alp(T_+-t)^{\frac{1}{2}}}^{\infty} |u(t)|^{2^*}r^{D-1}\ud r.
\end{align}
The first term in the RHS of the above equality tends to zero due to \eqref{eqn:tilde-e-ext}. For the second term, using \eqref{eqn:radial-sobolev} we get
\begin{align}
  0\leq  \int_{\alp(T_+-t)^{\frac{1}{2}}}^{\infty} |u(t)|^{2^*} r^{D-1}\dr &= \int_{\alp(T_+-t)^{\frac{1}{2}}}^{\infty} r^2|u(t)|^{2^*-2} \frac{|u(t)|^2}{r^2} r^{D-1}\dr \\
  &\leq \|u(t)\|_{\cE(\alp(T_+-t)^{\frac{1}{2}})}^{2^*}\to 0,
\end{align}
as $t\to T_+$ due to \eqref{eqn:tilde-e-ext}. This proves \eqref{eq:en-ext-bu}. 
\newline
{Proof of \eqref{eq:en-annulus}}: Let $0< \al< A < \infty.$ We first show that 
\begin{align}\label{eqn:modified-energy-annulus-estimate}
\lim_{s \to T_+} \Tilde{E}(u(s); \al (T_+ -s)^{\frac{1}{2}}, A(T_+ -s)^{\frac{1}{2}}) = 0.
\end{align}
To see this, we just make use of \eqref{eq:en-ext-bu}. Since 
\begin{align}
  0 &\leq \Tilde{E}(u(s); \al (T_+ -s)^{\frac{1}{2}}, A(T_+ -s)^{\frac{1}{2}}) = \Tilde{E}(u(s)-u^*+u^*; \al (T_+ -s)^{\frac{1}{2}}, A(T_+ -s)^{\frac{1}{2}}) \\
  &\leq 2\Tilde{E}(u(s)-u^*; \al (T_+ -s)^{\frac{1}{2}}, A(T_+ -s)^{\frac{1}{2}})+ 2\Tilde{E}(u^*; \al (T_+ -s)^{\frac{1}{2}}, A(T_+ -s)^{\frac{1}{2}})\\
  &\leq 2\Tilde{E}(u(s)-u^*; \al (T_+ -s)^{\frac{1}{2}}, \infty)+ 2\Tilde{E}(u^*; \al (T_+ -s)^{\frac{1}{2}}, A(T_+ -s)^{\frac{1}{2}})
\end{align}
which tends to zero as $s\to T_+.$ Thus
\begin{align}\label{eqn:tilde-e-annulus}
\lim_{s \to T_+} \Tilde{E}(u(s); \al (T_+ -s)^{\frac{1}{2}}, A(T_+ -s)^{\frac{1}{2}}) = 0.
\end{align}
If we define the energy density \eqref{eqn:energy density} with just the gradient term, i.e.,
\EQ{
\Bar{E}= \int_0^\infty (\partial_r u)^2 r^{D-1}\dr
}
then since $\bar{E}(v)\leq \Tilde{E}(v)$ for any $v\in \mathcal{E}$ we get
\begin{align}
\lim_{s \to T_+} \bar{E}(u(s); \al (T_+ -s)^{\frac{1}{2}}, A(T_+ -s)^{\frac{1}{2}}) = 0.
\end{align}
Consequently, if we consider the nonlinear energy, then
\begin{align}
    E(u(t);\alp(T_+-t)^{\frac{1}{2}},A(T_+-t)^{\frac{1}{2}}) =\frac{1}{2} \bar{E}(u(t);\alp(T_+-t)^{\frac{1}{2}},A(T_+-t)^{\frac{1}{2}}) - \frac{1}{2^*} \int_{\alp(T_+-t)^{\frac{1}{2}}}^{A(T_+-t)^{\frac{1}{2}}} |u(t)|^{2^*}.
\end{align}
The first term in the RHS of the above equality tends to zero, so we only need to show that the second term also tends to zero. For this, we observe that by using a cut-off function and then the Sobolev inequality, we get
\begin{align}
  0\leq  \int_{\alp(T_+-t)^{\frac{1}{2}}}^{A(T_+-t)^{\frac{1}{2}}} |u|^{2^*} r^{D-1}\dr &\lesssim  \Tilde{E}\big(u(t);\alp (T_+-t)^{\frac{1}{2}}/2,2A(T_+-t)^{\frac{1}{2}}\big)^{2^*/2}\to 0
\end{align}
as $t\to T_+$, and therefore we get
\begin{align}
\lim_{t\to T_+} E(u(t); \al (T_+ -t)^{\frac{1}{2}}, A(T_+ -t)^{\frac{1}{2}}) = 0.
\end{align}
{Proof of \eqref{eq:energy-limit} and \eqref{eq:N-bubbles-bu}}. For any $\al>0$, using \eqref{eqn:finite-num-bubble} and \eqref{eq:radiation} we get
\begin{align}\label{eqn:e-d}
\lim_{t\to T_+} {E}(u(t)) &= L +{E}(u^*) \\
&= \lim_{t\to T_+} {E}(u(t);0,\alp(T_+-t)^{\frac{1}{2}}) + \lim_{t\to T_+} {E}(u(t);\alp(T_+-t)^{\frac{1}{2}},\infty)\\
&= \lim_{t\to T_+} {E}(u(t);0,\alp(T_+-t)^{\frac{1}{2}}) + \lim_{t\to T_+} {E}(u(t)-u^*+u^*;\alp(T_+-t)^{\frac{1}{2}},\infty)\\
&= \lim_{t\to T_+} {E}(u(t);0,\alp(T_+-t)^{\frac{1}{2}}) + \lim_{t\to T_+} {E}(u(t)-u^*;\alp(T_+-t)^{\frac{1}{2}},\infty) \\
&\quad + \lim_{t\to T_+} {E}(u^*;\alp(T_+-t)^{\frac{1}{2}},\infty)+\mathcal{O}\left(\lim_{t\to T_+} \| u(t)-u^*\|_{\E(\alp(T_+-t)^{\frac{1}{2}})}\right)\\
&= \lim_{t\to T_+} {E}(u(t);0,\alp(T_+-t)^{\frac{1}{2}})+ {E}(u^*).
\end{align}
Thus for any $\alp>0$ we have
\EQ{\label{eqn:L-on-parabolic-ball}
\lim_{t\to T_+} {E}(u(t);0,\alp(T_+-t)^{\frac{1}{2}}) = L.
}
We now aim to show that $L=NE(W)$ and that $N\geq 1.$ If $N=0$, then from the decomposition \eqref{eq:decomp-0} we deduce that $\|u(t_n)\|_{\calE(r\leq \alp(T_+-t_n)^{\frac{1}{2}})}\to 0$ as $n\to \infty$ which along with \eqref{eq:en-ext-bu}, implies that $0 \not \in \calS$ and therefore by $\veps$-regularity $u(t)$ is in $L^\infty$ for $r\in [0,\infty)$ up until $t=T_+$ contradicting the definition of the maximal blow up time (cf. Section 2.6 in \cite{collotdescription}). Thus $N\geq 1.$ To see that $L=NE(W)$ we observe that along the sequence $\{t_n\}$ in \eqref{eq:decomp-0} we have the following energy identity:
\begin{equation}
    \lim_{t_n\to T_+} E(u(t_n);0,\alp(T_+-t_n)^{\frac{1}{2}}) = N E(W)
\end{equation}
since $\lam_{n,N}\lesssim(T_+-t_n)^{\frac{1}{2}}$ if $N\geq 1.$ Therefore comparing this with \eqref{eqn:L-on-parabolic-ball} we deduce that $L=NE(W).$
\end{proof} 

\section{Sequential bubbling for global solutions}
The goal of this section is to establish sequential soliton resolution when $T_+=\infty.$
\begin{prop}[Sequential bubbling for global-in-time solutions]  \label{prop:seq-global}  
Let $u_0 \in \E$ and let $u(t)$ denote the solution to~\eqref{eqn:NLH} with initial data $u_0$ and bounded $\calE$-norm. Suppose that $T_+(u_0) = \infty$.  Then there exist $T_0\in (0,\infty)$, a unique integer $N \ge 0$, a sequence of times $t_n \to \infty$, signs $\vec\iota \in \{-1, 1\}^N$, a sequence of scales $\vec \lam_n \in (0, \infty)^N$, and a sequence of $g_n$ defined by 
\EQ{
u(t_n) = \sum_{j =1}^N \iota_j W_{\lam_{n,j}}  + g_n
}
with the following properties:
\begin{itemize} 
\item[(i)] the unique integer $N \ge 0 $ satisfies,  
\EQ{ \label{eq:energy-limit-global} 
\lim_{t \to \infty} E( u(t)) =  NE(W); 
}
\item[(ii)] for every $\alpha>0$, 
\EQ{ \label{eq:ext-energy-global} 
\lim_{t \to \infty} E( u(t); \al \sqrt{t}, \infty) = 0, 
} 
and there exist $T_0>0$ and a function $ \rho: [T_0, \infty) \to (0, \infty)$ such that 
\EQ{ \label{eq:ext-E-global} 
\lim_{t \to \infty} \Big( \frac{\rho(t)}{ \sqrt{t}} + \| u(t)\|_{\E(\rho(t))} \Big) = 0; 
}
\item[(iii)] the scales $\vec \lam_n$ and the sequence $g_n$ satisfy, 
\EQ{ \label{eq:global-seq} 
\lim_{ n \to \infty} \Big( \| g_n \|_{\E}^2 + \sum_{j =1}^N \Big( \frac{\lam_{n, j}}{\lam_{n, j+1}} \Big)^{\frac{D-2}{2}}  \Big)^{\frac{1}{2}}  = 0 
}
where here we adopt the convention that $\lam_{n, N+1} :=  t_n^{\frac{1}{2}}$. 
\end{itemize} 
\end{prop}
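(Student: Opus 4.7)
The plan is to mirror the proof of Proposition~\ref{prop:seq-ftbu}, replacing $T_+ - t$ by $t$ and the body map $u^*$ by $0$; heuristically, a globally bounded finite-energy solution cannot sustain a nontrivial profile beyond the self-similar scale $\sqrt{t}$, so no body map appears. First, by the global energy identity~\eqref{eq:energy-identity} applied on $[0,\infty)$, $\int_0^\infty \|\p_t u(t)\|_{L^2}^2 \ud t < \infty$, and a dyadic pigeonhole on $[2^k, 2^{k+1}]$ extracts $t_n \to \infty$ with $\sqrt{t_n}\,\|\calT(u(t_n))\|_{L^2} \to 0$. Applying Lemma~\ref{lem:compactness} with $\rho_n = \sqrt{t_n}$ and passing to a subsequence produces the integer $N \geq 0$, signs $\vec\iota \in \{-1,1\}^N$, scales $\vec\lam_n$ with $\lam_{n,N} \leq C_0 \sqrt{t_n}$ and consecutive ratios tending to zero, and $R_n \to \infty$ such that $\|u(t_n) - \calW(\vec\iota, \vec\lam_n)\|_{\E(r \leq R_n \sqrt{t_n})} \to 0$.

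The heart of the argument is~(ii). Using Lemma~\ref{lem: localized energy density} with the cutoff $\phi$ of the proof of Proposition~\ref{prop:seq-ftbu} (equal to $1$ on $[r_1, r_2]$ and supported in $[r_1/2, 2r_2]$), the same computation yields, for $s < \tau$,
\EQ{
|\ti\Theta_{[r_1,r_2]}(\tau) - \ti\Theta_{[r_1,r_2]}(s)| \lesssim \int_s^\infty \|\p_t u(t)\|_{L^2}^2 \ud t + \frac{(\tau - s)^{1/2}}{r_1}\Big(\int_s^\infty \|\p_t u(t)\|_{L^2}^2 \ud t\Big)^{1/2}.
}
Specializing $r_1 = \alpha\sqrt{s}$ and constraining $\tau - s \lesssim s$, the right-hand side is $o(1)$ as $s \to \infty$. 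Taking $s = t_n$, the quantity $\ti\Theta_{[\alpha\sqrt{t_n}, r_2]}(t_n)$ is controlled by the bubble-tree $\E$-mass on $[\alpha\sqrt{t_n}, R_n\sqrt{t_n}]$ plus the sequential bubble error; letting $\alpha \to 0$ and $r_2 \to \infty$ after $n \to \infty$, and using Lemma~\ref{lem:un-seq} with $\iota_n = 0$ to propagate sequential smallness to full time neighborhoods of $t_n$, one deduces $\ti E(u(t); \alpha\sqrt{t}, \infty) \to 0$ for every $\alpha > 0$. Lemma~\ref{lem:localized-coercivity} transfers this to the nonlinear energy, yielding~(ii); the scale function $\rho(t) = o(\sqrt{t})$ with $\|u(t)\|_{\E(\rho(t))}\to 0$ is then produced by a standard diagonal extraction.

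Given~(ii), the refinement $\lam_{n,N}/\sqrt{t_n} \to 0$ required for~(iii) follows by contradiction: if $\lam_{n,N} \geq c\sqrt{t_n}$ along a subsequence, then $\iota_N W_{\lam_{n,N}}$ would carry uniformly positive $\E$-mass on $[\alpha\sqrt{t_n},\infty)$ for some small $\alpha$, incompatible with the bubble decomposition and~(ii). Statement~(i) then follows from the multi-bubble energy expansion (Lemma~\ref{lem:M-bub-energy}) applied to the decomposition, which together with~(ii) gives $E(u(t_n)) \to NE(W)$; the monotonicity of $t \mapsto E(u(t))$ promotes the sequential limit to the continuous one. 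The main obstacle is precisely Step~(ii): the bubble tree $\calW(\vec\iota,\vec\lam_n)$ may carry order-one $\E$-mass on the annulus $[\alpha\sqrt{t_n}, R_n\sqrt{t_n}]$ because only $\lam_{n,N} \leq C_0\sqrt{t_n}$ is available, and closing the estimate demands that the parameters $\alpha, r_2, n$ be chosen in the right order while absorbing this deficit into the tension tail---the role played in the finite-time setting by the body map convergence of Lemma~\ref{lem:body-map}.
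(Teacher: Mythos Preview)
Your proposal has a genuine gap in step~(ii), which you yourself flag as ``the main obstacle'' but do not resolve. The circularity is real: you want to control $\ti E(u(t);\alpha\sqrt{t},\infty)$ by first controlling it at the sequential times $t_n$ via the bubble decomposition, but the bubble tree $\calW(\vec\iota,\vec\lam_n)$ may carry order-one $\E$-mass on $[\alpha\sqrt{t_n},\infty)$ precisely because you only know $\lam_{n,N}\le C_0\sqrt{t_n}$, not $\lam_{n,N}\ll\sqrt{t_n}$. Your suggested fix---letting $\alpha\to 0$ and $r_2\to\infty$ after $n\to\infty$---does not prove the claim for a \emph{fixed} $\alpha>0$, and invoking Lemma~\ref{lem:un-seq} with $\iota_n=0$ only propagates smallness for times $t\lesssim r_1^2$, which is far too short to cover all large times.

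The paper's argument sidesteps this circularity entirely by \emph{not} using the bubble decomposition to prove~(ii). The key observation you are missing is elementary: fix any time $T_0$; then $u(T_0)\in\E$ is a single fixed finite-energy function, so $\|u(T_0)\|_{\E(r\ge R)}\to 0$ as $R\to\infty$. Since the exterior scale is $\alpha\sqrt{T}\to\infty$, one can choose $T_1\ge T_0$ so that $\|u(T_0)\|_{\E(\alpha\sqrt{T}/4,\infty)}\le\varepsilon$ for all $T\ge T_1$. Now apply the localized energy inequality~\eqref{eqn:modified energy ineq II} with the time-independent cutoff $\phi_T(r)=1-\chi(4r/\alpha\sqrt{T})$ on the interval $[T_0,T]$: the error terms are controlled by $\big(\int_{T_0}^\infty\|\partial_t u\|_{L^2}^2\big)^{1/2}$, which is small by choice of $T_0$, and the factor $(T-T_0)/T\le 1$ is harmless. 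This gives $\ti E(u(T);\alpha\sqrt{T},\infty)\le\varepsilon$ directly, with no appeal to bubbles. Only \emph{after}~(ii) is established does the paper return to the decomposition to deduce $\lam_{n,N}/\sqrt{t_n}\to 0$, exactly as in your step~(iii). Your steps~(i) and~(iii) are fine once~(ii) is in hand.
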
 
\begin{proof} 
Let $u(t) \in \E$ be a solution of \eqref{eqn:NLH} defined globally in time. By~\eqref{eq:tension-L2} we can find a sequence $t_n \to \infty$ so that, $t_n^{\frac{1}{2}} \| \calT( u(t_n)) \|_{L^2}  \to 0 \mas n \to \infty. $ We can now apply Lemma~\ref{lem:loc-seq} which yields $N \ge 0$, $\vec \iota \in \{-1, 1\}^N, \vec \lam_n \in (0, \infty)^N$ such that after passing to a subsequence, we have 
 \begin{align}\label{eq:decomp-global-0} 
\lim_{n \to \infty} \Big( \| u(t_n) - \calW(\vec \iota, \vec \lam_n) \|_{\E( r \le A  \sqrt{t_n})}^2 + \sum_{j =1}^{N-1} \Big( \frac{ \lam_{n, j}}{\lam_{n, j+1}} \Big)^{\frac{D-2}{2}} \Big) = 0     
 \end{align}
for each $A>0$, and moreover that $\lam_{n, N} \lesssim  t_n^{\frac{1}{2}}$. Fix $\alpha>0$ and let $\varepsilon >0$. Then by~\eqref{eq:tension-L2} and $\|u_0\|_{\E}< \infty$  we can find $T_0 = T_0(\varepsilon) >0$ such that, 
\EQ{ \label{eq:T_0-choice} 
\Big(\int_{T_0}^\infty \int_0^\infty (\p_t u (t, r))^2 \, r^{D-1} \, \ud r \, \ud t\Big)^{\frac{1}{2}}  \le   \varepsilon.
}
Next, choose $T_1 \ge T_0$ so that 
\EQ{ \label{eq:T_1-choice} 
\|u(T_0)\|_{\E(\al \sqrt{T}/4, \infty)} \le \varepsilon
}
for all $T\geq T_1.$ Fixing any such $T$, we set 
\begin{align*}
 \phi(t, r) = \phi_T(r)  = 1 - \chi( 4r/ \al \sqrt{T}) \mfor t \in [T_0, T]. 
\end{align*}
Since $\frac{\ud }{\ud t} \phi(t, r) = 0$ for $t \in [T_0, T]$ it follows from~\eqref{eqn:modified energy ineq II} that
\begin{align}\label{eqn:local-monotonicity}
 &\int_0^{\infty} \Tilde{\mathbf{e}}(u(T)) \phi^2 - \int_0^{\infty} \Tilde{\mathbf{e}}(u(T_0))\phi^2  \\
&\le -2 \int_{T_0}^{T}\int_0^{\infty} (\partial_t u)^2  \phi^2 r^{D-1}\dr  \mathrm{d}t  + 4 \left(\int_{T_0}^{T}\int_0^{\infty} (\partial_t u)^2 \phi^2 (\partial_r \phi)^2 r^{D-1}\dr \mathrm{d}t\right)^{1/2}   \left(\int_{T_0}^{T}\int_0^{\infty} (\partial_r u)^2 r^{D-1}\dr \mathrm{d}t \right)^{1/2} \\
&\quad +2 \left(\int_{T_0}^{T}\int_0^{\infty} |u|^{2p} \phi^2 r^{D-1}\dr \mathrm{d}t\right)^{1/2}  \left(\int_{T_0}^{T}\int_0^{\infty} (\partial_t u)^2 \phi^2 r^{D-1} \dr \mathrm{d}t\right)^{1/2}  \\
&\quad +2 \left(\int_{T_0}^{T} \int_0^{\infty} \frac{|u|^2}{r^4} \phi^2 r^{D-1}\dr \mathrm{d}t\right)^{1/2}  \left(\int_{T_0}^{T} \int_0^{\infty}(\partial_t u)^2 \phi^2 r^{D-1}\dr \mathrm{d}t\right)^{1/2}\\
   &\lesssim  \left(\frac{T-T_0}{ {T}}\right)^{1/2}  \left(\int_{T_0}^{T}\int_0^{\infty} (\partial_t u)^2 r^{D-1}\ud r \ud t \right)^{1/2}+  \left(\int_{T_0}^{T} \int_{\frac{\alp \sqrt{T}}{4}}^{\infty} \frac{1}{r^{3}}\dr \mathrm{d}t \right)^{1/2}  \left(\int_{T_0}^{T} \int_{0}^\infty (\partial_t u)^2 \right)^{1/2}  \\
&\lesssim \left(\int_{T_0}^{\infty} \int_{0}^\infty (\partial_t u)^2 \right)^{1/2},
\end{align}
where we used $|\partial_r \phi|^2\lesssim T^{-1}$ and the radial Sobolev embedding \eqref{eqn:radial-sobolev} to control the integral involving $|u|^{2p}$. The constant in the above inequality depends on $\sup_{t\in [0,\infty)}\|u(t)\|_{\E}$ and $\alp$. So in particular we can make this small by choosing $T_0$ large enough such that $\Tilde{C}  \left(\int_{T_0}^{\infty} \int_{0}^\infty (\partial_t u)^2 \right)^{1/2}  \leq \frac{\varepsilon}{2}.$ Using the above together with~\eqref{eq:T_0-choice} and~\eqref{eq:T_1-choice}, we find that the Dirichlet energy can be made arbitrarily small
\EQ{\label{eq:energy-bound}
\Tilde{E}( u(T); \al \sqrt{T}, \infty) \le \varepsilon 
}
for all $T \ge T_1$ and $t\in [T_0,T]$. Thus by the localized coercivity lemma \ref{lem:localized-coercivity} we see that that ${E}( u(T); \al \sqrt{T}, \infty)\geq 0$ and moreover since
\begin{align}
    E( u(T); \al \sqrt{T}, \infty) \lesssim \Tilde{E}( u(T); \al \sqrt{T}, \infty) \le \varepsilon
\end{align}
we get ~\eqref{eq:ext-energy-global}. Since we proved ~\eqref{eq:ext-energy-global} for any $\alp>0$ there exists a curve $\rho$ such that \eqref{eq:ext-E-global} holds. Returning to the sequential decomposition we see from~\eqref{eq:decomp-global-0}, the fact that $\lam_{n, N} \lesssim t_n^{\frac{1}{2}},$ and from~\eqref{eq:ext-energy-global} that we must have 
\EQ{
\lim_{n \to \infty} \frac{\lam_{n, N}}{t_n^{\frac{1}{2}}}  = 0. 
}
Then,~\eqref{eq:global-seq} follows from the above,~\eqref{eq:ext-E-global} and~\eqref{eq:decomp-global-0}. Moreover, we see that $\lim_{n \to \infty} E( u(t_n))  =  N E(W)$ and the continuous limit~\eqref{eq:energy-limit-global} then follows from the fact that $E( u(t))$ is non-increasing. 
\end{proof}

\section{Decomposition of the solution and collision intervals}\label{sec:decomposition} 
For the remainder of the paper, we fix a solution $u(t)$ of \eqref{eqn:NLH}, with bounded energy (in $\calE$-norm) defined on the time interval $I_*=[0, T_+)$
where $T_+< \infty$ or $T_+=\infty$. By Propositions \ref{prop:seq-ftbu} and \ref{prop:seq-global} there exist a unique integer $N \ge0$, a unique function $u^*\in \calE$ and a sequence of times $t_n \to T_+$ so that $u(t_n)-u^*$ approaches an $N$-bubble as $n \to \infty$. When $T_+=\infty$, we simply set $u^*=0$ and when $T_+<\infty$ then $N\geq 1$. We define the localized distance to an $N$-bubble. 
\begin{defn}[Proximity to a multi-bubble]
\label{def:proximity}
For all $t \in I_*$, $\rho \in (0, \infty)$, and $K \in \{0, 1, \ldots, N\}$, we define
the \emph{localized multi-bubble proximity function} as
\begin{equation}
\bfd_K(t; \rho) := \inf_{\vec \iota, \vec\lam}\bigg( \|  u(t)  - u^*- \mathcal{W}( \vec\iota, \vec\lambda) \|_{\cE(\rho)}^2 + \sum_{j=K}^{N}\Big(\frac{ \lam_{j}}{\lam_{j+1}}\Big)^{\frac{D-2}{2}} \bigg)^{\frac{1}{2}},
\end{equation}
where $\vec\iota := (\iota_{K+1}, \ldots, \iota_N) \in \{-1, 1\}^{N-K}$, $\vec\lambda := (\lambda_{K+1}, \ldots, \lambda_N) \in (0, \infty)^{N-K}$, $\lambda_K := \rho$ and $\lambda_{N+1} := \sqrt{T_+-t}$ when $T_+<\infty$ and $\lam_{N+1}:=\sqrt{t}$ when $T_+=\infty.$ The \emph{multi-bubble proximity function} is defined by $\bfd(t) := \bfd_0(t; 0)$.
\end{defn}
\begin{rem} 
Observe that $\bfd_K(t; \rho)$ being small implies that $ u(t)-u^*$ is close to $N-K$ bubbles in the exterior region  $r \in (\rho, \infty)$. 
\end{rem} 
From Definition ~\ref{def:proximity} and Propositions ~\ref{prop:seq-ftbu} and ~\ref{prop:seq-global} we deduce that there exists a monotone sequence $t_n \to T_+$ such that
\begin{equation}
\label{eq:dtn-conv}
\lim_{n \to \infty} \bfd(t_n) = 0.
\end{equation}
Furthermore, proving Theorem~\ref{thm:main} would follow from showing that
\begin{equation}
\label{eq:dt-conv}
\lim_{t \to T_+} \bfd(t) = 0.
\end{equation}
Note that it suffices to prove Theorem~\ref{thm:main} in the case when $N\geq 1$. Indeed when $N=0$, $T_+=\infty$ by Proposition \ref{prop:seq-ftbu} and $\lim_{n\to \infty} \|u(t_n)\|_{\calE} = 0$ by Proposition \ref{prop:seq-global}, so the small data Cauchy theory from Lemma \ref{lem:lwp} implies $\lim_{t\to \infty}\|u(t)\|_{\calE}= 0$. Furthermore, as a direct consequence of \eqref{eq:radiation} and \eqref{eq:ext-E-global}, defining $\rho_N(t)=\rho(t)$ gives us the following lemma.
\begin{lem}
\label{lem:conv-rhoN}
There exist $T_0>0$ and a function $\rho_N: [T_0, T_+) \to (0, \infty)$ such that
\begin{equation}
\label{eq:conv-rhoN}
\lim_{t\to T_+}\bfd_N(t; \rho_N(t)) = 0.
\end{equation}
\end{lem}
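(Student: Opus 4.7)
The claim is essentially an immediate corollary of the radiation estimates already obtained in the sequential bubbling propositions, so the plan is simply to unpack the definition of $\bfd_N$ and invoke the relevant radiation curve.

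First I would observe that the definition of $\bfd_K(t;\rho)$ collapses considerably when $K = N$: the tuples $\vec \iota$ and $\vec \lambda$ range over the empty index set $\{N+1, \ldots, N\}$, so $\mathcal{W}(\vec\iota, \vec\lambda) \equiv 0$, there is no infimum to take, and the remaining sum consists of the single term $j = N$. Thus
\begin{equation*}
\bfd_N(t;\rho)^2 = \|u(t) - u^*\|_{\cE(\rho,\infty)}^2 + \Big(\frac{\rho}{\lambda_{N+1}}\Big)^{\frac{D-2}{2}},
\end{equation*}
where $\lambda_{N+1}$ is understood as $\sqrt{T_+ - t}$ in the finite-time blow-up case and as $\sqrt{t}$ in the global case (with the convention $u^* \equiv 0$ in the latter).

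The plan is then to split into the two cases and directly define $\rho_N$ as the radiation curve produced previously. In the finite-time blow-up case $T_* = T_+ < \infty$, Proposition \ref{prop:seq-ftbu}(ii) (specifically \eqref{eq:radiation}) furnishes a time $T_0 < T_+$ and a function $\rho: [T_0, T_+) \to (0,\infty)$ with
\begin{equation*}
\lim_{t \to T_+} \Big(\frac{\rho(t)}{\sqrt{T_+ - t}} + \|u(t) - u^*\|_{\cE(\rho(t))}\Big) = 0.
\end{equation*}
Setting $\rho_N := \rho$, the first term in $\bfd_N(t;\rho_N(t))^2$ vanishes by the second part of the above limit, while the ratio $\rho_N(t)/\sqrt{T_+ - t}$ tends to zero by the first part, so the same holds for its $(D-2)/2$-th power. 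In the global case $T_* = \infty$, Proposition \ref{prop:seq-global}(ii) (specifically \eqref{eq:ext-E-global}) plays exactly the same role with $u^* = 0$ and $\sqrt{t}$ in place of $\sqrt{T_+ - t}$, and again taking $\rho_N := \rho$ gives $\bfd_N(t;\rho_N(t)) \to 0$ as $t \to \infty$.

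There is essentially no obstacle here; the only thing to be careful about is bookkeeping the conventions (that $\lambda_{N+1}$ means $\sqrt{t}$ or $\sqrt{T_+ - t}$ according to context and that $u^*$ is taken to be $0$ when $T_* = \infty$, as declared at the start of Section \ref{sec:decomposition}) so that the two cases can be handled in parallel. The lemma is really a translation of the content of \eqref{eq:radiation} and \eqref{eq:ext-E-global} into the language of the proximity function $\bfd_N$.
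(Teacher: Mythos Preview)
Your proposal is correct and matches the paper's approach exactly: the paper states the lemma as a direct consequence of \eqref{eq:radiation} and \eqref{eq:ext-E-global} without further elaboration, and your argument simply spells out that consequence by unpacking the definition of $\bfd_N$ in each case. If anything, you have supplied more detail than the paper does.
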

\subsection{Collision intervals} \label{ssec:collision} To prove Theorem~\ref{thm:main} we analyze collision intervals. We recall their definition from \cite{lawrie-harmonic-map}.
\begin{defn}[Collision interval]
\label{def:collision}
Let $K \in \{0, 1, \ldots, N\}$. A compact time interval $[a, b] \subset I_*$ is a \emph{collision interval}
with parameters $0 < \varepsilon < \eta$ and $N - K$ exterior bubbles if
\begin{itemize}
\item $\bfd(a) \leq \varepsilon$ and $\bfd(b) \ge \eta$,
\item there exists a function $\rho_K: [a, b] \to (0, \infty)$ such that $\bfd_K(t; \rho_K(t)) \leq \varepsilon$
for all $t \in [a, b]$.
\end{itemize}
In this case, we write $[a, b] \in \calC_K(\varepsilon, \eta)$.
\end{defn}
\begin{defn}[Choice of $K$]
\label{def:K-choice}
We define $K$ as the \emph{smallest} nonnegative integer having the following property.
There exist $\eta > 0$, a decreasing sequence $\varepsilon_n \to 0$,
and sequences $a_n,b_n \to T_+$ such that $[a_n, b_n] \in \calC_K(\varepsilon_n, \eta)$ for all $n \in \{1, 2, \ldots\}$.
\end{defn}
Next, we observe that $K\geq 1$ since if \eqref{eq:dt-conv} is false, then at least one bubble must lose its shape.
\begin{lem}[Existence of $K \ge 1$]
\label{lem:K-exist}
If \eqref{eq:dt-conv} is false, then the number $K$ given by Definition \ref{def:K-choice} is well defined and $K \in \{1, \ldots, N\}$.
\end{lem}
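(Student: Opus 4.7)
My plan addresses the two parts of the statement---(a) well-definedness of $K$ (equivalently $K \le N$), and (b) $K \ge 1$---in turn.

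For (a) I would verify that $K = N$ is admissible. The failure of \eqref{eq:dt-conv} yields $\eta > 0$ and a sequence $s_m \to T_*$ with $\bfd(s_m) \ge \eta$; \eqref{eq:dtn-conv} yields $t_n \to T_*$ with $\bfd(t_n) \to 0$; and Lemma~\ref{lem:conv-rhoN} supplies a curve $\rho_N$ with $\bfd_N(t; \rho_N(t)) \to 0$ as $t \to T_*$. Interlacing the two sequences, for each large $n$ I choose $a_n \in \{t_k\}$ and $b_n \in \{s_m\}$ with $a_n < b_n$ both sufficiently close to $T_*$, and set $\eps_n := \max\bigl(\bfd(a_n),\, \sup_{t \in [a_n, b_n]} \bfd_N(t; \rho_N(t))\bigr)$. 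Then $\eps_n \to 0$ and $[a_n, b_n] \in \calC_N(\eps_n, \eta)$, so $K$ is well-defined with $K \le N$.

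For (b) I would argue by contradiction, supposing $K = 0$. Then there exist $\eta > 0$, $\eps_n \to 0$, $[a_n, b_n] \in \calC_0(\eps_n, \eta)$, and scale curves $\rho_0 \colon [a_n, b_n] \to (0, \infty)$. Unpacking $\bfd_0(b_n; \rho_0(b_n)) \le \eps_n$ produces an $N$-bubble $\calW_n = \calW(\vec\iota_n, \vec\lambda_n)$ with $\|u(b_n) - u^* - \calW_n\|_{\cE(\rho_0(b_n), \infty)} \le \eps_n$ and $\rho_0(b_n)/\lambda_{n, 1} \le \eps_n^{4/(D-2)}$. Using the pointwise bound $\|W_\lambda\|_{\cE(0, \rho)}^2 \lesssim (\rho/\lambda)^{D-2}$ valid for $\rho \ll \lambda$, the interior tail satisfies $\|\calW_n\|_{\cE(0, \rho_0(b_n))} = O(\eps_n^2)$, so contradicting $\bfd(b_n) \ge \eta$ reduces to proving $\|u(b_n) - u^*\|_{\cE(0, \rho_0(b_n))} \to 0$.

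To obtain this interior estimate I would propagate smallness from the left endpoint. At $a_n$, $\bfd(a_n) \le \eps_n$ supplies a global $N$-bubble decomposition whose interior tail gives $\|u(a_n) - u^*\|_{\cE(0, r)} \to 0$ for $r$ small relative to the smallest bubble scale at $a_n$; by Lemma~\ref{lem:bub-config} the scales at $a_n$ and $b_n$ are comparable, so I may take such $r$ slightly above $\rho_0(b_n)$. I would then invoke Lemma~\ref{lem:prop-small-E} on an annulus $[r_1, 2\rho_0(b_n)]$ to transport this smallness forward from $a_n$ to $b_n$, and treat the innermost ball $\cE(0, r_1)$ by combining the energy identity \eqref{eq:energy-limit}/\eqref{eq:energy-limit-global} (which forces $E(u(b_n); 0, \rho_0(b_n)) = E(u^*; 0, \rho_0(b_n)) + o(1)$) with Lemma~\ref{lem:compactness} applied at a time $t'_n \in [a_n, b_n]$ where $\|\calT(u(t'_n))\|_{L^2}$ is small, using the total bubble count $N$ to exclude the formation of a hidden bubble at scales below $r_1$.

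The principal obstacle is the absence of a direct interior analog of Lemma~\ref{lem:localized-coercivity}: smallness of the nonlinear energy on $(0, \rho_0(b_n))$ does not by itself force smallness of the Hardy energy there, because bubbles concentrated at scale $\ll \rho_0(b_n)$ can carry fixed nonlinear energy while being invisible to exterior matching. The propagation-plus-compactness substitute above is itself constrained by the time-scale requirement $t \le \delta r_1^2$ in Lemma~\ref{lem:prop-small-E}, so I expect the most delicate step to be reconciling the length $b_n - a_n$ of the collision interval with the admissible interior scale $r_1$; if this reconciliation fails directly, one would subdivide $[a_n, b_n]$ into finitely many sub-steps or refine the curve $\rho_0$ along the interval so that each propagation step is valid.
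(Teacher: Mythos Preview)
Your argument for (a) is essentially the paper's, so that part is fine.

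For (b), however, your approach has a genuine gap, and you have correctly identified it yourself. The time-propagation route via Lemma~\ref{lem:prop-small-E} cannot close: that lemma requires $t \le \delta r_1^2$, but you have no control on $b_n - a_n$ relative to the interior scale $\rho_0(b_n)^2$. (In fact, Lemma~\ref{lem:collision-duration} later shows collision intervals are \emph{long} compared to bubble scales, which makes your proposed subdivision into finitely many steps hopeless.) Your fallback through Lemma~\ref{lem:compactness} at a time of small tension is also problematic, since that auxiliary time need not be $b_n$, and you would again need some propagation to transfer information back.

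The paper avoids all of this by working at a \emph{single} time, with no propagation. The key points you are missing are:
\begin{itemize}
\item Pick $c_n \in [a_n, b_n]$ with $\eta \le \bfd(c_n) \le 2\eta$; the \emph{upper} bound (from continuity of $\bfd$ and $\bfd(a_n) \le \eps_n$) is essential.
\item The energy identity $E(u(c_n)) = NE(W) + E(u^*) + o_n(1)$ combined with the exterior $N$-bubble decomposition at scale $\rho_n := \rho_0(c_n)$ gives $E(u(c_n); 0, \rho_n) = o_n(1)$ directly, hence $E(v_n) = o_n(1)$ for the cut-off $v_n := \chi_{\rho_n/2} u(c_n)$.
\item The ``interior trapping'' you thought was unavailable is in fact Lemma~\ref{lem:basic-trapping} applied to $v_n$: it only needs $\|v_n\|_{\cE}$ to be \emph{small}, and the upper bound $\bfd(c_n) \le 2\eta$ together with Lemma~\ref{lem:bub-config} (matching the global and exterior bubble scales) forces $\|v_n\|_{\cE} \simeq \eta$.
\end{itemize}
This gives $o_n(1) = E(v_n) \gtrsim \|v_n\|_{\cE}^2 \simeq \eta^2$, the desired contradiction. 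No evolution estimate is needed.
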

\begin{proof}
The proof is similar to the proof of Lemma 5.6 in \cite{lawrie-wave}, but we recall it here for the reader's convenience. 

First, we will show that $K$ is well defined with $K\leq N$. It suffices to show that there exist $\eta >0$, a decreasing sequence $\varepsilon_n\to 0$, and sequences $(a_n), (b_n)$ such that $[a_n,b_n]\in \mathcal{C}_N(\varepsilon_n, \eta)$ for all $n\in \mathbb{N}.$ Since \eqref{eq:dt-conv} is false, there exist $\eta > 0$ and a monotone sequence $b_n \to T_+$ such that for all $n$, 
\begin{equation}
\bfd(b_n) \geq \eta.
\end{equation}
Next, we will find two sequences $(\varepsilon_n)$ and $(a_n)$ such that $[a_n, b_n] \in \calC_N(\varepsilon_n, \eta)$. By \eqref{eq:dtn-conv}, there exist $\varepsilon_n \to 0$ and a monotone sequence $a_n \to T_+$ such that $\bfd(a_n) \leq \varepsilon_n$. Taking a further subsequence of $b_n$, we may also assume that $a_n< b_n$. To verify the second point in the Definition \ref{def:collision}, we need to find a curve $\rho_N.$ To this end, we make use of Lemma \ref{lem:conv-rhoN} which yields the existence of a curve $\rho_N$ which we restrict to the time interval $[a_n,b_n].$ Then \eqref{eq:conv-rhoN} yields
\begin{equation}
\lim_{n\to\infty}\sup_{t\in[a_n, b_n]}\bfd_N(t; \rho_N(t)) = 0.
\end{equation}
Thus, up to changing $\varepsilon_n$, we see that all the conditions of Definition~\ref{def:collision} with $K=N$ are satisfied and hence $[a_n,b_n]\in \calC_N(\varepsilon_n,\eta).$

We now show that $K\geq 1.$ Suppose, to the contrary, that $K = 0$. By Definition~\ref{def:K-choice}, there exist $\eta>0$, a sequence $\veps_n\to 0$, and a sequence of intervals $[a_n,b_n]\in \calC_0(\veps_n,\eta)$ (with $a_n,b_n\to T_+)$. Without loss of generality, we may assume $\eta$ is small such that Lemma \ref{lem:bub-config} and Lemma \ref{lem:basic-trapping} are valid. By Definition~\ref{def:collision}, we can choose $c_n\in [a_n,b_n]$ and $\rho_n>0$ such that $\bfd(c_n)=\eta$ and $\bfd_0(c_n;\rho_n)\leq \veps_n.$ Since $\bfd_0\left(c_n; \rho_n\right) \leq \varepsilon_n$, there exist parameters $\rho_n \ll \lambda_{n, 1} \ll \cdots \ll \lambda_{n, N}$, and signs $\vec{\iota}_n$ such that
\EQ{\label{eqn:exterior-decomp}
\|{u}\left(c_n\right)-u^*-{\mathcal { W }}(\vec{\iota}_n, \vec{\lambda}_n)\|_{\mathcal{E}\left(\rho_n, \infty\right)}^2+\sum_{j=0}^N\left(\frac{\lambda_{n, j}}{\lambda_{n, j+1}}\right)^{\frac{D-2}{2}} \leq \varepsilon_n^2,
}
where $\lam_{n,N+1}=\sqrt{T_+-c_n}$ when $T_+<\infty$ and $\lam_{n,N+1}=\sqrt{c_n}$ when $T_+=\infty.$ The above display \eqref{eqn:exterior-decomp} together with Lemma \ref{lem:M-bub-energy} implies that
\EQ{
{E}({u}(c_n) ; \rho_n, \infty)&= {E}\left({u}\left(c_n\right)-u^*-{\mathcal { W }}(\vec{\iota}_n, \vec{\lambda}_n)+u^*+{\mathcal { W }}(\vec{\iota}_n, \vec{\lambda}_n) ;  \rho_n, \rho\left(c_n\right)\right) \\ & +{E}\left({u}\left(c_n\right)-u^*-{\mathcal { W }}(\vec{\iota}_n, \vec{\lambda}_n)+u^*+{\mathcal { W }}(\vec{\iota}_n, \vec{\lambda}_n) ; \rho(c_n), \infty\right) \\
&=N {E}(W)+{E}(u^*)+o_n(1) \text { as } n \rightarrow \infty.
}
Using \eqref{eq:energy-limit} when $T_+<\infty$,  \eqref{eq:energy-limit-global} when $T_+=\infty$, the above display implies that 
\EQ{
E(u(c_n);0, \rho_n) = o_n(1),\text{ as } n\to \infty.
}
Note that \eqref{eqn:exterior-decomp} also implies that
\EQ{\label{eqn:exterior-energy}
\tilde{E}({u}(c_n) ; \rho_n, \infty)=N \tilde{E}(W)+\tilde{E}(u^*)+o_n(1) \text{ as } n \rightarrow \infty
}
and since $\rho_n<2\rho_n\ll \lam_{n,1}$, $u^*=0$ when $T_+=\infty$, and $\|u^*\|_{\mathcal{E}(0,2\rho_n)}=o_n(1)$ when $T_+<\infty$ we get that
\EQ{\label{eqn:annulus}
\tilde{E}({u}(c_n) ; \rho_n, 2\rho_n)=o_n(1) \text{ as } n \rightarrow \infty.
}
Combining \eqref{eqn:exterior-energy} with $\bfd(c_n)=\eta$, yields
\EQ{
\tilde{E}({u}(c_n))=N \tilde{E}(W)+\tilde{E}(u^*)+O(\eta^2) \text{ as } n \rightarrow \infty.
}
Therefore, 
\EQ{\label{eqn:interior}
\tilde{E}({u}(c_n) ; 0,\rho_n) = O(\eta^2),\text{ as }n\to \infty,
}
Therefore, \eqref{eqn:annulus}, \eqref{eqn:interior} and Hardy's inequality imply that $v_n = u(c_n) \chi_{\rho_n}$ with $\chi\in C^\infty_c([0,2))$ satisfies $\|v_n\|_{\mathcal{E}}\lesssim \eta$. If we choose $\eta>0$ small enough (but fixed), then by Lemma \ref{lem:basic-trapping} we get 
\EQ{
\|u(c_n)\|^2_{\calE(0,\rho_n)}\leq \tilde{E}(v_n) \lesssim E(v_n) \lesssim E(u(c_n);0,\rho_n) + \|u(c_n)\|^2_{\calE(\rho_n,2\rho_n)} = o_n(1)
}
which implies that $\|u(c_n)\|_{\calE(0,\rho_n)}=o_n(1)$ as $n\to \infty.$ Therefore, we get
\EQ{
\eta=\bfd(c_n) &\leq \|{u}\left(c_n\right)-u^*-{\mathcal { W }}(\vec{\iota}_n, \vec{\lambda}_n)\|_{\calE} +\sum_{j=0}^N \left(\frac{\lam_{n,j}}{\lam_{n,j+1}}\right)^{\frac{D-2}{4}}\\
&\lesssim \|{u}\left(c_n\right)-u^*-{\mathcal { W }}(\vec{\iota}_n, \vec{\lambda}_n)\|_{\calE(0,\rho_n)}+\|{u}\left(c_n\right)-u^*-{\mathcal { W }}(\vec{\iota}_n, \vec{\lambda}_n)\|_{\calE(\rho_n,\infty)}+\veps_n\\
&\lesssim o_n(1) + \varepsilon_n\to 0
}
as $n\to \infty$ which contradicts that $\eta>0.$ Therefore $K\geq 1.$
\end{proof}

\begin{rem} \label{rem:collision} 
Without loss of generality, we assume that a collision interval $[a_n,b_n]\in \calC_K(\veps_n,\eta)$ satisfies $\bfd(a_n)=\veps_n$, $\bfd(b_n) = \eta$, and $\bfd(t) \in [\veps_n, \eta]$ for each $t \in [a_n, b_n]$. Furthermore, given parameters $\veps_n$ and $\eta$ we will often enlarge $\veps_n$ and shrink $\eta$ by choosing another sequence $\veps_n \le \ti \veps_n  \to 0$, and $0< \ti \eta \le \eta$, resulting in a small collision interval $[\ti a_n, \ti b_n]  \subset [a_n, b_n]$ and $[\ti a_n, \ti b_n]\subset~\calC_{K}(\ti \veps_n, \ti \eta)$. Since $a_n,b_n\to T_+$, passing to a further subsequence, we may assume that the collision intervals are disjoint. 
\end{rem} 
\subsection{Decomposition of the solution} 
In this section, we will perform a modulation analysis on the sequence of collision intervals. Amongst other things, in the Lemma below we will define two scales $\alp(t)$ and $\nu(t)$ that capture the bubbling region and separate the $K$ ``interior" bubbles with the $(N-K)$ ``exterior" bubbles respectively. This can be seen in equations \eqref{eqn:no-neck-nu} and \eqref{eqn:scale-separation} below.

\begin{lem}\label{lem:basic-modulation}
Let $K\geq 1$ be as in Lemma \ref{lem:K-exist}. There exist constants $C_0,\eta>0$, a sequence $\veps_n \rightarrow 0$, and sequences $a_n, b_n \rightarrow \infty$ satisfying Definition \ref{def:K-choice} with $\mathbf{d}\left(a_n\right)=\veps_n, \mathbf{d}\left(b_n\right)=\eta$ and $\mathbf{d}(t) \in\left[\veps_n, \eta\right]$ for all $t \in\left[a_n, b_n\right]$ such that the following is true. There exist signs $\vec{\iota} \in\{-1,1\}^N$, a function $\vec{\lambda}=\left(\lambda_1, \ldots, \lambda_N\right) \in C^1\left(\cup_{n \in \mathbb{N}}\left[a_n, b_n\right] ;(0, \infty)^N\right)$, sequences $\alpha_n \rightarrow 0$ and $\nu_n \rightarrow 0$, such that the functions,
\begin{align}
& \nu: \cup_{n \in \mathbb{N}}\left[a_n, b_n\right] \rightarrow(0, \infty), \quad \nu(t):=\nu_n \lambda_{K+1}(t), \quad \text { for } \quad t \in\left[a_n, b_n\right], \label{defn:nu}\\
& \alpha: \cup_{n \in \mathbb{N}}\left[a_n, b_n\right] \rightarrow(0, \infty), \quad \alpha(t):=\left\{\begin{array}{l}
\alpha_n \sqrt{T_{+}-t} \text {, if } T_{+}<\infty \\
\alpha_n \sqrt{t} \text {, if } T_{+}=\infty
\end{array} \quad, \text { for } t \in\left[a_n, b_n\right],\right. \label{defn:alpha}\\
& u^*(t):=\left\{\begin{array}{l}
\left(1-\chi_{\alpha(t)}\right)u(t) \text {, if } T_{+}<\infty \\
\quad 0 \quad \quad \quad \quad \quad   \text {, if } T_{+}=\infty 
\end{array}\right. \text {, for } t \in\left[a_n, b_n\right] \label{defn:u^*}, 
\end{align}
and
\begin{align}
g: \cup_{n \in \mathbb{N}}\left[a_n, b_n\right] \rightarrow \mathcal{E};\quad  g(t) &= u(t)-u^*(t) -{\mathcal { W }}(\vec{\iota}, \vec{\lambda}(t)),\label{eq:g-def}
\end{align}
satisfy the following orthogonality conditions
\EQ{
\left\langle\mathcal{Z}_{\underline{\lambda_j(t)}} \mid g(t)\right\rangle & =0,  \quad \forall t \in\left[a_n, b_n\right],\quad \forall n\in \N \label{eq:g-ortho};
}
and estimates,
\begin{align}
\lim _{n \rightarrow \infty} \sup _{t \in\left[a_n, b_n\right]}&\left(\frac{\nu(t)}{\lambda_{K+1}(t)}+\sum_{j=K+1}^{N-1} \frac{\lambda_j(t)}{\lambda_{j+1}(t)}+\frac{\lambda_N(t)}{\alpha(t)}+\tilde{E}\left(u(t) ; \frac{1}{4} \nu(t), 4 \nu(t)\right)\right)=0, \label{eqn:no-neck-nu}\\
C_0^{-1} \mathbf{d}(t)&\leq\|{g}(t)\|_{\mathcal{E}}+\sum_{j=1}^{N-1}\left(\frac{\lambda_j(t)}{\lambda_{j+1}(t)}\right)^{\frac{D-2}{4}} \leq C_0 \mathbf{d}(t)\label{eq:d-g-lam}, \\
\abs{\lam_j'(t)} &\le \frac{C_0}{\lambda_j(t)} \bfd(t),\text{ for all }j\in \{1,\ldots,N\} \label{eq:lambda'-bound}
\end{align}
for all $t \in\left[a_n, b_n\right]$ and all $n \in \mathbb{N}$. Furthermore, for any sequence $s_n \in\left[a_n, b_n\right]$ 
\EQ{
\lim _{n \rightarrow \infty} {E}\left(u\left(s_n\right) ; \nu(s_n), \infty\right)=(N-K) {E}(Q)+{E}\left(u^*\right)\label{eqn:scale-separation}
}
and, if we denote $g^{\mathrm{ext}}_n=u(s_n)-u^*(s_n)-\mathcal{W}(\iota_{K+1}, \ldots, \iota_N,\lambda_{K+1}(s_n), \ldots, \lambda_N(s_n))$, then 
\EQ{\label{eqn:exterior-bubble}
\lim_{n \rightarrow \infty}\left( \| g^{\mathrm{ext}}_n \|_{\mathcal{E}(r \geq \nu(s_n))} +\sum_{j=K+1}^N\left(\frac{\lambda_j(s_n)}{\lambda_{j+1}(s_n)}\right)^{\frac{D-2}{4}}\right)=0. 
}
\end{lem}
\begin{proof} 
We closely follow the proof of Lemma 5.9 in \cite{lawrie-harmonic-map} and restrict ourselves to the case when $T_+<\infty$ since the case when $T_+=\infty$ is similar and less cumbersome due to the fact that $u^*=0.$ 

First by Definition \ref{def:K-choice} and Lemma \ref{lem:K-exist}, we can find parameters $a_n, b_n, \veps_n, \eta$, and $K \in\{1, \ldots, N\}$. As mentioned in Remark \ref{rem:collision}, we will shrink $\eta$ and enlarge $\veps_n$, but by abuse of notation we will denote the resulting sub-intervals again by $[a_n,b_n]$.

Next, we define the scales $\alp(t)$ and $\nu(t)$ as in \eqref{defn:alpha} and \eqref{defn:nu} respectively. To this end, using Definition \ref{def:proximity}, for all $n\in \N$, there exist scales $\rho_K(t) \ll \mu_{K+1}(t) \ll \cdots \ll \mu_N(t) \ll\left(T_{+}-t\right)^{\frac{1}{2}}$ and signs $\vec{\sigma}(t) \in\{-1,1\}^{N-K}$ for $t \in\left[a_n, b_n\right]$, such that the error function $\tilde{g}_{\rho_K}(t)$ defined on the region $r \in\left(\rho_K(t), \infty\right)$ by the identity
\EQ{
u(t)-u^*=\mathcal{W}\left(\vec{\sigma}(t), \vec{\mu}(t)\right)+\tilde{g}_{\rho_K}(t)
}
satisfies 
\EQ{\label{eqn:d(rho_K) small}
\mathbf{d}\left(t ; \rho_K(t)\right) \simeq\left\|\tilde{g}_{\rho_K}(t)\right\|_{\mathcal{E}\left(\rho_K(t), \infty\right)}^2+\sum_{j=K}^N\left(\frac{\mu_j(t)}{\mu_{j+1}(t)}\right)^{\frac{D-2}{2}} \lesssim \veps_n^2
}
where by convention $\mu_K(t):=\rho_K(t)$ and $\mu_{N+1}(t):=\left(T_{+}-t\right)^{\frac{1}{2}}$. Since we know that 
\begin{align*}
\lim_{n \rightarrow \infty}\sup_{t \in\left[a_n, b_n\right]}\mathbf{d}_K\left(t ; \rho_K(t)\right)=0,    
\end{align*}
and 
\EQ{
\lim _{n \rightarrow \infty} \sup _{t \in\left[a_n, b_n\right]} \|\mathcal{W}\left(\vec{\sigma}(t), \vec{\mu}(t)\right)\|_{\mathcal{E}(\nu_{n, 1} {\mu}_{K+1}(t), \nu_{n, 2} {\mu}_{K+1}(t))} =0,
}
for any two sequences $\nu_{n,1}\ll \nu_{n,2}\ll 1$, we can choose a sequence $\nu_n\to 0$ such that for any $A>1$ we have
\EQ{\label{defn:seq-nu_n}
\rho_K(t) \leq \nu_n \mu_{K+1}(t), \text{ and } \lim _{n \rightarrow \infty} \sup _{t \in\left[a_n, b_n\right]} \|u(t)-u^*\|_{\mathcal{E}(A^{-1}\nu_n \mu_{K+1}(t), A \nu_n \mu_{K+1}(t))}=0.
}
Next, using $\rho(t)$ as in \eqref{eq:radiation} and \eqref{eqn:d(rho_K) small} we can choose a sequence $\alp_n\to 0$ such that 
\EQ{\label{def:alpha_n-seq}
\lim _{n \rightarrow \infty} \sup _{t \in\left[a_n, b_n\right]}\left(\frac{\mu_N(t)}{\alpha_n\left(T_{+}-t\right)^{\frac{1}{2}}}+\frac{\rho(t)}{\alpha_n\left(T_{+}-t\right)^{\frac{1}{2}}}\right)=0.
}
Thus, we define $\alpha(t):=\alpha_n\left(T_{+}-t\right)^{\frac{1}{2}}$ for $t \in\left[a_n, b_n\right]$. If $K=N$ we may assume that $\alpha_n \geq \nu_n$. 
\newline 
Next, we define $u^*(t)$. Let
\EQ{\label{eqn:u^*(t)-defn}
u^*(t):=\left(1-\chi_{\alpha(t)}\right)u(t),
}
then from \eqref{eq:radiation} and $\lim _{t \rightarrow T_{+}} \|u^*\|_{\cE(0,\gamma(t))}=0$ for any $\gamma(t) \rightarrow 0$ as $t \rightarrow T_{+}$, we deduce that 
\EQ{\label{eqn:u^*(t)-u^*}
\lim _{t \rightarrow T_{+}}\left\|u^*(t)-u^*\right\|_{\mathcal{E}}=0.
}
Thus, using $u(t)-u^*(t)=\chi_{\alp(t)}u(t)$, \eqref{eq:N-bubbles-bu} and \eqref{eq:en-annulus} we get
\EQ{\label{eqn:u^*-energy-identity}
\lim _{n \rightarrow \infty} \sup _{t \in\left[a_n, b_n\right]}\left|E\left(u(t)-u^*(t)\right)-N E(Q)\right|=0.
}
Then, by definition $\bfd(t)$ we can find some signs $\vec{\tilde{\iota}}(t)\in \{-1,1\}^N$ and scale $\vec{\tilde{\lam}}(t)\in (0,\infty)^N$ such that the decomposition
\EQ{\label{def:tilde-g}
\tilde{g}(t):=u(t)-u^*-\calW(\vec{\tilde{\iota}}(t),\vec{\tilde{\lam}}(t))
}
satisfies 
\EQ{\label{eqn:error-tilde-g}
\mathbf{d}(t) \leq\|\widetilde{g}(t)\|_{\mathcal{E}}+\sum_{j=1}^N\left(\frac{\widetilde{\lambda}_j(t)}{\widetilde{\lambda}_{j+1}(t)}\right)^{\frac{D-2}{2}} \leq 2 \mathbf{d}(t) \leq 2 \eta
}
with the convention that $\tilde{\lam}_{N+1}(t)=(T_+-t)^{\frac{1}{2}}.$ Using, \eqref{eqn:u^*(t)-u^*}, \eqref{def:tilde-g} and \eqref{eqn:error-tilde-g} we get that
\EQ{\label{eqn:dist-upper-bound}
\operatorname{dist}_N(u(t)-u^*(t)) \leq C_0 \bfd(t) + o_n(1) \leq 2C_0 \eta,
}
where $\operatorname{dist}_N$ was defined in \eqref{def-d}. Then as explained in Remark \ref{rem:collision}, we shrink $\eta>0$ small enough such that we can apply Lemma \ref{lem:mod-static} to the function $u(t)-u^*(t)$, yielding the existence of $\vec{\lam}(t)\in (0,\infty)^N$ defined on $\cup_{n\in \N}[a_n,b_n]$ and signs $\vec{\iota}\in \{-1,1\}^N$ (which can be taken to be independent of time $t \in [a_n,b_n]$ using the continuity of the flow and independent of $n$ by passing to a subsequence of $[a_n,b_n]$) and $g(t)$ such that 
\EQ{
u(t)-u^*(t)=\calW(\vec{\iota},\vec{\lam})+g(t),\quad \left\langle\mathcal{Z}_{\underline{\lambda_j(t)}} \mid g(t)\right\rangle & =0,  \quad \forall t \in\left[a_n, b_n\right],\quad \forall n\in \N,
}
and
\EQ{
\operatorname{dist}_N(u(t)-u^*(t))\leq \|g(t)\|_{\cE} + \sum_{j=1}^{N-1}\left(\frac{\lam_j(t)}{\lam_{j+1}(t)}\right)^{\frac{D-2}{4}}\leq C_0 \operatorname{dist}_N(u(t)-u^*(t)).
}
Then using \eqref{eqn:u^*(t)-u^*} and \eqref{eqn:dist-upper-bound} we get
\EQ{
\mathbf{d}(t)-\xi_{1, n} \leq\|g(t)\|_{\mathcal{E}}+\sum_{j=1}^N\left(\frac{\lambda_j(t)}{\lambda_{j+1}(t)}\right)^{\frac{D-2}{4}} \leq C_0 \mathbf{d}(t)+\xi_{1, n}
}
for some sequence $\xi_{1,n}\to 0$ as $n\to \infty$. Thus enlarging $\veps_n$ so that $\veps_n\geq 2\xi_{1,n}$ for all $n\in \N$ as in Remark \ref{rem:collision} we deduce \eqref{eq:d-g-lam}.

Next, we establish \eqref{eqn:no-neck-nu}. As a first step, we will show that for each $j=1, \ldots, N$,
\EQ{\label{eqn:tilde-nu-lam_j}
\lim _{n \rightarrow \infty} \sup _{t \in\left[a_n, b_n\right]}\left(\frac{\widetilde{\nu}(t)}{\lambda_j(t)}+\frac{\lambda_j(t)}{\widetilde{\nu}(t)}\right)=0,
}
where $\widetilde{\nu}(t):=\nu_n \mu_{K+1}(t)$. If not, then there exist a constant $C>0$, an index $j \in\{1, \ldots, N\}$, a subsequence of the interval $\left[a_n, b_n\right]$ and a sequence $s_n \in\left[a_n, b_n\right]$ such that
\EQ{
C^{-1} \widetilde{\nu}\left(s_n\right) \leq \lambda_j\left(s_n\right) \leq C \widetilde{\nu}\left(s_n\right)
}
By \eqref{eq:d-g-lam}, for all $\eta>0$ small enough there exist $\delta=\delta(\eta)$, and $R=R(\eta)>0$ such that for all $n\in \N$ 
\EQ{
\delta \leq \|u\left(s_n\right)-u^*\left(s_n\right)\|_{\cE(R^{-1} \lambda_j\left(s_n\right), R \lambda_j\left(s_n\right))}  \leq \|u\left(s_n\right)-u^*\left(s_n\right)\|_{\cE(C^{-1} R^{-1} \widetilde{\nu}\left(s_n\right), R C \widetilde{\nu}\left(s_n\right))}
}
which contradicts the second equation in \eqref{defn:seq-nu_n}. By \eqref{defn:seq-nu_n}, if we define the function
\EQ{
w(t)= (1-\chi_{\tilde{\nu}(t)}) (u(t)-u^*(t))
}
then 
\EQ{\label{eqn:index-j_0}
\|w(t)-\cW(\vec{\sigma}(t),\vec{\mu}(t))\|_{\cE}^2+\sum_{j=K+1}^{N-1}\left(\frac{\mu_{j}(t)}{\mu_{j+1}(t)}\right)^{\frac{D-2}{2}} = o_n(1).
}
On the other hand, by \eqref{eqn:tilde-nu-lam_j} we can find an index $j_0 \in\{1, \ldots, N-1\}$ such that 
\EQ{
\left\|w(t)-\mathcal{W}\left(\iota_{j_0}, \ldots, \iota_N, \lambda_{j_0}(t), \ldots, \lambda_N(t)\right)\right\|_{\mathcal{E}}^2+\sum_{j=j_0}^{N-1}\left(\frac{\lambda_j(t)}{\lambda_{j+1}(t)}\right)^{\frac{D-2}{2}} \leq C_0 \eta.
}
Then Lemma \ref{lem:bub-config} yields $j_0=K+1, \vec{\sigma}(t)=\left\{\iota_{K+1}, \ldots, \iota_K\right\}$, and after shrinking $\eta>0$ we get that
\EQ{
\sup _{t \in\left[a_n, b_n\right]}\left|\frac{\lambda_j(t)}{\mu_j(t)}-1\right| \leq \frac{1}{4}.
}
Thus, defining $\nu(t):=\nu_n \lambda_{K+1}(t)$ we deduce \eqref{eqn:no-neck-nu} from \eqref{eqn:d(rho_K) small}, \eqref{defn:seq-nu_n} and \eqref{def:alpha_n-seq}.

Next, we prove \eqref{eqn:scale-separation}. Let $s_n \in\left[a_n, b_n\right]$ and $R_n$ such that $\nu\left(s_n\right) \leq R_n \ll \lambda_{K+1}\left(s_n\right)$. If $K<N$ then $R_n \ll \alpha\left(s_n\right)$, thus, using \eqref{eqn:index-j_0} and \eqref{def:alpha_n-seq} we get
\EQ{
E(u(s_n) ; R_n, \alpha(s_n)) \rightarrow(N-K) E(Q) \text { as } n \rightarrow \infty.
}
Since by \eqref{def:alpha_n-seq}, \eqref{eqn:u^*(t)-defn} and \eqref{eqn:u^*(t)-u^*},
\EQ{
E(u(s_n) ; \alpha(s_n), \infty) \rightarrow E(u^*) \text { as } n \rightarrow \infty,
}
combining this with the previous display yields \eqref{eqn:scale-separation}. If $K=N$ then $E(u(s_n) ; R_n, \infty) \rightarrow E(u^*)$. Finally, the equation \eqref{eqn:exterior-bubble} follows from \eqref{eqn:index-j_0}.

Now we prove the dynamical estimate \eqref{eq:lambda'-bound}. Differentiating in time the orthogonality conditions~\eqref{eq:g-ortho} yields, for each $j = 1, \dots, K$, the identity, 
\begin{align}\label{eq:diff-ortho}
0 & =-\frac{\lambda_j^{\prime}}{\lambda_j}\left\langle\underline{\Lambda} \mathcal{Z}_{\underline{\lambda_j}} \mid g\right\rangle+\left\langle\mathcal{Z}_{\underline{\lambda_j}} \mid \partial_t g\right\rangle.
\end{align}
Next, differentiating in time the expression for $g(t)$ in~\eqref{eq:g-def}
\EQ{\label{eq:g-eq} 
\p_t g &= \chi_\alpha \partial_t u(t) -u(t) \frac{\alpha^{\prime}(t)}{\alpha(t)}\left(r \partial_r \chi\right)(\cdot / \alpha(t))+\sum_{j=1}^N \iota_j \lam_j' \Lam W_{\U{\lam_j}}  \\
& =  \chi_\alpha \left(\De u+  f(u)\right) -u(t) \frac{\alpha^{\prime}(t)}{\alpha(t)}\left(r \partial_r \chi\right)(\cdot / \alpha(t)) + \sum_{j=1}^N \iota_j \lam_j' \Lam W_{\U{\lam_j}}\\
&= \Delta\left(\chi_\alpha u\right)+ f\left(\chi_\alpha u\right)+\sum_{j=1}^N \iota_j \lambda_j^{\prime} \Lambda W_{\underline{\lambda_j}} \\ 
&\quad -u \Delta \chi_\alpha-2 \partial_r u \partial_r \chi_\alpha-u(t) \frac{\alpha^{\prime}(t)}{\alpha(t)}\left(r \partial_r \chi\right)(\cdot / \alpha(t)) +\left(f(u) \chi_\alpha-f\left(\chi_\alpha u\right)\right)\\
& = \De g + f'(\calW(\vec{\iota},\vec{\lambda}))g + f(\calW(\vec{\iota},\vec{\lambda}))- \sum_{j=1}^{N}\iota_j f(W_{{\lam_j}}) \\
&\quad  +  f(\calW(\vec{\iota},\vec{\lambda})+g)- f(\calW(\vec{\iota},\vec{\lambda}))-f'(\calW(\vec{\iota},\vec{\lambda}))g \\
&\quad +\left(-u\Delta \chi_\alpha-2\partial_r u \partial_r \chi_\alpha -u(t) \frac{\alpha^{\prime}(t)}{\alpha(t)}\left(r \partial_r \chi\right)(\cdot / \alpha(t))+f(u)\chi_\alpha-f(\chi_\alpha u)\right)+\sum_{j=1}^N \iota_j \lam_j' \Lam W_{\U{\lam_j}}\\
&= -\mathcal{L}_{\calW} g + f_{\bfi}(\vec \iota, \vec \lam)+ f_{\bfq}(\vec \iota, \vec \lam, g) + \phi(u,\alpha)+ \sum_{j=1}^N \iota_j \lam_j' \Lam W_{\U{\lam_j}},
}
where
\EQ{
\calL_{\calW} &:= -\Delta -f'(\calW(\vec{\iota},\vec{\lambda})),\quad f_{\bfi}(\vec \iota, \vec \lam) := f\big( \mathcal{W}( \vec \iota, \vec \lam)\big) - \sum_{j =1}^{N} \iota_j  f(W_{\lam_{j}}) \\
f_{\bfq}(\vec \iota, \vec \lam, g) &:= f\big( \mathcal{W}( \vec \iota, \vec \lam) + g \big)  - f\big(\mathcal{W}( \vec \iota, \vec \lam) \big) -  f'\big( \mathcal{W}( \vec \iota, \vec \lam)\big) g\\
\phi(u,\alpha)&:=-u\Delta \chi_\alpha-2\partial_r u \partial_r \chi_\alpha -u(t) \frac{\alpha^{\prime}(t)}{\alpha(t)}\left(r \partial_r \chi\right)(\cdot / \alpha(t))+f(u)\chi_\alpha-f(\chi_\alpha u).
}
The subscript  $\bfi$ above stands for ``interaction'' and $\bfq$ stands for ``quadratic.''  For each $j \in \{1, \dots, K\}$ we pair~\eqref{eq:g-eq} with $\calZ_{\U{\lam_j}}$ and use~\eqref{eq:diff-ortho} to obtain the following system 
\begin{multline} 
\iota_j \lam_j' \Big( \La \Lam W \mid \calZ \Ra - \frac{\iota_j}{\lam_j} \La \U{\Lam}\calZ_{\U{\lam_j}} \mid g\Ra \Big) + \sum_{i \neq j} \iota_i  \lam_i'\La \Lam W_{\U{\lam_i}} \mid \calZ_{\U{\lam_j}} \Ra \\
= \La \calL_{\calW} g \mid \calZ_{\U{\lam_j}} \Ra - \La f_{\bfi}( \vec \iota, \vec \lam)\mid \calZ_{\U{\lam_j}} \Ra - \La f_{\bfq}(\vec \iota, \vec \lam, g)\mid \calZ_{\U{\lam_j}} \Ra - \La \phi(u,\alpha) \mid \calZ_{\U{\lam_j}}\Ra. 
\end{multline} 
The above system is diagonally dominant for all sufficiently small  $\eta_0>0$ due to \eqref{eq:ZQ} and \eqref{eq:d-g-lam}. Hence, it is invertible. Next, we estimate each term
\EQ{\label{eq:fi}
&\Big|  \La \calL_{\calW} g \mid \calZ_{\U{\lam_j}} \Ra  \Big| \lesssim \frac{1}{\lam_j}  \|g \|_{\E},\\
&\Big| \La f_{\bfi}(\vec \iota, \vec \lam) \mid \calZ_{\U{\lam_j}}  \Ra\Big|  \lesssim \frac{1}{\lam_j} \Big( \Big(\frac{\lam_{j}}{\lam_{j+1}} \Big)^{\frac{D-2}{2}} + \Big(\frac{\lam_{j-1}}{\lam_{j}} \Big)^{\frac{D-2}{2}} \Big) \lesssim \frac{1}{\lam_j} (\bfd(t)^2 + o_n(1)), \\
&\Big| \La f_{\bfq}(\vec \iota, \vec \lam, g) \mid \calZ_{\U{\lam_j}}   \Ra \Big| \lesssim \frac{1}{\lam_j} (\bfd(t) + o_n(1)), \quad \Big| \La \phi(u,\alpha) \mid \calZ_{\U{\lam_j}}\Ra \Big| \lesssim \frac{1}{\lam_j}  o_n(1),
}
where the first inequality follows by integration by parts and Cauchy-Schwarz, the second inequality follows from a computation analogous to Lemma 2.21 in \cite{lawrie-wave}, the third inequality follows from a Taylor expansion, and the fourth inequality follows from \eqref{eqn:no-neck-nu}. Thus, collecting all the above estimates, we get
\EQ{
\abs{\lam_j'} \lesssim \frac{1}{\lam_j} \big( \bfd(t) + \zeta_{3, n} \big) 
}
for some sequence $\zeta_{3, n} \to 0$ as $n \to \infty$. Then~\eqref{eq:lambda'-bound} follows by enlarging $\veps_n$. 
\newline
\end{proof}



\begin{lem}\label{lem:delta-to-d}
There exists a constant $\eta_0>0$ having the following property. Let $t_n \in\left[a_n, b_n\right]$ and let $\mu_n$ be a positive sequence satisfying the conditions:
\begin{enumerate}
    \item $\lim _{n \rightarrow \infty} \frac{\mu_n}{\lam_{K+1}\left(t_n\right)}=0,$
    \item $\mu_n \geq \nu\left(t_n\right)$ or $\left\|{u}\left(t_n\right)\right\|_{\mathcal{E}\left(\mu_n, \nu\left(t_n\right)\right)} \leq \eta_0$, and
    \item $\lim _{n \rightarrow \infty} \delta_{\mu_n}\left(u(t_n)\right)=0$, where $\delta_{\mu_n}$ is defined in \eqref{defn:loc-dist}.
\end{enumerate}
Then $\lim _{n \rightarrow \infty} \mathbf{d}\left(t_n\right)=0$.
\end{lem}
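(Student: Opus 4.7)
The plan is to combine the interior bubble decomposition supplied by hypothesis (iii) with the exterior decomposition from Lemma \ref{lem:defn-nu} to assemble a global $N$-bubble configuration approximating $u(t_n)-u^*$ in $\mathcal{E}$, with the key quantitative input being an energy identity that simultaneously forces the interior bubble count to equal $K$ and the annulus energy to vanish.

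First, unpacking $\delta_{\mu_n}(t_n)\to 0$ from the definition \eqref{defn:loc-dist} and, since the integer $M\ge 0$ appearing in the infimum is uniformly bounded by $\|u(t_n)\|_{\mathcal{E}}^2/\|W\|_{\mathcal{E}}^2$, passing to a subsequence along which it is constant, I extract signs $\iota_1,\dots,\iota_M\in\{-1,1\}$ and scales $\lambda_{n,1}<\dots<\lambda_{n,M}$ with $\lambda_{n,M}/\mu_n\to 0$ and all consecutive ratios vanishing, so that $\mathcal{W}^n_{\mathrm{int}}:=\sum_{j=1}^M\iota_j W_{\lambda_{n,j}}$ satisfies $\|u(t_n)-\mathcal{W}^n_{\mathrm{int}}\|_{\mathcal{E}(r\le\mu_n)}\to 0$. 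Lemma \ref{lem:defn-nu} on $[a_n,b_n]$ produces the exterior configuration $\mathcal{W}^n_{\mathrm{ext}}:=\sum_{j=K+1}^N\sigma_{n,j}W_{\mu_j(t_n)}$ with $\nu(t_n)\ll\mu_{K+1}(t_n)\ll\dots\ll\mu_N(t_n)$ and a residual $g(t_n)\to 0$ in $\mathcal{E}$, giving $\|u(t_n)-u^*-\mathcal{W}^n_{\mathrm{ext}}\|_{\mathcal{E}(r\ge 2\nu(t_n))}\to 0$. Hypothesis (1), $\mu_n/\mu_{K+1}(t_n)\to 0$, then forces $\lambda_{n,M}\ll\mu_n\ll\mu_{K+1}(t_n)$, so the concatenated scales of $\mathcal{W}^n_{\mathrm{comb}}:=\mathcal{W}^n_{\mathrm{int}}+\mathcal{W}^n_{\mathrm{ext}}$ are asymptotically orthogonal.

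Next comes the energy count. Combining Lemma \ref{lem:M-bub-energy}, continuity of $E$ on $\mathcal{E}$, and the bubbling approximations yields $E(u(t_n);0,\mu_n)=ME(W)+o(1)$. Using Propositions \ref{prop:seq-ftbu} and \ref{prop:seq-global} together with the shrinking-region vanishing of $u^*$'s energy (in the finite-time case, since $\mu_N(t_n)\to 0$) or $u^*\equiv 0$ (in the global case), one gets $E(u(t_n);0,\alpha\sqrt{T_*-t_n})\to NE(W)$ and $E(u(t_n);\nu(t_n),\alpha\sqrt{T_*-t_n})\to(N-K)E(W)$ for any fixed $\alpha>0$. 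Subtracting produces $E(u(t_n);\mu_n,\nu(t_n))\to(K-M)E(W)$. In Case 1 ($\mu_n\ge\nu(t_n)$) the annulus is empty, so $M=K$ at once. In Case 2, choosing $\eta_0$ small enough that the localized coercivity from Lemma \ref{lem:localized-coercivity} (extended to an annulus by cut-off) applies, one finds $0\le E(u(t_n);\mu_n,\nu(t_n))\lesssim\eta_0^2<E(W)$, which pins the integer discrepancy $K-M=0$; consequently the annulus energy vanishes in the limit and the coercivity lemma upgrades this to $\|u(t_n)\|_{\mathcal{E}(\mu_n,\nu(t_n))}\to 0$.

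Finally I bound $\|u(t_n)-u^*-\mathcal{W}^n_{\mathrm{comb}}\|_{\mathcal{E}}$ by partitioning $(0,\infty)$ into $r\le\mu_n$, the transition annuli, and $r\ge 2\nu(t_n)$. On $r\le\mu_n$ the interior approximation controls $u(t_n)-\mathcal{W}^n_{\mathrm{int}}$, while $\|u^*\|_{\mathcal{E}(r\le\mu_n)}\to 0$ (dominated convergence since $\mu_n\to 0$ in the finite-time case, trivially in the global case) and $\|\mathcal{W}^n_{\mathrm{ext}}\|_{\mathcal{E}(r\le\mu_n)}^2\lesssim\sum_j(\mu_n/\mu_j(t_n))^{D-2}\to 0$. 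On $(\mu_n,\nu(t_n))$ in Case 2, the previous step gives smallness of $u(t_n)$ in $\mathcal{E}$, and the bubble tails of $\mathcal{W}^n_{\mathrm{int}}$, $\mathcal{W}^n_{\mathrm{ext}}$ are small by scale separation. On $(\nu(t_n),2\nu(t_n))$, smallness of $u(t_n)$ follows from \eqref{eq:dK annulus}. On $r\ge 2\nu(t_n)$ the exterior residual $g(t_n)$ is small and $\|\mathcal{W}^n_{\mathrm{int}}\|_{\mathcal{E}(r\ge 2\nu(t_n))}^2\lesssim(\lambda_{n,M}/\nu(t_n))^{D-2}\to 0$. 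Assembling the pieces and recalling that the scales of $\mathcal{W}^n_{\mathrm{comb}}$ are asymptotically orthogonal, together with $M+(N-K)=N$, yields $\mathbf{d}(t_n)\to 0$ as desired. The main obstacle is the energy-counting step: one must fix $\eta_0$ small enough (relative to $E(W)$) that the integer $M$ is pinned at $K$, and simultaneously verify that Lemma \ref{lem:M-bub-energy} and the sequential-bubbling propositions decouple the energies of $u^*$ and the exterior bubbles cleanly, so that all the $o(1)$ terms in the expansion are under control.
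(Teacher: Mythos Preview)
Your proposal is correct and follows essentially the same strategy as the paper: extract the interior $M$-bubble configuration from $\delta_{\mu_n}(t_n)\to 0$, use the exterior $(N-K)$-bubble decomposition from Lemma~\ref{lem:defn-nu}, pin down $M=K$ by an energy count on the intermediate region, and then glue. The only cosmetic difference is that the paper packages the intermediate annulus as a \emph{global} function $u_n^{(m)}:=u(t_n)-\chi_{\frac12\mu_n}u(t_n)-(1-\chi_{R_n})u(t_n)$ (with $\mu_n\ll R_n\ll\mu_{K+1}(t_n)$, $R_n\ge\nu(t_n)$) and applies the global trapping Lemma~\ref{lem:basic-trapping} directly, which is exactly the ``cut-off extension to an annulus'' of Lemma~\ref{lem:localized-coercivity} that you invoke.
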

\begin{proof} Let $R_n$ be a sequence such that $\mu_n \ll R_n \ll \lam_{K+1}\left(t_n\right)$. Without loss of generality, we can assume $R_n \geq \nu\left(t_n\right)$, since it suffices to replace $R_n$ by $\nu\left(t_n\right)$ for all $n$ such that $R_n<\nu\left(t_n\right)$. By the definition of the localized distance \eqref{defn:loc-dist}, let $M_n, \vec{\iota}_n, \vec{\lambda}_n$ be parameters such that
\EQ{\label{eqn:loc-decomp}
\|u\left(t_n\right)-\mathcal{W}(\vec{\iota}_n, \vec{\lambda}_n)\|_{\E\left(r \leq \mu_n\right)}^2+\sum_{j=1}^{M_n}\left(\frac{\lambda_{n, j}}{\lambda_{n, j+1}}\right)^{\frac{D-2}{2}} \rightarrow 0
}
as $n\to \infty$. Set
$$
\begin{aligned}
{u}_n^{(i)} & :=\chi_{\frac{1}{2} \mu_n} {u}\left(t_n\right), \quad {u}_n^{(o)}  :=\left(1-\chi_{R_n}\right) {u}\left(t_n\right), \quad {u}_n^{(m)} :={u}\left(t_n\right)-{u}_n^{(i)}-{u}_n^{(o)} .
\end{aligned}
$$
Observe that if $\mu_n$ is a positive sequence such that $\lim _{n \rightarrow \infty} {\delta}_{\mu_n}\left(t_n\right)=0$, then
\EQ{\label{eqn:mu_n-annulus}
\lim _{n \rightarrow \infty}\|{u}\left(t_n\right)\|_{\mathcal{E}\left(\frac{1}{2} \mu_n, \mu_n\right)}=0.
}
Combining this with the localized decomposition \eqref{eqn:loc-decomp}, we have
\EQ{\label{eqn:inner-energy}
\lim _{n \rightarrow \infty}\|{u}_n^{(i)}-\mathcal{W}(\vec{\iota}_n, \vec{\lambda}_n)\|_{\mathcal{E}}=0 .
}
Furthermore, as $\bfd_K(t_n;\nu(t_n))\to 0$ from Lemma \ref{lem:basic-modulation}, $\nu\left(t_n\right) \leq R_n \ll \lam_{K+1}\left(t_n\right)$ implies
\EQ{\label{eqn:Rn-annulus}
\lim _{n \rightarrow \infty}\left\|{u}\left(t_n\right)\right\|_{\mathcal{E}\left(R_n, 2 R_n\right)}=0.
}
Thus using the second assumption along with \eqref{eqn:mu_n-annulus} and \eqref{eqn:Rn-annulus} for $n$ large enough we have
\EQ{
\|{u}_n^{(m)}\|_{\mathcal{E}}\leq 2 \eta_0, 
}
which implies from Lemma \ref{lem:basic-trapping} that $0 \leq E({u}_n^{(m)}) \lesssim \eta_0^2$. Using \eqref{eqn:mu_n-annulus} and \eqref{eqn:Rn-annulus}, we get
\EQ{\label{eqn:middle-to-outer}
\limsup _{n \rightarrow \infty}|E({u}(t_n))-E({u}_n^{(i)})-E({u}_n^{(m)})-E({u}_n^{(o)})|=0 .
}
By Lemma \ref{lem:basic-modulation} we have 
\EQ{
\lim_{n\to \infty} E(u_n^{(o)}) = (N-K) E(W) + E(u^*).
}
Combining this with $0 \leq E({u}_n^{(m)}) \lesssim \eta_0^2$ we deduce that $M_n=K$ and $\lim_{n\to \infty} E(u_n^{(m)})=0.$ By Lemma \ref{lem:basic-trapping}, $E(u_n^{(m)})\simeq \|u_n^{(m)}\|^2_{\calE}$ and thus, we get that 
\EQ{\label{eqn:middle-energy}
\lim _{n \rightarrow \infty}\|{u}_n^{(m)}\|_{\mathcal{E}}=0.
}
Thus, combining the proximity to $K$ bubbles in the inner region \eqref{eqn:inner-energy}, no energy lost in the middle region \eqref{eqn:middle-energy}, convergence to $N-K$ bubbles in the outer region since $\bfd_K(t_n;\nu(t_n))=o_n(1)$ with no loss of energy in the neck region \eqref{eqn:middle-to-outer} yields the convergence of the distance $\bfd(t_n)\to 0$ as $n\to \infty$ with parameters $\vec{\lam}_n=(\lam_{n,1},\ldots,\lam_{n,K},\lam_{K+1}(t_n),\ldots,\lam_{N}(t_n))$ and $\vec{\iota}_n=(\iota_{n,1},\ldots,\iota_{n,K},\iota_{K+1},\ldots,\iota_{N})$ with the last $(N-K)$ parameters coming from Lemma \ref{lem:basic-modulation}.
\end{proof}

\section{Conclusion}
We are now in a position to conclude the proof of Theorem~\ref{thm:main}. The idea is to first observe that the length of the collision interval is related to the scale of the $K$-th bubble. This follows from the dynamical estimate \eqref{eq:lambda'-bound} and the fact that $K$ is minimal as in Definition \ref{def:K-choice}. As a consequence, one can identify a suitable sub-interval of the collision interval in which the scale $\lam_K$ does not change much. Finally, combining the previous observation and the fact that the tension is finite, one can deduce a contradiction, which in particular implies that \eqref{eq:dt-conv} and thus Theorem~\ref{thm:main} holds.

For convenience, we assume that whenever $[a_n, b_n] \in \calC_K(\veps_n, \eta)$ then $\bfd(a_n) = \veps_n$, $\bfd(b_n) = \eta$ and $\bfd(t) \in [\veps_n, \eta]$ for all $t \in [a_n, b_n]$. This can always be done by Remark~\ref{rem:collision}. 
\begin{lem} \label{lem:collision-duration} 
For any $\eta>0$, there exist $\veps \in (0, \eta)$, $C_u>0$ and $n_0=n_0(u)\in \N$ with the following property. If $[c, d] \subset [a_n, b_n]$ for some $n\geq n_0$, $\bfd(c)  \le \veps$ and $\bfd(d) \ge \eta$, then, 
\EQ{
  (d- c)^{\frac{1}{2}} \ge C_u^{-1}  \lam_K(c).
  }
\end{lem}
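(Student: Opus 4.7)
The plan is to argue by contradiction, following the collision interval framework of \cite{lawrie-wavemaps}. Suppose the lemma fails for some $\eta \in (0, \eta_0]$. Then for every $\eps, C_u > 0$ there is an interval $[c, d] \subset [a_n, b_n]$ with $\bfd(c) \le \eps$, $\bfd(d) \ge \eta$, yet $(d-c)^{1/2} < C_u^{-1} \lam_K(c)$; a diagonal extraction produces sequences $c_n \le d_n$ in $[a_n, b_n]$ (relabeling) with $\bfd(c_n) \to 0$, $\bfd(d_n) \ge \eta$, and $\delta_n := (d_n - c_n)/\lam_K(c_n)^2 \to 0$. The strategy is to show that on this short time lapse the $K$-th bubble cannot dissolve, so that at every $t \in [c_n, d_n]$ the solution $u(t) - u^*$ is still close to an $(N - K + 1)$-bubble configuration on a suitable exterior region. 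The interval $[c_n, d_n]$ would then belong to $\calC_{K-1}(\eps_n', \eta)$ for some $\eps_n' \to 0$, contradicting the minimality of $K$ in Definition \ref{def:K-choice}.

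The first task is to freeze $\lam_K$: since $\bfd(t) \le \eta_0$ throughout $[a_n, b_n]$, the modulation bound \eqref{eq:lambda'-bound} yields $|\tfrac{d}{dt}\lam_K(t)^2| \le 2 C_0 \eta_0$, which integrates to $\lam_K(t) = (1 + o_n(1))\lam_K(c_n)$ uniformly on $[c_n, d_n]$. Next, Lemma \ref{lem:modulation} at $c_n$ furnishes a decomposition $u(c_n) - u^*(c_n) = \sum_{j=1}^{K} \iota_j W_{\lam_j(c_n)} + \sum_{j=K+1}^{N} \sigma_{n,j} W_{\mu_{n,j}(c_n)} + g_n$ with well-separated scales and $\|g_n\|_\E \to 0$. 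I select scales
$\lam_{K-1}(c_n) \ll r_n \ll \lam_K(c_n) \ll R_n \ll \mu_{K+1}(c_n)$,
say $r_n := \delta_n^{1/3}\lam_K(c_n)$ and $R_n := \nu_n^{1/2}\mu_{K+1}(c_n)$, so that $d_n - c_n \ll r_n^2$ and $R_n \ge \nu(c_n)$. Because the smaller interior bubbles, the error $g_n$, and the exterior bubbles contribute negligibly on the annulus $(r_n/2, 2R_n)$, one has
$\|u(c_n) - u^*(c_n) - \iota_K W_{\lam_K(c_n)}\|_{\E(r_n/2, 2R_n)} \to 0$.

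The heart of the argument is to propagate this one-bubble approximation through $[c_n, d_n]$ using the short-time estimate of Lemma \ref{lem:un-seq} with target $\iota_K W_{\lam_K(c_n)}$ and time-span $t_n := d_n - c_n \ll r_n^2$, obtaining
$\sup_{t \in [c_n, d_n]} \|u(t) - u^*(t) - \iota_K W_{\lam_K(c_n)}\|_{\E(r_n, R_n)} \to 0$.
Combining this with $\bfd_K(t; \nu(t)) \to 0$ on $[a_n, b_n]$ (controlling the $N - K$ exterior bubbles for $r \ge \nu(t)$, with $\nu(t) \le R_n$), and using that $W_{\lam_K(c_n)}$ has negligible tail in $\E(R_n, \infty)$, one concludes $\sup_{t \in [c_n, d_n]}\bfd_{K-1}(t; r_n) \to 0$. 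With the constant curve $\rho_{K-1}(t) := r_n$ in Definition \ref{def:collision}, the interval $[c_n, d_n]$ then lies in $\calC_{K-1}(\eps_n', \eta)$ for some $\eps_n' \to 0$, contradicting the minimality of $K$.

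The main obstacle is the propagation step. Lemma \ref{lem:un-seq} is stated for a single-bubble target, whereas $u(t)$ is a multi-bubble profile perturbed by the body map $u^*$. In the blow-up case $u^* \in \E$ is a fixed element with singular behavior only at the origin by Lemma \ref{lem:body-map}, so on $(r_n, R_n)$ it is regular and essentially static over the short time $t_n$; in the global case $u^* \equiv 0$. The $(K-1)$ smaller interior bubbles live at scales $\ll r_n$ and the $(N - K)$ exterior bubbles at scales $\gg R_n$, so their $\E(r_n/2, 2R_n)$-mass at time $c_n$ is $o_n(1)$. A careful localization of $u(t) - u^*$ to $(r_n/2, 2R_n)$, controlled via Lemma \ref{lem: localized energy density} and Lemma \ref{lem:prop-small-E}, should reduce the multi-bubble situation to the single-bubble content of Lemma \ref{lem:un-seq}, completing the argument.
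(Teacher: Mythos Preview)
Your proof follows essentially the same contradiction strategy as the paper: assume a sequence $[c_n,d_n]$ with $\bfd(c_n)\to 0$, $\bfd(d_n)\ge\eta$, and $(d_n-c_n)^{1/2}\ll\lam_K(c_n)$, freeze $\lam_K$ via \eqref{eq:lambda'-bound}, choose an intermediate scale $r_n$ with $\lam_{K-1}(c_n)+(d_n-c_n)^{1/2}\ll r_n\ll\lam_K(c_n)$, propagate closeness using Lemma~\ref{lem:un-seq}, and conclude $[c_n,d_n]\in\calC_{K-1}$, contradicting the minimality of $K$.

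The difference lies in the propagation step. The paper re-modulates $v(t):=(1-\chi_{r_n})(u(t)-u^*)$ via Lemma~\ref{lem:mod-static}, then partitions $(r_n,\infty)$ into overlapping annuli each containing exactly one of the bubbles $W_{\lam_K},\dots,W_{\lam_N}$, applying Lemma~\ref{lem:un-seq} separately on each piece to drive the modulation error $h(t)$ to zero. You instead apply Lemma~\ref{lem:un-seq} once for the $K$-th bubble on $(r_n,R_n)$, and for the exterior region $r\ge R_n\ge\nu(t)$ you invoke the already-available control from Lemma~\ref{lem:defn-nu} (equivalently, the collision-interval property $\bfd_K(t;\nu(t))\to 0$). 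This is a legitimate economy: the exterior $N-K$ bubbles are stable by construction on $[a_n,b_n]$, so there is no need to re-propagate them.

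Two minor points. First, your explicit choice $r_n=\delta_n^{1/3}\lam_K(c_n)$ does not automatically satisfy $\lam_{K-1}(c_n)\ll r_n$, since $\delta_n$ and $\lam_{K-1}(c_n)/\lam_K(c_n)$ are a priori unrelated; the paper's implicit choice $\lam_{K-1}(c_n)+(d_n-c_n)^{1/2}\ll r_n\ll\lam_K(c_n)$ avoids this. Second, the ``main obstacle'' you flag is not really one: in the blow-up case $R_n\ll\mu_{K+1}(c_n)\ll(T_+-c_n)^{1/2}\to 0$, so $\|u^*\|_{\E(r_n/2,2R_n)}\to 0$ simply because $u^*\in\E$; Lemma~\ref{lem:un-seq} then applies directly to the solution $u$ with target $\iota_K W_{\lam_K(c_n)}$, and $u^*$ can be added back afterward at no cost.
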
 

\begin{proof} If not, then there exist $\eta>0$, sequences $\veps_n \to 0$, $[c_n, d_n] \subset [a_n, b_n]$, and $C_n \to \infty$ so that $\bfd(c_n)  \le \veps_n$, $\bfd(d_n) \ge \eta$ and 
\EQ{ \label{eq:short-time} 
(d_n - c_n)^{\frac{1}{2}} \le C_n^{-1} \lam_K(c_n).
} 
We will show that $[c_n,d_n]\in \calC_{K-1}(\ti\veps_n,\eta)$, where $\veps_n\leq \ti\veps_n \to 0$ is another sequence as usual, hence contradicting the minimality of $K$. We already know that $\bfd(c_n)\leq \veps_n$ and $\bfd(d_n)\geq \eta$. Thus it suffices to find $r_n>0$ such that $\lim_{n\to \infty}\sup_{t\in [c_n,d_n]} \bfd_{K-1}(t;r_n)=0$ as $n\to \infty.$

First, using~\eqref{eq:lambda'-bound}
\EQ{ \label{eq:lambda'-dn} 
\abs{\lambda_j(t)^2 - \lambda_j(c_n)^2} \le C_0 (t- c_n)
}
for all $t \in [c_n, d_n]$ and all $j=1,\cdots,N.$ Hence, using the contradiction assumption~\eqref{eq:short-time}  we can ensure that for large enough $n$,  
\EQ{
\frac{3}{4} \le  \frac{ \lam_j(t)}{\lam_j(c_n)}   \le \frac{5}{4} 
}
for all $j = K, \dots, N$ and all $t \in [c_n, d_n]$.  Since $\bfd(c_n) \to 0$ we have
\EQ{ \label{eq:lamK0} 
\lim_{n \to \infty} \sup_{t \in [c_n, d_n]} \sum_{j=K}^{N} \Big( \frac{ \lam_j(t)}{\lam_{j+1}(t)} \Big)^{\frac{D-2}{2}}  = 0
} 
and furthermore there exists a sequence $(r_n)$ such that   
\EQ{ \label{eq:rn-choice} 
\lam_{K-1}(c_n) + (d_n - c_n)^{\frac{1}{2}} \ll r_n \ll \lam_K(c_n) \mand \lim_{n \to \infty} \Tilde{E}( u(c_n); \frac{1}{8} r_n, 8 r_n)  = 0.
}
By the definition of the scale $\nu(t)$ in Lemma \ref{lem:basic-modulation}, we know that $u(t)$ is $o_n(1)$ close to $(N-K)$-exterior bubbles in the region $[\nu(t),\infty)$ as $n\to \infty.$ 

Now, using $\sup_{t\in [c_n,d_n]}\nu(t)|\nu'(t)|=o_n(1)$ as $n\to \infty$ and the fact that $\lam_K(c_n) \ll \nu(c_n)$, which follows from \eqref{eqn:tilde-nu-lam_j},  we get 
\EQ{\label{eqn:nu-does-not-move}
\sup_{t\in [c_n,d_n]} \left|\frac{\nu(t)}{\nu(c_n)}-1\right| = o_n(1)
}
as $n\to \infty.$ Combining this with the exterior energy control \eqref{eqn:exterior-bubble} yields
\EQ{
\sup_{t\in [c_n,d_n]} &\left(\|u(t)-u^*-\calW({\iota}_{K+1},\ldots,{\iota}_{N}, \lam_{K+1}(t),\ldots,\lam_{N}(t))\|_{\calE(2\nu(c_n),\infty)}^2 + \left(\frac{\nu(t)}{\lam_{K+1}}\right)^{\frac{D-2}{2}} \right. \\
&\quad \left. + \sum_{j=K+1}^N \left(\frac{\lam_j(t)}{\lam_{j+1}(t)}\right)^{\frac{D-2}{2}}\right) = o_n(1)
}
as $n\to \infty$. Note that we also have $\lam_K(t) \ll \nu(t)$ and by definition $\nu(t)\ll \lam_{K+1}(t)$ for all $t\in [c_n,d_n].$ On the other hand, using \eqref{eq:d-g-lam} with $\bfd(c_n) =o_n(1)$ as $n\to \infty$ and Lemma \ref{lem:basic-modulation} we have
\EQ{
\|u(c_n)-\iota_{n,K}W_{\lam_K(c_n)}\|_{\calE(\frac{r_n}{2},4\nu(c_n))} = o_n(1)
}
as $n\to \infty.$ Applying Lemma \ref{lem:un-seq} with $t_n=d_n-c_n$ we get that
\EQ{
\sup_{t\in [c_n,d_n]}\|u(t)-\iota_{n,K}W_{\lam_K(c_n)}\|_{\calE(r_n,2\nu(c_n))} = o_n(1)
}
as $n\to \infty.$ Combining the previous observations, we get 
\EQ{
\sup_{t\in [c_n,d_n]}&\left(\|u(t)-u^*-\iota_{n,K}W_{\lam_K(c_n)}-\calW({\iota}_{K+1},\ldots,{\iota}_{N}, \lam_{K+1}(t),\ldots,\lam_{N}(t))\|_{\calE(r_n,\infty)}^2 +\left(\frac{r_n}{\lam_K(c_n)}\right)^{\frac{D-2}{2}} \right. \\
&\quad \left. + \left(\frac{\lam_K(c_n)}{\lam_{K+1}(t)}\right)^{\frac{D-2}{2}}+ \sum_{j=K+1}^N \left(\frac{\lam_j(t)}{\lam_{j+1}(t)}\right)^{\frac{D-2}{2}}\right) = o_n(1)
}
as $n\to \infty.$ Thus, 
\EQ{
\sup_{t\in [c_n,d_n]}\bfd_{K-1}(t;r_n) = o_n(1)
}
as $n\to \infty$ which contradicts the minimality of $K.$ 

\end{proof} 

\begin{lem}\label{lem:cndn} For any $\eta>0$ as in Lemma~\ref{lem:collision-duration}, $\veps_n \to 0$ be some sequence, and let $[a_n, b_n] \in \calC_K(\veps_n, \eta)$. Then, there exist $\veps \in (0, \eta)$,  $n_0 \in \N$,  and $[c_n,  d_n] \subset (a_n, b_n)$ such that for all $n \ge n_0$, we have 
\EQ{ \label{eq:d>eps} 
\bfd(t) \ge \veps, \quad \forall \, \, t \in [c_n, d_n], 
}
\EQ{\label{eq:dn-cn} 
d_n - c_n = \frac{1}{n} \lam_K(c_n)^2, 
}
and 
\EQ{ \label{eq:lamKcn} 
\frac{1}{2} \lam_K(c_n) \le \lam_K(t) \le 2\lam_K(c_n) \quad \forall\, \, t \in [c_n, d_n]. 
} 
\end{lem}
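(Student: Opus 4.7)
The strategy is to choose $c_n$ as the \emph{last} time in $[a_n,b_n]$ where the proximity function $\bfd$ is still at the threshold $\eps$ supplied by Lemma~\ref{lem:collision-duration}, and then take $d_n := c_n + \tfrac{1}{n}\lam_K(c_n)^2$. Conditions~\eqref{eq:d>eps} and~\eqref{eq:dn-cn} will then follow from this selection together with the lower bound on collision durations already established in Lemma~\ref{lem:collision-duration}, while~\eqref{eq:lamKcn} comes from the modulation estimate~\eqref{eq:lambda'-bound}.

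\emph{Step 1: Selection of $c_n$.} Let $\eps\in(0,\eta)$ and $C_u>0$ be the constants produced by Lemma~\ref{lem:collision-duration} applied to the given $\eta$. Since $\eps_n\to 0$, for all $n$ sufficiently large we have $\eps_n<\eps$, so that $\bfd(a_n)=\eps_n<\eps<\eta=\bfd(b_n)$. Using the continuity of $\bfd$ on the sub-interval where $\bfd\le\eta_0$ (which follows from the modulation decomposition in Lemma~\ref{lem:modulation}), I would set
\begin{equation*}
c_n := \sup\big\{\, t\in[a_n,b_n]\,:\,\bfd(t)\le\eps\,\big\}.
\end{equation*}
By continuity, $\bfd(c_n)=\eps$, and by definition of the supremum $\bfd(t)\ge\eps$ for all $t\in[c_n,b_n]$, which will immediately yield~\eqref{eq:d>eps} once $d_n$ is shown to lie in $[c_n,b_n]$.

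\emph{Step 2: Length of the sub-interval.} Apply Lemma~\ref{lem:collision-duration} to the sub-interval $[c_n,b_n]$ with $\bfd(c_n)=\eps$ and $\bfd(b_n)=\eta$, obtaining $b_n-c_n \ge C_u^{-2}\lam_K(c_n)^2$. Defining $d_n := c_n+\tfrac{1}{n}\lam_K(c_n)^2$, this ensures $d_n\in(c_n,b_n]$ whenever $n\ge n_0:=\lceil C_u^{2}\rceil$, thereby securing~\eqref{eq:dn-cn}.

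\emph{Step 3: Scale comparability.} For~\eqref{eq:lamKcn}, I use the dynamical bound~\eqref{eq:lambda'-bound}: since $\bfd(t)\le\eta$ for all $t\in[c_n,d_n]\subset[a_n,b_n]$, we have $|\lam_K'(t)|\le C_0\eta/\lam_K(t)$, hence $\big|\tfrac{\ud}{\ud t}\lam_K(t)^2\big|\le 2C_0\eta$. Integrating on $[c_n,t]$ for $t\in[c_n,d_n]$ gives
\begin{equation*}
\big|\lam_K(t)^2-\lam_K(c_n)^2\big|\le 2C_0\eta\,(t-c_n)\le \frac{2C_0\eta}{n}\lam_K(c_n)^2,
\end{equation*}
which is bounded by $\tfrac{3}{4}\lam_K(c_n)^2$ after possibly enlarging $n_0$, yielding $\tfrac{1}{4}\lam_K(c_n)^2\le \lam_K(t)^2\le \tfrac{7}{4}\lam_K(c_n)^2$ and hence~\eqref{eq:lamKcn}. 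There is no serious obstacle here: once Lemma~\ref{lem:collision-duration} is in hand, the plan is essentially a three-line selection argument, and the only minor technical point is justifying the continuity of $\bfd$ used to characterize $c_n$ as a level set via the intermediate value theorem.
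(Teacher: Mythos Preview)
Your argument is correct and is exactly the selection argument used in the reference the paper cites (Lemma~6.2 of \cite{lawrie-harmonic-map}): take $c_n$ to be the last time in $[a_n,b_n]$ with $\bfd=\eps$, invoke Lemma~\ref{lem:collision-duration} to guarantee $b_n-c_n\ge C_u^{-2}\lam_K(c_n)^2$, set $d_n=c_n+\tfrac{1}{n}\lam_K(c_n)^2$, and control $\lam_K$ on $[c_n,d_n]$ via~\eqref{eq:lambda'-bound}. The continuity of $\bfd$ you flag is not an issue: for fixed parameters the map $u\mapsto(\|u-u^*-\calW(\vec\iota,\vec\lam)\|_{\cE}^2+c)^{1/2}$ is $1$-Lipschitz in $\cE$, so the infimum is $1$-Lipschitz in $u$, and $t\mapsto u(t)$ is continuous in $\cE$ by local well-posedness (the explicit $t$-dependence through $\lam_{N+1}$ is manifestly continuous).
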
 
\begin{proof} 
We follow the argument outlined in the proof of Lemma 6.2 in \cite{lawrie-harmonic-map}. Let $\veps>0$ so that Lemma~\ref{lem:collision-duration} holds and define $c_n:= \sup\{t \in [a_n, b_n] \mid \bfd(t) \le \veps\}$. Then $\bfd(c_n) = \veps$ and by Lemma~\ref{lem:collision-duration} we have 
\EQ{
b_n - c_n \ge C_u^{-1} \lam_K(c_n). 
}
Set $d_n:= c_n + \frac{1}{n} \lam_K(c_n)^2$ and for $n$ sufficiently large we have $d_n < b_n$. Then by~\eqref{eq:lambda'-bound} we have, 
\EQ{
\Big| \frac{ \lam_K(t)^2}{\lam_K(c_n)^2} - 1 \Big| \lesssim \frac{ d_n- c_n}{\lam_K(c_n)}  = \frac{1}{n}
}
which implies ~\eqref{eq:lamKcn}.
\end{proof} 
\begin{proof}[Proof of Theorem~\ref{thm:main}] We proceed by a contradiction argument. Suppose that Theorem~\ref{thm:main} is false. Then let $[a_n, b_n] \in \calC_K(\veps_n, \eta)$ be a sequence of disjoint collision intervals as in Lemma ~\ref{lem:basic-modulation}, and let $\eta>0$ be sufficiently small such that Lemma~\ref{lem:collision-duration} and Lemma~\ref{lem:delta-to-d} hold. Let $\veps>0$, $n_0$, and $[c_n, d_n]$ be as in Lemma~\ref{lem:cndn}. 

We first show that there exists a constant $c_0>0$ such that for every $n \ge n_0$, 
\EQ{ \label{eq:tension-lower} 
\inf_{t \in [c_n, d_n]} \lam_K(t)^2 \| \p_t u(t) \|_{L^2}^2 \ge c_0. 
}
If not, we can find a sequence $s_n \in [c_n, d_n]$ (after possibly passing to a subsequence) such that
\EQ{
\lim_{n \to \infty} \lam_K(s_n) \| \p_t u(s_n) \|_{L^2}  = 0.
}
However, then Lemma~\ref{lem:compactness} yields a sequence $r_n \to \infty$ such that, after passing to a further subsequence,  
\EQ{
\lim_{n \to \infty}  \delta_{r_n \lam_K(s_n)} ( u(s_n)) = 0.
}
Now denote $\mu_n=r_n \lam_K(s_n)$. Then, since $\nu(s_n)=\nu_n \lam_{K+1}(s_n)$ if $\mu_n \leq \nu(s_n)$, by Lemma \ref{lem:basic-modulation} $\|u(s_n)\|_{\calE(\mu_n,\nu(s_n))}\leq \eta_0$ since
\EQ{
\|u(s_n)\|_{\calE(\mu_n,\nu(s_n))} \leq  \|g(s_n)\|_{\calE(\mu_n,\nu(s_n))} + \|\calW(\vec{\io},\vec\lam)\|_{\calE(\mu_n,\nu(s_n))} \leq C_0\eta \leq \eta_0,
}
since when $T_+<\infty$, $u^*$ is negligible in the region $(\mu_n,\nu(s_n))$ and we can assume that $\eta>0$ is small enough. Thus, applying Lemma~\ref{lem:delta-to-d} implies that
\EQ{
\lim_{n \to \infty} \bfd(s_n) = 0
}
contradicting~\eqref{eq:d>eps}. As a consequence~\eqref{eq:tension-lower} holds. Therefore, using~\eqref{eq:tension-lower},~\eqref{eq:lamKcn}, and~\eqref{eq:dn-cn} we have 
\EQ{
\sum_{n \ge n_0} \int_{c_n}^{d_n} \| \p_t u(t) \|_{L^2}^2 \, \ud t  \ge \frac{c_0}{4} \sum_{n \ge n_0} \int_{c_n}^{d_n} \lam_K(c_n)^{-2} \,  \ud t \ge \frac{c_0}{4} \sum_{n \ge n_0} n^{-1} = \infty.
}
On the other hand,  by~\eqref{eq:tension-L2} and the fact that the $[c_n, d_n]$ are disjoint we have
\EQ{
\sum_{n \ge n_0} \int_{c_n}^{d_n} \| \p_t u(t) \|_{L^2}^2 \, \ud t  \le \int_0^{T_+} \| \p_t u(t) \|_{L^2}^2 \, \ud t < \infty, 
}
which is a contradiction. 
\end{proof}

\bibliographystyle{alpha}
\bibliography{refs}

@article {roxanas,
    AUTHOR = {Gustafson, Stephen and Roxanas, Dimitrios},
     TITLE = {Global, decaying solutions of a focusing energy-critical heat
              equation in {$\Bbb R^4$}},
   JOURNAL = {J. Differential Equations},
  FJOURNAL = {Journal of Differential Equations},
    VOLUME = {264},
      YEAR = {2018},
    NUMBER = {9},
     PAGES = {5894--5927},
      ISSN = {0022-0396},
   MRCLASS = {35K91 (35B40 35B65 35K15)},
  MRNUMBER = {3765769},
MRREVIEWER = {Luca Lorenzi},
       DOI = {10.1016/j.jde.2018.01.023},
       URL = {https://doi.org/10.1016/j.jde.2018.01.023},
}

@article{ikeda2025globaldynamicsenergycriticalnonlinear,
  title={Global dynamics for the energy-critical nonlinear heat equation},
  author={Ikeda, Masahiro and Niche, C{\'e}sar J and Planas, Gabriela},
  journal={arXiv preprint arXiv:2412.04238},
  year={2024}
}

@article {collot,
    AUTHOR = {Collot, Charles and Merle, Frank and Rapha\"{e}l, Pierre},
     TITLE = {Dynamics near the ground state for the energy critical
              nonlinear heat equation in large dimensions},
   JOURNAL = {Comm. Math. Phys.},
  FJOURNAL = {Communications in Mathematical Physics},
    VOLUME = {352},
      YEAR = {2017},
    NUMBER = {1},
     PAGES = {215--285},
      ISSN = {0010-3616},
   MRCLASS = {35K91 (35B33)},
  MRNUMBER = {3623259},
MRREVIEWER = {Christian Stinner},
       DOI = {10.1007/s00220-016-2795-4},
       URL = {https://doi.org/10.1007/s00220-016-2795-4},
}

@article {kenig-merle,
    AUTHOR = {Kenig, Carlos E. and Merle, Frank},
     TITLE = {Global well-posedness, scattering and blow-up for the
              energy-critical, focusing, non-linear {S}chr\"{o}dinger equation
              in the radial case},
   JOURNAL = {Invent. Math.},
  FJOURNAL = {Inventiones Mathematicae},
    VOLUME = {166},
      YEAR = {2006},
    NUMBER = {3},
     PAGES = {645--675},
      ISSN = {0020-9910},
   MRCLASS = {35Q55 (35B30 35B40 35P25)},
  MRNUMBER = {2257393},
MRREVIEWER = {Justin A. Holmer},
       DOI = {10.1007/s00222-006-0011-4},
       URL = {https://doi.org/10.1007/s00222-006-0011-4},
}

@article {topping,
    AUTHOR = {Topping, Peter},
     TITLE = {Rigidity in the harmonic map heat flow},
   JOURNAL = {J. Differential Geom.},
  FJOURNAL = {Journal of Differential Geometry},
    VOLUME = {45},
      YEAR = {1997},
    NUMBER = {3},
     PAGES = {593--610},
      ISSN = {0022-040X,1945-743X},
   MRCLASS = {58E20 (58G11)},
  MRNUMBER = {1472890},
MRREVIEWER = {Joseph\ F.\ Grotowski},
       URL = {http://projecteuclid.org/euclid.jdg/1214459844},
}

@article {jia-kenig,
    AUTHOR = {Jia, Hao and Kenig, Carlos},
     TITLE = {Asymptotic decomposition for semilinear wave and equivariant
              wave map equations},
   JOURNAL = {Amer. J. Math.},
  FJOURNAL = {American Journal of Mathematics},
    VOLUME = {139},
      YEAR = {2017},
    NUMBER = {6},
     PAGES = {1521--1603},
      ISSN = {0002-9327,1080-6377},
   MRCLASS = {35L71 (35B40 35B44 35C08 35L15)},
  MRNUMBER = {3730929},
MRREVIEWER = {Jean-Marc\ Delort},
       DOI = {10.1353/ajm.2017.0039},
       URL = {https://doi.org/10.1353/ajm.2017.0039},
}

@article{pasta,
    AUTHOR = {E. Fermi and J. Pasta and S. Ulam},
     TITLE = {Los Alamos Report LA-1940},
      YEAR = {1955},
}

@article {collot2022soliton,
    AUTHOR = {Collot, Charles and Duyckaerts, Thomas and Kenig, Carlos and
              Merle, Frank},
     TITLE = {Soliton resolution for the radial quadratic wave equation in
              space dimension 6},
   JOURNAL = {Vietnam J. Math.},
  FJOURNAL = {Vietnam Journal of Mathematics},
    VOLUME = {52},
      YEAR = {2024},
    NUMBER = {3},
     PAGES = {735--773},
      ISSN = {2305-221X,2305-2228},
   MRCLASS = {35L71 (35B40 35B44 35L15)},
  MRNUMBER = {4759365},
       DOI = {10.1007/s10013-023-00673-9},
       URL = {https://doi.org/10.1007/s10013-023-00673-9},
}

@article{wang2021refined,
  title={Refined blowup analysis and nonexistence of type II blowups for an energy critical nonlinear heat equation},
  author={Wang, Kelei and Wei, Juncheng},
  journal={arXiv preprint arXiv:2101.07186},
  year={2021}
}

@article {kim2025classification,
    AUTHOR = {Kim, Kihyun and Merle, Frank},
     TITLE = {On classification of global dynamics for energy-critical
              equivariant harmonic map heat flows and radial nonlinear heat
              equation},
   JOURNAL = {Comm. Pure Appl. Math.},
  FJOURNAL = {Communications on Pure and Applied Mathematics},
    VOLUME = {78},
      YEAR = {2025},
    NUMBER = {9},
     PAGES = {1783--1842},
      ISSN = {0010-3640},
   MRCLASS = {35K55 (35A15)},
  MRNUMBER = {4932382},
       DOI = {10.1002/cpa.22253},
       URL = {https://doi.org/10.1002/cpa.22253},
}

@article{lawrie-viet,
   title={Localized Sequential Bubbling for the Radial Energy Critical Semilinear Heat Equation},
   ISSN={2305-2228},
   url={https://doi.org/10.1007/s10013-023-00648-w},
   DOI={10.1007/s10013-023-00648-w},
   journal={Vietnam Journal of Mathematics},
   author={Lawrie, Andrew},
   year={2023}}

@article {duShi,
    AUTHOR = {Du, Shi-Zhong},
     TITLE = {On partial regularity of the borderline solution of semilinear
              parabolic equation with critical growth},
   JOURNAL = {Adv. Differential Equations},
  FJOURNAL = {Advances in Differential Equations},
    VOLUME = {18},
      YEAR = {2013},
    NUMBER = {1-2},
     PAGES = {147--177},
      ISSN = {1079-9389},
   MRCLASS = {35K91 (35B65 35K20)},
  MRNUMBER = {3052713},
MRREVIEWER = {Tilak\ Bhattacharya},
}

@article {DM08,
    AUTHOR = {Duyckaerts, Thomas and Merle, Frank},
     TITLE = {Dynamics of threshold solutions for energy-critical wave
              equation},
   JOURNAL = {Int. Math. Res. Pap. IMRP},
  FJOURNAL = {International Mathematics Research Papers. IMRP},
      YEAR = {2008},
     PAGES = {Art ID rpn002, 67},
      ISSN = {1687-3017,1687-3009},
   MRCLASS = {35L70 (35L15)},
  MRNUMBER = {2470571},
MRREVIEWER = {Enzo\ Vitillaro},
       DOI = {10.1093/imrp/rpn002},
       URL = {https://doi.org/10.1093/imrp/rpn002},
}

@article {Duyckaerts-wave:d=3,
    AUTHOR = {Duyckaerts, Thomas and Kenig, Carlos and Merle, Frank},
     TITLE = {Profiles of bounded radial solutions of the focusing,
              energy-critical wave equation},
   JOURNAL = {Geom. Funct. Anal.},
  FJOURNAL = {Geometric and Functional Analysis},
    VOLUME = {22},
      YEAR = {2012},
    NUMBER = {3},
     PAGES = {639--698},
      ISSN = {1016-443X,1420-8970},
   MRCLASS = {35L71 (35B07 35B40 35L15)},
  MRNUMBER = {2972605},
MRREVIEWER = {Boris\ Ettinger},
       DOI = {10.1007/s00039-012-0174-7},
       URL = {https://doi.org/10.1007/s00039-012-0174-7},
}

@article{del_Pino_2021,
   title={Existence and stability of infinite time bubble towers in the energy critical heat equation},
   volume={14},
   ISSN={2157-5045},
   url={http://dx.doi.org/10.2140/apde.2021.14.1557},
   DOI={10.2140/apde.2021.14.1557},
   number={5},
   journal={Analysis \& PDE},
   publisher={Mathematical Sciences Publishers},
   author={del Pino, Manuel and Musso, Monica and Wei, Juncheng},
   year={2021},
   month=aug, pages={1557–1598} }

@article {collot-merle,
    AUTHOR = {Collot, Charles and Merle, Frank and Rapha\"{e}l, Pierre},
     TITLE = {Dynamics near the ground state for the energy critical
              nonlinear heat equation in large dimensions},
   JOURNAL = {Comm. Math. Phys.},
  FJOURNAL = {Communications in Mathematical Physics},
    VOLUME = {352},
      YEAR = {2017},
    NUMBER = {1},
     PAGES = {215--285},
      ISSN = {0010-3616,1432-0916},
   MRCLASS = {35K91 (35B33)},
  MRNUMBER = {3623259},
MRREVIEWER = {Christian\ Stinner},
       DOI = {10.1007/s00220-016-2795-4},
       URL = {https://doi.org/10.1007/s00220-016-2795-4},
}

@article {Filippas,
    AUTHOR = {Filippas, Stathis and Herrero, Miguel A. and Vel\'{a}zquez,
              Juan J. L.},
     TITLE = {Fast blow-up mechanisms for sign-changing solutions of a
              semilinear parabolic equation with critical nonlinearity},
   JOURNAL = {R. Soc. Lond. Proc. Ser. A Math. Phys. Eng. Sci.},
  FJOURNAL = {The Royal Society of London. Proceedings. Series A.
              Mathematical, Physical and Engineering Sciences},
    VOLUME = {456},
      YEAR = {2000},
    NUMBER = {2004},
     PAGES = {2957--2982},
      ISSN = {1364-5021,1471-2946},
   MRCLASS = {35K55 (35B33 35B40 35C20)},
  MRNUMBER = {1843848},
MRREVIEWER = {Juli\'{a}n\ Aguirre},
       DOI = {10.1098/rspa.2000.0648},
       URL = {https://doi.org/10.1098/rspa.2000.0648},
}

@article {giga-2,
    AUTHOR = {Giga, Yoshikazu and Matsui, Shin'ya and Sasayama, Satoshi},
     TITLE = {Blow up rate for semilinear heat equations with subcritical
              nonlinearity},
   JOURNAL = {Indiana Univ. Math. J.},
  FJOURNAL = {Indiana University Mathematics Journal},
    VOLUME = {53},
      YEAR = {2004},
    NUMBER = {2},
     PAGES = {483--514},
      ISSN = {0022-2518,1943-5258},
   MRCLASS = {35K55 (35B40 35K15)},
  MRNUMBER = {2060042},
MRREVIEWER = {Julio\ D.\ Rossi},
       DOI = {10.1512/iumj.2004.53.2401},
       URL = {https://doi.org/10.1512/iumj.2004.53.2401},
}

@article {giga-1,
    AUTHOR = {Giga, Yoshikazu and Kohn, Robert V.},
     TITLE = {Characterizing blowup using similarity variables},
   JOURNAL = {Indiana Univ. Math. J.},
  FJOURNAL = {Indiana University Mathematics Journal},
    VOLUME = {36},
      YEAR = {1987},
    NUMBER = {1},
     PAGES = {1--40},
      ISSN = {0022-2518,1943-5258},
   MRCLASS = {35B40},
  MRNUMBER = {876989},
MRREVIEWER = {J.\ W.\ Bebernes},
       DOI = {10.1512/iumj.1987.36.36001},
       URL = {https://doi.org/10.1512/iumj.1987.36.36001},
}

@book {quitter,
    AUTHOR = {Quittner, Pavol and Souplet, Philippe},
     TITLE = {Superlinear parabolic problems},
    SERIES = {Birkh\"{a}user Advanced Texts: Basler Lehrb\"{u}cher.
              [Birkh\"{a}user Advanced Texts: Basel Textbooks]},
   EDITION = {Second},
      NOTE = {Blow-up, global existence and steady states},
 PUBLISHER = {Birkh\"{a}user/Springer, Cham},
      YEAR = {2019},
     PAGES = {xvi+725},
      ISBN = {978-3-030-18220-5; 978-3-030-18222-9},
   MRCLASS = {35-02 (35B44 35J57 35J60 35K51 35K55)},
  MRNUMBER = {3967048},
       DOI = {10.1007/978-3-030-18222-9},
       URL = {https://doi.org/10.1007/978-3-030-18222-9},
}

@article{matano2004nonexistence,
  title={On nonexistence of type II blowup for a supercritical nonlinear heat equation},
  author={Matano, Hiroshi and Merle, Frank},
  journal={Communications on Pure and Applied Mathematics: A Journal Issued by the Courant Institute of Mathematical Sciences},
  volume={57},
  number={11},
  pages={1494--1541},
  year={2004},
  publisher={Wiley Online Library}
}

@article{oh-chern-simons,
  title={Soliton resolution for equivariant self-dual Chern-Simons-Schrödinger equation in weighted Sobolev class},
  author={Kim, Kihyun and Kwon, Soonsik and Oh, Sung-Jin},
  journal={arXiv preprint arXiv:2202.07314},
  year={2022}
}

@article{lawrie-wavemaps,
    AUTHOR = {Jacek Jendrej and Andrew Lawrie},
     TITLE = {Soliton resolution for energy-critical wave maps in the equivariant case},
   JOURNAL = {J. Amer. Math. Soc.},
  FJOURNAL = {Journal of the American Mathematical Society},
      YEAR = {2022},
   MRCLASS = {35L71 (35L15)},
       DOI = {https://doi.org/10.1090/jams/1012},
}

@inproceedings{lawrie-harmonic-map-nonradial,
  title={Continuous in time bubble decomposition for the harmonic map heat flow},
  author={Jendrej, Jacek and Lawrie, Andrew and Schlag, Wilhelm},
  booktitle={Forum of Mathematics, Pi},
  volume={13},
  pages={e4},
  year={2025},
  organization={Cambridge University Press}
}

@article {lawrie-wave,
    AUTHOR = {Jendrej, Jacek and Lawrie, Andrew},
     TITLE = {Soliton resolution for the energy-critical nonlinear wave
              equation in the radial case},
   JOURNAL = {Ann. PDE},
  FJOURNAL = {Annals of PDE. Journal Dedicated to the Analysis of Problems
              from Physical Sciences},
    VOLUME = {9},
      YEAR = {2023},
    NUMBER = {2},
     PAGES = {Paper No. 18, 117},
      ISSN = {2524-5317,2199-2576},
   MRCLASS = {35L71 (35B40 37K40)},
  MRNUMBER = {4650926},
       DOI = {10.1007/s40818-023-00159-4},
       URL = {https://doi.org/10.1007/s40818-023-00159-4},
}

@article{collotdescription,
  title={DESCRIPTION OF SINGULAR SOLUTIONS TO THE PRANDTL’S EQUATIONS AND THE SEMI-LINEAR HEAT EQUATION: A BRIEF INTRODUCTION},
  author={Collot, Charles},
   url={https://www.normalesup.org/~collot/courseUSTC.pdf}
}

@article {lawrie-harmonic-map,
    AUTHOR = {Jendrej, Jacek and Lawrie, Andrew},
     TITLE = {Bubble decomposition for the harmonic map heat flow in the
              equivariant case},
   JOURNAL = {Calc. Var. Partial Differential Equations},
  FJOURNAL = {Calculus of Variations and Partial Differential Equations},
    VOLUME = {62},
      YEAR = {2023},
    NUMBER = {9},
     PAGES = {Paper No. 264, 36},
      ISSN = {0944-2669,1432-0835},
   MRCLASS = {35L71 (35B40 37K40)},
  MRNUMBER = {4662424},
       DOI = {10.1007/s00526-023-02597-1},
       URL = {https://doi.org/10.1007/s00526-023-02597-1},
}

@article {Duyckaerts-wave-6,
    AUTHOR = {Duyckaerts, Thomas and Kenig, Carlos and Martel, Yvan and
              Merle, Frank},
     TITLE = {Soliton resolution for critical co-rotational wave maps and
              radial cubic wave equation},
   JOURNAL = {Comm. Math. Phys.},
  FJOURNAL = {Communications in Mathematical Physics},
    VOLUME = {391},
      YEAR = {2022},
    NUMBER = {2},
     PAGES = {779--871},
      ISSN = {0010-3616,1432-0916},
   MRCLASS = {35L05 (35C08)},
  MRNUMBER = {4397184},
       DOI = {10.1007/s00220-022-04330-z},
       URL = {https://doi.org/10.1007/s00220-022-04330-z},
}

@article {Duyckaerts-wave-5,
    AUTHOR = {Duyckaerts, Thomas and Kenig, Carlos and Merle, Frank},
     TITLE = {Soliton resolution for the radial critical wave equation in
              all odd space dimensions},
   JOURNAL = {Acta Math.},
  FJOURNAL = {Acta Mathematica},
    VOLUME = {230},
      YEAR = {2023},
    NUMBER = {1},
     PAGES = {1--92},
      ISSN = {0001-5962,1871-2509},
   MRCLASS = {35Q35 (35R11)},
  MRNUMBER = {4567713},
       DOI = {10.4310/acta.2023.v230.n1.a1},
       URL = {https://doi.org/10.4310/acta.2023.v230.n1.a1},
}

@article {Duyckaerts-wave-2,
    AUTHOR = {Duyckaerts, Thomas and Kenig, Carlos and Merle, Frank},
     TITLE = {Classification of radial solutions of the focusing,
              energy-critical wave equation},
   JOURNAL = {Camb. J. Math.},
  FJOURNAL = {Cambridge Journal of Mathematics},
    VOLUME = {1},
      YEAR = {2013},
    NUMBER = {1},
     PAGES = {75--144},
      ISSN = {2168-0930,2168-0949},
   MRCLASS = {35L71 (35B07 35B30 35B40 35B44 35C08)},
  MRNUMBER = {3272053},
MRREVIEWER = {Bruno\ Scheurer},
       DOI = {10.4310/CJM.2013.v1.n1.a3},
       URL = {https://doi.org/10.4310/CJM.2013.v1.n1.a3},
}

@article {topping-annals,
    AUTHOR = {Topping, Peter},
     TITLE = {Repulsion and quantization in almost-harmonic maps, and
              asymptotics of the harmonic map flow},
   JOURNAL = {Ann. of Math. (2)},
  FJOURNAL = {Annals of Mathematics. Second Series},
    VOLUME = {159},
      YEAR = {2004},
    NUMBER = {2},
     PAGES = {465--534},
      ISSN = {0003-486X,1939-8980},
   MRCLASS = {58E20 (53C44)},
  MRNUMBER = {2081434},
MRREVIEWER = {James\ Alexander\ McCoy},
       DOI = {10.4007/annals.2004.159.465},
       URL = {https://doi.org/10.4007/annals.2004.159.465},
}

@article {struwe,
    AUTHOR = {Struwe, Michael},
     TITLE = {On the evolution of harmonic mappings of {R}iemannian
              surfaces},
   JOURNAL = {Comment. Math. Helv.},
  FJOURNAL = {Commentarii Mathematici Helvetici},
    VOLUME = {60},
      YEAR = {1985},
    NUMBER = {4},
     PAGES = {558--581},
      ISSN = {0010-2571,1420-8946},
   MRCLASS = {58E20 (53C42)},
  MRNUMBER = {826871},
MRREVIEWER = {S.\ M.\ Salamon},
       DOI = {10.1007/BF02567432},
       URL = {https://doi.org/10.1007/BF02567432},
}

@article {qing,
    AUTHOR = {Qing, Jie},
     TITLE = {On singularities of the heat flow for harmonic maps from
              surfaces into spheres},
   JOURNAL = {Comm. Anal. Geom.},
  FJOURNAL = {Communications in Analysis and Geometry},
    VOLUME = {3},
      YEAR = {1995},
    NUMBER = {1-2},
     PAGES = {297--315},
      ISSN = {1019-8385,1944-9992},
   MRCLASS = {58G11 (35K55 58E20)},
  MRNUMBER = {1362654},
MRREVIEWER = {Joseph\ F.\ Grotowski},
       DOI = {10.4310/CAG.1995.v3.n2.a4},
       URL = {https://doi.org/10.4310/CAG.1995.v3.n2.a4},
}

@article {qing-tian,
    AUTHOR = {Qing, Jie and Tian, Gang},
     TITLE = {Bubbling of the heat flows for harmonic maps from surfaces},
   JOURNAL = {Comm. Pure Appl. Math.},
  FJOURNAL = {Communications on Pure and Applied Mathematics},
    VOLUME = {50},
      YEAR = {1997},
    NUMBER = {4},
     PAGES = {295--310},
      ISSN = {0010-3640,1097-0312},
   MRCLASS = {58E20 (58G11)},
  MRNUMBER = {1438148},
MRREVIEWER = {Daniel\ Pollack},
       DOI = {10.1002/(SICI)1097-0312(199704)50:4<295::AID-CPA1>3.0.CO;2-5},
       URL =
              {https://doi.org/10.1002/(SICI)1097-0312(199704)50:4<295::AID-CPA1>3.0.CO;2-5},
}

@article{zabusky,
  title = {Interaction of Solitons in a Collisionless Plasma and the Recurrence of Initial States},
  author = {Zabusky, N. J. and Kruskal, M. D.},
  journal = {Phys. Rev. Lett.},
  volume = {15},
  issue = {6},
  pages = {240--243},
  numpages = {0},
  year = {1965},
  month = {Aug},
  publisher = {American Physical Society},
  doi = {10.1103/PhysRevLett.15.240},
  url = {https://link.aps.org/doi/10.1103/PhysRevLett.15.240}
}

@article {brezis-cazenave,
    AUTHOR = {Brezis, Ha\"{\i}m and Cazenave, Thierry},
     TITLE = {A nonlinear heat equation with singular initial data},
   JOURNAL = {J. Anal. Math.},
  FJOURNAL = {Journal d'Analyse Math\'{e}matique},
    VOLUME = {68},
      YEAR = {1996},
     PAGES = {277--304},
      ISSN = {0021-7670,1565-8538},
   MRCLASS = {35K55 (35R05)},
  MRNUMBER = {1403259},
MRREVIEWER = {Alan\ V.\ Lair},
       DOI = {10.1007/BF02790212},
       URL = {https://doi.org/10.1007/BF02790212},
}

@article {CGS,
    AUTHOR = {Caffarelli, Luis A. and Gidas, Basilis and Spruck, Joel},
     TITLE = {Asymptotic symmetry and local behavior of semilinear elliptic
              equations with critical {S}obolev growth},
   JOURNAL = {Comm. Pure Appl. Math.},
  FJOURNAL = {Communications on Pure and Applied Mathematics},
    VOLUME = {42},
      YEAR = {1989},
    NUMBER = {3},
     PAGES = {271--297},
      ISSN = {0010-3640,1097-0312},
   MRCLASS = {35J60 (35B40 58G30)},
  MRNUMBER = {982351},
MRREVIEWER = {Robert\ McOwen},
       DOI = {10.1002/cpa.3160420304},
       URL = {https://doi.org/10.1002/cpa.3160420304},
}
\end{document}